\documentclass[11pt]{article}

\usepackage{amsmath}
\usepackage{amssymb}
\usepackage{amsfonts}
\usepackage{amsthm}
\usepackage{array}
\usepackage{bm}
\usepackage[shortlabels]{enumitem}
\usepackage{stmaryrd}
\usepackage{mathtools}

\usepackage{amsxtra}
\usepackage{array}
\usepackage{mathrsfs}
\usepackage{slashed}
\usepackage{xcolor}
\usepackage{tikz-cd}
\usepackage{tkz-euclide} 
\usepackage{pgfplots}

\newcolumntype{C}[1]{>{\centering\hspace{0pt}}p{#1}}
\usepackage[margin=1in]{geometry}

\newcommand{\Z}{\mathbb{Z}}

\newcommand{\R}{\mathbb{R}}
\newcommand{\C}{\mathbb{C}}

\newcommand{\CP}{\mathbb{CP}}

\newcommand{\vol}{\mathrm{vol}}
\newcommand{\Hol}{\mathrm{Hol}}

\newtheorem{thm}{Theorem}[section]
\newtheorem{prop}[thm]{Proposition}
\newtheorem{lem}[thm]{Lemma}
\newtheorem{cor}[thm]{Corollary}

\theoremstyle{definition}
\newtheorem{defn}[thm]{Definition}
\newtheorem{conv}[thm]{Convention}
\newtheorem{example}[thm]{Example}
\newtheorem{rmk}[thm]{Remark}

\usepackage[nottoc]{tocbibind}
\numberwithin{equation}{section}

\usepackage{hyperref}
\hypersetup{colorlinks, linkcolor=blue, citecolor=blue}

\title{Montel's theorem and tautness \\
in calibrated geometry}
\author{Anton Iliashenko, Spiro Karigiannis, Jesse Madnick}
\date{July 2026}

\newcommand{\Addresses}
{{  \bigskip
\noindent	\textsc{Beijing Institute of Mathematical Sciences and Applications} \par\nopagebreak
\noindent	\textsc{Beijing, China} \par\nopagebreak
\noindent	\texttt{antoniliashenko@bimsa.cn} \\

\medskip
\noindent	\textsc{University of Waterloo} \par\nopagebreak
\noindent	\textsc{Waterloo, ON, Canada} \par\nopagebreak
\noindent	\texttt{karigiannis@uwaterloo.ca} \\

\medskip
\noindent	\textsc{Seton Hall University} \par\nopagebreak
\noindent	\textsc{South Orange, NJ, United States} \par\nopagebreak
\noindent	\texttt{jesse.ochs.madnick@gmail.com} \\
}}

\begin{document}

\maketitle

	\begin{abstract} We relate the hyperbolicity of a calibrated manifold $(X, \phi)$ to the analytic properties of the space of Smith immersions $\mathrm{SmIm}(B^k, X)$ from the Poincar\'{e} $k$-ball into $X$.  In particular, we establish the following calibrated analogue of a theorem of Royden: if $X$ is $\phi$-replete, then $R_\phi$- and $K_\phi$-hyperbolicity coincide, and either implies the equicontinuity of $\mathrm{SmIm}(B^k, X)$ with respect to the $\phi$-distance.  This yields a Montel theorem for compact $\phi$-replete calibrated manifolds as an immediate corollary.  Our primary technical tool is a new Schwarz lemma for Smith immersions from $B^k$ into $X$, which is of independent interest.  \\
    \indent In a similar spirit, we also prove a calibrated analogue of Kiernan's theorem to the effect that the $K_\phi$-hyperbolicity of $X$ is almost equivalent to $\mathrm{SmIm}(B^k, X)$ being a normal family.  Finally, we prove that bounded domains in flat euclidean space are $R_\phi$-hyperbolic for any calibration $\phi$, and we investigate the hyperbolicity of products and discrete quotients.
\end{abstract}

  \tableofcontents

\section{Introduction}

\indent \indent Montel's theorem is a cornerstone of complex analysis, providing sufficient conditions for a sequence of holomorphic functions to admit a convergent subsequence.  One version of the result states that if $V \subset \C$ is a bounded domain, then $\Hol(B^2, V)$ is a pre-compact subset of $C^0(B^2, \C)$.  (Here, $B^2 = \{z \in \C \colon |z| < 1\}$ is the unit disk, $\mathrm{Hol}(B^2, V)$ is the space of holomorphic functions $B^2 \to V$, and $C^0(B^2, \C)$ carries the compact open topology.)  This result has deep applications in complex analysis --- including the Riemann mapping theorem and Big Picard Theorem --- and is a crucial tool in complex dynamics~\cite{Abate}. \\ 
\indent As befits such an important compactness result, Montel's theorem has been extended in a wide variety of ways.  For example, its analogue for compact Riemann surfaces reveals a striking connection with geometry:

\begin{thm}[Montel for compact Riemann surfaces] \label{thm:MontelBasic}
 Let $X$ be a compact Riemann surface.  Then $X$ is hyperbolic if and only if $\Hol(B^2, X)$ is a pre-compact subset of $C^0(B^2, X)$.  (In fact, it is a compact subset.)
 \end{thm}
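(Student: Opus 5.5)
The plan is to prove the two directions separately, using the uniformization theorem throughout. Since a compact Riemann surface $X$ is (Kobayashi) hyperbolic exactly when its universal cover is the disk, i.e.\ when its genus is at least $2$, I take ``hyperbolic'' to mean this.

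\emph{Hyperbolic $\Rightarrow$ pre-compact.} Assume $X$ is hyperbolic. Equip $B^2$ with its Poincar\'e metric and $X$ with the complete metric of curvature $-1$ descending from the disk. The Schwarz--Pick lemma, lifted through the covering $B^2 \to X$ (any holomorphic $f\colon B^2\to X$ lifts since $B^2$ is simply connected), gives
\[ d_X(f(z),f(w)) \le \rho(z,w) \quad \text{for all } f \in \Hol(B^2,X). \]
So $\Hol(B^2,X)$ is equicontinuous, indeed uniformly $1$-Lipschitz. Since $X$ is compact, Arzel\`a--Ascoli yields pre-compactness in $C^0(B^2,X)$ with the compact-open topology.

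For the parenthetical claim (compactness), I will show that a locally uniform limit $f$ of holomorphic maps $f_n$ is holomorphic. Near each point, local uniform convergence lets me work in a coordinate chart of $X$ around $f(z_0)$: for large $n$, the $f_n$ map a small disk about $z_0$ into that chart. There I apply Weierstrass' theorem on uniform limits of holomorphic functions. Note that constant maps are allowed in $\Hol$, so degenerate limits cause no problem.

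\emph{Pre-compact $\Rightarrow$ hyperbolic.} I argue by contrapositive. If $X$ is not hyperbolic, then $X$ has genus $0$ or $1$, so its universal cover is $\C$ or $\CP^1$ and there is a nonconstant holomorphic map $g\colon \C\to X$. (For the sphere take the inclusion; for a torus take the quotient map $\C \to \C/\Lambda$.) Pick $a$ with $g'(a)\neq 0$ in local coordinates, and set
\[ f_n(z)=g(a+nz), \]
so that $f_n \in \Hol(B^2,X)$. The $f_n$ are holomorphic, and $|f_n'(0)| = n|g'(a)| \to \infty$ measured with any fixed Hermitian metric on $X$. If a subsequence converged in $C^0$, the limit would be holomorphic by the argument above. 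Locally uniform convergence in charts, together with the Cauchy estimates, would then give convergence of derivatives at $0$, contradicting the blow-up. Hence $\Hol(B^2,X)$ is not pre-compact.

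\emph{Main obstacle.} The only delicate step is the chart-localization used twice: passing from $C^0$ convergence of maps into a manifold to convergence of holomorphic functions in coordinates. This needs the images of a small disk to lie eventually in a single chart, which follows from uniform convergence near $z_0$ and continuity of the limit. Everything else is classical: Schwarz--Pick, Arzel\`a--Ascoli, and uniformization.
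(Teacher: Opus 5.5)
Your proof is correct, but it takes a different route from the paper. The paper never proves Theorem~\ref{thm:MontelBasic} directly. It obtains it as the case $\dim X = 2$, $\phi = \omega$ of Corollary~\ref{cor:MontelCalibrated}, that is, of the Royden-type Theorem~\ref{theorem:theoremC}, using Royden's result that K\"ahler manifolds are $\omega$-replete.

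\textbf{Paper's forward direction.} Hyperbolic $\Rightarrow$ pre-compact runs through the intrinsic Kobayashi distance:
\begin{itemize}
\item Non-degeneracy of $d_\omega$, together with repleteness, makes $d_\omega$ induce the manifold topology (Theorem~\ref{thm:main-claim}).
\item Holomorphic disks are $1$-Lipschitz into $(X,d_\omega)$.
\item Arzel\`a--Ascoli then gives pre-compactness.
\end{itemize}

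\textbf{Paper's converse.} The chain is: pre-compact $\Rightarrow$ even family $\Rightarrow$ a uniform bound on $\Vert df_0\Vert$ for disks centred near $p$ $\Rightarrow$ Royden's criterion $\Rightarrow$ Kobayashi hyperbolicity. The derivative bound is the Schwarz lemma, Theorem~\ref{lem:lemJKS}, proved via a Caccioppoli estimate and the mean value inequality for Smith immersions.

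\textbf{Your route.} You use uniformization twice. First, it gives an explicit complete metric of curvature $-1$, in which Schwarz--Pick yields the Lipschitz bound. Second, in the non-hyperbolic case, it gives an entire curve $g$ whose rescalings $f_n(z)=g(a+nz)$ all pass through the same point while their derivatives blow up.

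\textbf{How the converses compare.} Your second direction is logically the contrapositive of the paper's (5)$\Rightarrow$(1). In both, pre-compactness forces a uniform derivative bound at the centre. You get that bound from one-variable Cauchy estimates; the paper needs the $k$-harmonic mean value inequality.

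\textbf{What your approach buys.} It is short and classical. Your Weierstrass argument also shows that $\Hol(B^2,X)$ is closed in $C^0(B^2,X)$, which proves the parenthetical compactness claim. The paper's corollary does not address that claim; it yields only pre-compactness.

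\textbf{What it costs.} It depends on the classification of simply connected Riemann surfaces, i.e.\ on having both an explicit hyperbolic metric and an entire curve available. Neither has a counterpart for a general calibrated manifold. The paper's argument avoids uniformization entirely, and that is exactly what lets it extend to compact $\phi$-replete $(X,g,\phi)$.
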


\indent Theorem \ref{thm:MontelBasic} has itself been generalized to higher dimensions, and to the non-compact setting.  Notably, Royden~\cite{Royden} proved that a complex manifold $X$, not necessarily compact, is Kobayashi hyperbolic if and only if $\mathrm{Hol}(B^2, X)$ is equicontinuous (for some metric inducing the topology).  In the process, he established the necessity and sufficiency of a simple pointwise condition (``Royden's criterion") for Kobayashi hyperbolicity.  Similarly, Kiernan~\cite{Kiernan} proved that the Kobayashi hyperbolicity of $X$ is almost (but not quite) equivalent to requiring that $\mathrm{Hol}(B^2, X)$ be a normal family.  In addition to yielding Theorem \ref{thm:MontelBasic} as a quick corollary, these two general results showcase deep relationships between the geometry of $X$, on the one hand, and the analytic properties of $\mathrm{Hol}(B^2, X)$, on the other.

\subsection{Main results} \label{sub:MainResults}

\indent \indent In this work, we prove calibrated analogues of Royden's, Kiernan's, and Montel's theorems, thereby linking the hyperbolicity of a calibrated manifold $(X, g_X, \phi)$ to the analytic aspects of $\mathrm{SmIm}(B^k, X)$, the space of Smith immersions from the $k$-ball into $X$.  Since the notion of ``Smith immersion" is relatively new, let us now recall the relevant definitions. \\
\indent Let $(X, g_X, \phi)$ be a calibrated manifold, by which we mean an oriented Riemannian $n$-manifold $(X,g_X)$ together with a calibration $\phi \in \Omega^k(X)$.  A \emph{Smith immersion} (also called a \emph{Smith map} or a \emph{conformal $\phi$-curve}) is a $C^1$ map $f \colon \Sigma^k \to X^n$ whose domain is an oriented Riemannian $k$-manifold $(\Sigma^k, g_\Sigma)$ that satisfies
\begin{align*}
f^* g_X & = \lambda^2 g_\Sigma, \\
f^* \phi & = \lambda^k \vol_\Sigma,
\end{align*}
for some function $\lambda \colon \Sigma \to [0,\infty)$.  The first equation says that $f$ is weakly conformal, which implies that $\mathrm{rank}(df_x) = k$ or $\mathrm{rank}(df_x) = 0$ at each $x \in \Sigma$.  Geometrically, Smith immersions of full rank are precisely the conformal immersions whose images are $\phi$-calibrated submanifolds.  In this way, Smith immersions provide a conformal mapping approach to the study of calibrated geometry.  Quasiconformal generalizations of Smith immersions have been studied by Pankka \cite{Pankka}, Ikonen--Pankka~\cite{IP}, and others, and lead to a fruitful theory.\\
\indent When $\phi = \omega \in \Omega^2(X)$ is a K\"{a}hler calibration, the Smith immersions into $X$ are precisely the holomorphic curves $f \colon \Sigma^2 \to X$ from Riemann surfaces into $X$.  Indeed, Smith immersions enjoy several properties analogous to those of holomorphic curves~\cite{CKM}.  For example, just as holomorphic curves into K\"{a}hler manifolds are harmonic maps, it turns out that Smith immersions into calibrated manifolds are $k$-harmonic maps. \\
\indent Motivated by this analogy, Broder--Iliashenko--Madnick \cite{BIM} used Smith immersions in place of holomorphic curves to define various hyperbolicity notions for calibrated manifolds.  The starting point is the \emph{KR $\phi$-metric} $K_X \colon TX \to [0,\infty]$, defined by
$$K_X(v_p) = \inf\!\left\{ a > 0 \colon \exists f \in \mathrm{SmIm}(B^k, X) \text{ s.t. } f(0) = p, \, df_0(e_1) = \frac{1}{a}v\right\},$$
where the $k$-ball $B^k$ is equipped with the Poincar\'{e} metric.  A calibrated manifold $X$ is called \emph{$R_\phi$-hyperbolic at $p \in X$} if there exists a neighborhood $U \subset X$ and a constant $c > 0$ such that $K_X(v) \geq c|v|$ for all $v \in TU$. We say $X$ is $R_\phi$-hyperbolic if it is so at every $p \in X$. In the case of a K\"{a}hler calibration $\phi = \omega$, the KR $\omega$-metric is precisely the Kobayashi-Royden pseudometric, and $R_\omega$-hyperbolicity is Royden's criterion~\cite{Royden}. \\
\indent We say $X$ is \emph{$\phi$-replete} if $K_X$ is upper semicontinuous.   In this case, we say that $X$ is \emph{$K_\phi$-hyperbolic} if the pseudo-distance $d_\phi \colon X \times X \to [0,\infty]$ defined by
$$d_\phi(p,q) = \inf_\gamma \int_0^1 K_X(\gamma'(t))\,dt,$$
where the infimum runs over all piecewise-smooth curves $\gamma \colon [0,1] \to X$ from $p$ to $q$, is non-degenerate.  When $\phi = \omega$ is a K\"{a}hler calibration, $\omega$-repleteness is automatic, and $K_\omega$-hyperbolicity is exactly Kobayashi hyperbolicity. \\

\indent In \cite{BIM}, it was shown that for $\phi$-replete calibrated manifolds, $R_\phi$-hyperbolicity implies $K_\phi$-hyperbolicity.  In $\S$\ref{sec:EquivHyp}, we prove the following calibrated analogue of Royden's theorem, thereby establishing the converse implication.

\begin{thm}[Royden for calibrated manifolds] \label{thm:RoydenMain}
Let $(X, g, \phi)$ be a $\phi$-replete calibrated manifold. The following are equivalent:
\begin{enumerate}\setlength\itemsep{-0.5mm}
 \item[\emph{(1)}] $X$ is $R_{\phi}$-hyperbolic.
 \item[\emph{(2)}] $X$ is $K_{\phi}$-hyperbolic.
 \item[\emph{(3)}] $X$ is $K_{\phi}$-hyperbolic and $d_{\phi}$ induces the standard topology.
 \item[\emph{(4)}] $X$ is $K_\phi$-hyperbolic and $\mathrm{SmIm}(B^k,X)$ is pointwise equicontinuous with respect to $d_\phi$.
 \item[\emph{(5)}] $\mathrm{SmIm}(B^k,X)$ is an even family.
\end{enumerate}
\end{thm}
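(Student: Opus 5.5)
We prove the cycle of implications (1) $\Rightarrow$ (3) $\Rightarrow$ (4) $\Rightarrow$ (5) $\Rightarrow$ (1), with (2) attached via (3) $\Rightarrow$ (2) $\Rightarrow$ (1). The implication (3) $\Rightarrow$ (2) is trivial. The implication (1) $\Rightarrow$ (2) is the result of \cite{BIM} quoted above. The part of (1) $\Rightarrow$ (3) saying that $d_\phi$ induces the manifold topology follows from two local bounds. First, $R_\phi$-hyperbolicity gives $K_X \geq c|\cdot|_g$ on a neighborhood $U$ of each $p$, so $d_\phi \geq c' d_g$ near $p$. The only subtlety is that curves leaving $U$ must still have $\phi$-length at least $c\,\mathrm{dist}_g(p,\partial U')$ for a smaller ball $U'$. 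Second, $K_X(v) \leq C|v|$ locally. This holds because $\phi$-repleteness makes $K_X$ upper semicontinuous, hence locally bounded on the unit sphere bundle; alternatively, it holds near points through which small calibrated balls pass. Hence $d_\phi \leq C d_g$ locally, and the two topologies agree.

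For (3) $\Rightarrow$ (4), we use the distance-decreasing property. Every $f \in \mathrm{SmIm}(B^k,X)$ satisfies $K_X(df(v)) \leq |v|_{\mathrm{Poinc}}$, directly from the definition of $K_X$ by precomposing with Poincar\'{e} isometries of $B^k$. Consequently $d_\phi(f(x),f(y)) \leq d_{B^k}(x,y)$. The family is therefore uniformly $1$-Lipschitz from the Poincar\'{e} distance to $d_\phi$, which is much stronger than pointwise equicontinuity. For (4) $\Rightarrow$ (5), we need the notion of an even family, which we take to mean equicontinuity with respect to some (equivalently, any) distance inducing the topology. By (3), $d_\phi$ induces the topology. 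To close the loop properly, we instead route (2) $\Rightarrow$ (3), and so we must show that $K_\phi$-hyperbolicity alone forces $d_\phi$ to induce the topology. We do this by arguing (2) $\Rightarrow$ (1) directly.

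The core step, and the main obstacle, is (5) $\Rightarrow$ (1), with (2) $\Rightarrow$ (1) handled similarly. We argue by contradiction. Suppose $R_\phi$-hyperbolicity fails at $p$. Then there exist $v_j \to p$ with $|v_j| = 1$ and $K_X(v_j) \to 0$. This gives Smith immersions $f_j$ with $f_j(0) \to p$ and $|d(f_j)_0| \to \infty$. Equicontinuity at $0$ puts $f_j(B_r)$ into a fixed small geodesic ball $W$ around $p$ for some fixed Poincar\'{e} radius $r$. We then need a Schwarz lemma: a Smith immersion from $B^k$ (or from $B_r$) into a small ball $W$ has $|df(0)|$ bounded by a constant depending only on $r$ and $W$. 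This bound contradicts $|d(f_j)_0| \to \infty$. To prove the Schwarz lemma, we plan to use the fact that Smith immersions are $k$-harmonic and weakly conformal, so that their images are calibrated and hence minimal. Combining the monotonicity formula for the calibrated image with the conformal identity $f^*\phi = \lambda^k \vol$, we get that the calibrated volume $\int_{B_r} \lambda^k = \int_{B_r} f^*\phi = \int_{B_r} f^*d\psi$, where $\phi = d\psi$ on the contractible ball $W$. Stokes' theorem bounds this in terms of boundary data. An $\varepsilon$-regularity or Harnack-type estimate for $\lambda$, which is available because the conformal factor of a $k$-harmonic conformal map satisfies an elliptic inequality, then converts the volume bound into a pointwise bound on $\lambda(0)$. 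Making this last step rigorous without extra curvature assumptions is where we expect the difficulty to lie.

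For (2) $\Rightarrow$ (1), we plan a Brody/Zalcman-type rescaling applied to the same sequence $f_j$. After reparametrizing by the Poincar\'{e} isometries, the rescaled maps would converge to a nonconstant Smith immersion $\R^k \to X$, or to one into a bounded region, along which $d_\phi$ degenerates. Uniform limits of Smith immersions remain Smith because the defining equations are closed under $C^1$ convergence, given the $C^{1,\alpha}$ estimates for $k$-harmonic maps. Such an entire $\phi$-curve would force $K_X = 0$ along its image, contradicting the non-degeneracy of $d_\phi$.
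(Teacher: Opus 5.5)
\textbf{Main gap: (2) $\Rightarrow$ (1).} As you note yourself, your (4) $\Rightarrow$ (5) needs $d_\phi$ to induce the manifold topology, and (4) does not contain this. So everything involving (2) and (4) rests on this step, and the Brody/Zalcman rescaling you propose does not deliver it. First, $X$ is not assumed compact, and the recentred maps have base points $f_j(z_j)$ that need not stay in any compact set, so there is no reason for a limit to exist. More decisively, apart from the vague alternative ``or one into a bounded region along which $d_\phi$ degenerates,'' your rescaling never uses (2). If it worked, it would show that failure of $R_\phi$-hyperbolicity produces a nonconstant Smith map $\R^k \to X$. That would mean $\phi$-hyperbolicity implies $R_\phi$-hyperbolicity, which is false for non-compact $X$ (already for K\"{a}hler domains in $\C^2$; cf.\ the remark at the end of \S\ref{sub:Hyp-Prelim}). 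The missing idea is a Barth-type argument for (2) $\Rightarrow$ (3), built from ingredients you already have. Repleteness gives $K_X \le C|v|$ over a compact neighborhood, hence $d_\phi \le C\,\mathrm{dist}_g$ on a small convex ball $B_\delta(x)$, so $y \mapsto d_\phi(x,y)$ is continuous there. On the compact sphere $\partial B_{\alpha/2}(x)$ this function attains a minimum $m>0$ by non-degeneracy. Every path from $x$ to a point outside $\overline{B_{\alpha/2}(x)}$ crosses that sphere, so $\{y : d_\phi(x,y) < m\} \subset B_\alpha(x)$. The positive minimum on a sphere replaces the pointwise lower bound $K_X \ge c|v|$ you used for (1) $\Rightarrow$ (3). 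This is the paper's Theorem~\ref{thm:main-claim}. With it, the cycle (2) $\Rightarrow$ (3) $\Rightarrow$ (4) $\Rightarrow$ (5) $\Rightarrow$ (1) closes with no rescaling at all.

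\textbf{Secondary gap: the Schwarz lemma.} Your strategy for (5) $\Rightarrow$ (1) matches the paper's: evenness traps $f(B_\delta)$ in a small ball, then you bound the gradient at $0$. But the Schwarz lemma you sketch is incomplete at both stages. For the energy bound, the monotonicity formula gives lower, not upper, area bounds. Stokes only yields $\int_{B_r}\lambda^k = \int_{\partial B_r} f^*\psi \le \sup_W|\psi| \int_{\partial B_r}|df|^{k-1}$, a boundary term you have no a priori control over; closing it would require integrating a differential inequality for $E(r)=\int_{B_r}|df|^k$. The paper instead gets $\int_{B_{1/2}}|du|^k \le C_k R^k$ for $u(x)=f(\delta x)$ from a Caccioppoli inequality. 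It tests the $k$-harmonic equation against $\varphi^k(u-u(0))$ in normal coordinates and absorbs with Young's inequality, using only $u(B^k)\subset B_R(p)$. For the pointwise step you flag as the difficulty, no curvature hypothesis is needed. The $\varepsilon$-regularity mean value inequality for Smith immersions (Theorem~\ref{thrm:mvt}, from \cite{CKM}) holds in every calibrated manifold once the $k$-energy is below $\varepsilon$, and that smallness is arranged simply by taking $R$ small. A minor point: an even family is not the same as an equicontinuous one for some or every compatible metric; evenness is purely topological and weaker. You only use the trapping property, though, and evenness gives that directly.
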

\noindent Note that our proof of (5) $\implies$ (1) does not use the $\phi$-repleteness assumption.

\begin{cor}[Montel for compact calibrated manifolds] \label{cor:MontelCalibrated} Let $(X, g, \phi)$ be a compact $\phi$-replete calibrated manifold.  Then $X$ is $K_\phi$-hyperbolic if and only if $\mathrm{SmIm}(B^k, X)$ is pre-compact in $C^0(B^k, X)$.
\end{cor}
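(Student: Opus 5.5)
The plan is to deduce both directions from Theorem \ref{thm:RoydenMain} together with the Arzel\`a--Ascoli theorem. Here $X$ is compact, and $C^0(B^k, X)$ carries the compact-open topology. Since $X$ is a compact metric space, any metric inducing its topology can be used to measure equicontinuity. Arzel\`a--Ascoli then says that a family $\mathcal{F} \subset C^0(B^k, X)$ is pre-compact if and only if it is pointwise equicontinuous; pointwise relative compactness of the images is automatic because $X$ is compact.

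($\Rightarrow$) Suppose $X$ is $K_\phi$-hyperbolic. By Theorem \ref{thm:RoydenMain}, (2) $\Rightarrow$ (3), the distance $d_\phi$ induces the manifold topology of $X$. By (2) $\Rightarrow$ (4), the family $\mathrm{SmIm}(B^k, X)$ is pointwise equicontinuous with respect to $d_\phi$. Since $d_\phi$ is a metric inducing the compact topology of $X$, Arzel\`a--Ascoli gives pre-compactness of $\mathrm{SmIm}(B^k, X)$ in $C^0(B^k, X)$.

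($\Leftarrow$) Suppose $\mathrm{SmIm}(B^k, X)$ is pre-compact in $C^0(B^k, X)$. Fix the Riemannian distance $d_g$ on $X$. By the converse direction of Arzel\`a--Ascoli (valid since $B^k$ is locally compact), the family is equicontinuous with respect to $d_g$ at each point, uniformly on compact subsets of $B^k$. I would then verify that this is condition (5) of Theorem \ref{thm:RoydenMain}, namely that $\mathrm{SmIm}(B^k, X)$ is an even family. The natural definition of evenness is equicontinuity with respect to some metric inducing the topology of $X$, so this step should be essentially definitional. If the paper's definition of "even" is phrased differently, I would reconcile it using compactness of $X$: any two compatible metrics on a compact space are uniformly equivalent, so equicontinuity does not depend on which compatible metric is used. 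Theorem \ref{thm:RoydenMain}, (5) $\Rightarrow$ (2), then gives $K_\phi$-hyperbolicity.

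The only potential obstacle is this matching of the Arzel\`a--Ascoli output with the precise definition of an even family, in particular whether it is pointwise or local-uniform equicontinuity and which metric is used. On a compact target these notions coincide, so I expect no real difficulty. The parenthetical analogue of Theorem \ref{thm:MontelBasic}, that the family is actually compact, is not claimed here. If desired, it would follow from closedness of $\mathrm{SmIm}$ under locally uniform limits, but that is not needed.
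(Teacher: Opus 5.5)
Your proposal is correct and follows the paper's route. The forward direction uses (2) $\Rightarrow$ (3),(4) together with Arzel\`a--Ascoli on the compact target, exactly as in Remark~\ref{rmk:CompactEquivalence}. The converse uses pre-compact $\Rightarrow$ pointwise equicontinuous $\Rightarrow$ even, and then (5) $\Rightarrow$ (1) $\Rightarrow$ (2).

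One correction on the step you flagged: the paper's ``even family'' is Kelley's neighborhood-based notion, not equicontinuity by definition. The passage from pointwise equicontinuity to evenness is the short triangle-inequality argument of Proposition~\ref{prop:EquiEven}(b), which holds for any compatible metric. It is not a matter of showing that compatible metrics are uniformly equivalent.
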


\noindent Note that by taking $\dim(X) = 2$ and $\phi = \omega$ to be a K\"{a}hler calibration in Corollary \ref{cor:MontelCalibrated}, we recover Theorem  \ref{thm:MontelBasic} as a special case. \\

The most difficult implication in Theorem \ref{thm:RoydenMain} is (5) $\implies$ (1). To prove this, we establish in $\S$\ref{sec:Schwarz} the following Schwarz lemma for Smith immersions, which is of independent interest.

\begin{thm}[Schwarz lemma] \label{thm:SchwarzMain}
Let $(X, g_X, \phi)$ be calibrated. Then there exists $C > 0$  with the following property. Fix $p \in X$ and $R > 0$. If there exist radii $\delta \in (0,1)$ and $\delta' \in (0,R)$ such that all Smith immersions $f \colon B^k \to X$ with $f(0) \in B_{\delta'}(p)$ satisfy $f(B_\delta(0)) \subset B_R(p)$, then every such $f$ satisfies
$$ \Vert df_0 \Vert \leq  C\frac{R}{\delta}.$$
\end{thm}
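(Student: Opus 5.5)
The plan is to combine a calibration/Stokes energy bound with a mean-value (or $\varepsilon$-regularity) estimate for the conformal factor $\lambda$ of a Smith immersion. Throughout, $\lambda$ is the function with $f^*g_X = \lambda^2 g_\Sigma$ and $f^*\phi = \lambda^k\vol_\Sigma$. On the Euclidean ball $B_\delta(0)\subset B^k$ with $\delta<1$, I expect to have to track the Poincaré factor separately; I will first aim for a bound with constants depending on $\delta$ only through $\delta^{-1}$, and then check this against the Poincaré normalization. First reduce to small $R$: if $R$ exceeds a fixed threshold $R_0$, chosen below the convexity radius on a neighborhood of $p$, the claimed bound is implied by the small-$R$ case after shrinking $R$. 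Making this uniform in $p$ without compactness is itself a point to check, since $C$ must be independent of $p$.

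\textbf{Step 1 (energy bound).} On the geodesic ball $B_R(p)$, with $R\le R_0$, write $\phi=d\psi$ using the radial homotopy (Poincaré lemma) operator. This gives $|\psi|\le C_0 R$ pointwise, with $C_0$ depending only on local geometry. Set
$$E(r)=\int_{B_r(0)} f^*\phi=\int_{B_r(0)}\lambda^k \vol .$$
Stokes gives
$$E(r)=\int_{\partial B_r}f^*\psi\le C_0R\int_{\partial B_r}\lambda^{k-1}.$$
Applying Hölder on the sphere, $\int_{\partial B_r}\lambda^{k-1}\le (E'(r))^{(k-1)/k}\,|\partial B_r|^{1/k}$, yields the differential inequality $E\le C R\, r^{(k-1)/k}(E')^{(k-1)/k}$. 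Integrating from $\delta/2$ to $\delta$ should give $E(\delta/2)\le C R^k$. This is scale-correct, since $\lambda\sim R/\delta$ on a ball of volume $\sim\delta^k$. The containment $f(B_\delta(0))\subset B_R(p)$ is exactly what is needed for the primitive $\psi$ to be available along the whole image.

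\textbf{Step 2 (from energy to a pointwise bound at $0$).} Here we need a mean-value inequality of the form
$$\lambda(0)^k\le C\,\delta^{-k}\!\int_{B_{\delta/2}}\lambda^k .$$
Combined with Step 1, this gives $\|df_0\|=\lambda(0)\le CR/\delta$. I would derive it from the fact that Smith immersions are $k$-harmonic and conformal. For conformal $k$-harmonic maps into a manifold with bounded geometry, a Bochner-type computation should show that $\lambda^k$, or a suitable power of it, is subharmonic up to a curvature term $\lesssim \lambda^{k+2}$. An $\varepsilon$-regularity argument then applies. If $E(\delta/2)<\varepsilon$, the mean-value inequality holds. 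Otherwise the bound is immediate, because Step 1 gives $E\lesssim R^k$ and $R$ is small.

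\textbf{Step 3 (using the uniform hypothesis).} The hypothesis concerns all $f$ with $f(0)\in B_{\delta'}(p)$, which is invariant under precomposition by Poincaré isometries. This suggests a Brody-type rescaling to remove any residual gap in Step 2. Suppose some $f$ violates the bound. Choose a point $a\in B_{\delta/2}$ nearly maximizing $(\delta/2-|x|)\lambda(x)$. Precomposing $f$ with the Möbius map sending $0\mapsto a$ gives a new Smith immersion. It is again Smith because the map is a conformal orientation-preserving isometry of $B^k$, and its center still lands in $B_{\delta'}(p)$ provided $a$ stays in the region mapped near $p$. The new map has derivative controlled on a definite ball, so Step 2 applies with a priori bounded $\lambda$.

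\textbf{Main obstacle.} I expect the main obstacle to be Step 2: establishing the subharmonicity or $\varepsilon$-regularity estimate for $\lambda$ with constants uniform in $p$. In particular, the curvature error terms must be controlled uniformly, and Step 3 is where the $\delta'$-hypothesis must be invoked to make this work.
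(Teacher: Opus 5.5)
Your Steps 1 and 2 already give a correct proof, in the same small-$R$ regime the paper works in. Step 1, however, reaches the energy bound by a genuinely different route.

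\textbf{Comparison with the paper.} The paper rescales to $u(x)=f(\delta x)$ and derives $\int_{B_{1/2}}|du|^k\le C_kR^k$ from a Caccioppoli inequality. It tests the $k$-harmonic equation against $\varphi^k(u-u(0))$ in normal coordinates on $B_R(p)$, applies Cauchy--Schwarz and Young, and uses $|u-u(0)|\le 4R$. You instead use only $d\phi=0$ and $f^*\phi=\lambda^k\vol$: a radial-homotopy primitive with $|\psi|\lesssim R$, Stokes on $B_r$ for $r<\delta$, H\"older on $\partial B_r$, and integration of $E^{k/(k-1)}\lesssim R^{k/(k-1)}\,r\,E'$ over $[\delta/2,\delta]$. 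This checks out and gives $E(\delta/2)\lesssim R^k$.

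Your Step 2 is then exactly the paper's. The paper simply cites the $\varepsilon$-regularity mean value inequality of \cite{CKM} (Theorem~\ref{thrm:mvt}, rescaled as in Remark~\ref{rmk:mvt}), so you need not re-derive it via Bochner. It then shrinks $R$ so that $CR^k<\varepsilon$. Your ``otherwise the bound is immediate'' clause should be replaced by exactly this: a large-energy case yields no bound, but it is vacuous once $R$ is small.

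\textbf{What each route buys.} Your energy bound never writes the Euler--Lagrange equation in coordinates. The paper's displayed weak equation is the one for a flat target, so for curved $g_X$ there are lower-order Christoffel terms that must be absorbed using small $R$. The paper's Caccioppoli argument, on the other hand, uses only $k$-harmonicity and weak conformality. That is why the same computation also proves Proposition~\ref{prop:bounded-k-harmonic-estimate} for maps that need not be Smith. Your worry about the Poincar\'e factor is moot: the Smith condition and $\int\lambda^k$ are conformally invariant in dimension $k$, and $g_1=g_0$ at $0$.

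\textbf{Two parts of the plan to drop.} First, Step 3 is unnecessary. The estimate holds map by map: any single Smith $f$ with $f(B_\delta(0))\subset B_R(p)$ satisfies the bound, and the $\delta'$-hypothesis only specifies which $f$ are covered. As written, Step 3 also fails, since for $a\in B_{\delta/2}$ you only know $f(a)\in B_R(p)$, not $f(a)\in B_{\delta'}(p)$.

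Second, large $R$ cannot be reduced to small $R$. The hypothesis at scale $R$ says nothing at any scale $R_0<R$, and the statement is actually false for large $R$. For $X=\CP^1$ with $R>\mathrm{diam}(X)$ the hypothesis holds trivially, yet the holomorphic maps $f_N(z)=[Nz:1]$ have $\|d(f_N)_0\|\to\infty$.

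The paper's proof likewise just assumes $R$ small. What both arguments establish is the bound for $R$ below a $p$-dependent threshold: $B_R(p)$ a convex normal ball with metric within a factor $2$ of Euclidean, and energy below $\varepsilon$. The constant $C$ depends only on $k$ and the mean-value constants. That is all the application $(5)\implies(1)$ uses, since there $R$ is fixed small at the outset, and it also settles your uniformity-in-$p$ concern.
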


Also in $\S$\ref{sec:Schwarz}, we consider the question of which domains in flat euclidean space are $R_\phi$-hyperbolic, for a given calibration $\phi \in \Omega^k(\R^n)$.  In this direction, we prove:

\begin{thm}[Bounded domains are $R_\phi$-hyperbolic] \label{thm:BddDomMain}
Let $(U, g_0, \phi)$ be a calibrated manifold for which $U \subset \R^n$ is an open set, $g_0$ is the flat metric, and $\phi \in \Omega^k(\R^n)$ is a calibration.  If $U$ is bounded, then $U$ is $R_\phi$-hyperbolic.
\end{thm}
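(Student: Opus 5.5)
The plan is to deduce Theorem \ref{thm:BddDomMain} from the Schwarz lemma, Theorem \ref{thm:SchwarzMain}, applied with $R$ larger than the diameter of $U$, so that its containment hypothesis holds automatically. Since $U$ is bounded, fix $D = \mathrm{diam}(U) < \infty$ (with respect to the flat metric $g_0$). For any $p \in U$ and any Smith immersion $f \colon B^k \to U$, the whole image $f(B^k)$ lies in $U$. Because $U \subset \R^n$ carries the flat metric, the Riemannian distance on $U$ may differ from the euclidean one when $U$ is not convex. I will therefore work with the euclidean ball $B_R(p) \subset \R^n$ and regard $f$ as a Smith immersion into the calibrated manifold $(\R^n, g_0, \phi)$, since $\phi$ is given on all of $\R^n$. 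Then $f(B^k) \subset U \subset B_R(p)$ whenever $R > D$.

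\textbf{Step 1.} Apply Theorem \ref{thm:SchwarzMain} to $X = \R^n$ with the fixed calibration $\phi$, which yields a constant $C>0$. Take $R = D+1$, $\delta = 1/2$, and $\delta'$ any number in $(0,R)$, and restrict attention to Smith immersions with image in $U$. One subtlety: the theorem as stated quantifies over \emph{all} Smith immersions into $X$ with $f(0)\in B_{\delta'}(p)$, not only those landing in $U$. To handle this I will either reread its proof to check that it only uses the containment for the specific $f$ under consideration, or apply it with $X = U$ itself. In the latter case the ball $B_R(p)$ in $U$'s intrinsic metric contains all of $U$ once $R$ exceeds the intrinsic diameter. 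That diameter may be infinite, but the hypothesis only needs $f(B_\delta(0)) \subset B_R(p)$.

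I expect this to be the main obstacle: the intrinsic distance on $U$ can be unbounded, so the cleanest route is to use $X=\R^n$ and the fact that the proof of the Schwarz lemma depends only on the $f$ at hand. I would first try the $X = \R^n$ version. If that fails, I would prove directly an analogous estimate for $f \colon B^k \to B_R(p) \subset \R^n$ using $k$-harmonicity of Smith immersions. In flat space the calibration condition gives $\lambda^k = f^*\phi/\vol$, and a mean-value or monotonicity argument on the calibrated image bounds $\lambda(0)$ by $C R/\delta$.

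\textbf{Step 2.} Conclude. For every Smith immersion $f \colon B^k \to U$ we get $\|df_0\| \le C(D+1)/\tfrac12 =: M$, uniformly in the base point. Now let $v \in T_qU$ and suppose $f \in \mathrm{SmIm}(B^k,U)$ with $f(0)=q$ and $df_0(e_1) = v/a$. The norm $\|df_0\|$ is taken with respect to the Poincar\'e metric at $0$, which is a fixed multiple of the euclidean one, so $|v|/a \le M|e_1|_{\mathrm{Poinc}} = M c_0$. Hence $a \ge |v|/(Mc_0)$, and taking the infimum gives $K_U(v) \ge c|v|$ with $c = 1/(Mc_0) > 0$ for all $v \in TU$. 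Thus $U$ is $R_\phi$-hyperbolic, in fact uniformly so.
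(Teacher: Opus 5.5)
There is a genuine gap in your main route: Theorem~\ref{thm:SchwarzMain} cannot be applied with a large radius such as $R=D+1$. Its statement says ``fix $R>0$'', but its proof (Theorem~\ref{lem:lemJKS}) assumes $R$ is small. After the Caccioppoli step it only knows $\int_{B_{1/2}}|du|^k\le C_kR^k$, and it must shrink $R$ until $C_kR^k<\varepsilon$ in order to invoke the small-energy mean value inequality, Theorem~\ref{thrm:mvt}. This is not a removable technicality, because for arbitrary $R$ the statement is false. For $X=\CP^n$ with $R$ larger than the diameter, the containment hypothesis holds trivially, and the conclusion would make $\CP^n$ $R_\phi$-hyperbolic.

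Reading the proof ``per $f$'' settles the quantifier issue you raised, but not this one. Nor can you retreat to small $R$. The hypothesis that every $f$ with $f(0)$ near $p$ maps $B_\delta(0)$ into a small ball $B_R(p)$ is precisely the evenness/equicontinuity you are trying to prove. Shrinking $\delta$ does not help either, because the Caccioppoli bound on the rescaled map depends only on the radius of the target ball (the $k$-energy is scale invariant on $k$-dimensional domains). Your fallback (``a mean-value or monotonicity argument'') does not identify the missing estimate. Monotonicity controls the area of the image from below, not $\lambda(0)$ from above.

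The missing idea, which is how the paper proceeds, is a mean value inequality in flat space with \emph{no} small-energy hypothesis (Proposition~\ref{cor:mvt}). The curvature terms in the Bochner formula (Theorem~\ref{cite:Bochner}) vanish for flat domain and target, so $w=|df|^k$ satisfies $\Delta w\ge 0$ on $\{df\neq 0\}$. Since $w$ is only continuous at critical points, one compares $w$ with its harmonic extension $h_r$ on each ball $B_r(x_0)$, using $w=0\le h_r$ on $\partial\{w>0\}$. This gives that $w(x_0)$ is at most its sphere averages, hence its ball averages, so $\sup_{B_{1/2}}|df|^k\le C_k\int_{B^k}|df|^k$.

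Combined with the Caccioppoli estimate for the test function $\varphi^k(f-p)$, this gives $\|df_0\|\le C_kR$ for every weakly conformal $k$-harmonic $f\colon B^k\to B_R(p)\subset\R^n$ and \emph{every} $R>0$ (Proposition~\ref{prop:bounded-k-harmonic-estimate}). The estimate uses only weak conformality and $k$-harmonicity, not $\phi$. With it in hand, your Step~2 goes through, with $c_0=1$ since the Poincar\'e metric equals $g_0$ at the origin.
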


\indent We now consider calibrated analogues of Kiernan's theorem.  In a classic case of turning a theorem into a definition, a complex manifold $X$ is \emph{taut} if  $\mathrm{Hol}(B^2, X)$ is a normal family.  This means that every sequence of holomorphic maps $f_j \colon B^2 \to X$ either has a convergent subsequence (in the compact-open topology) or is compactly divergent.    Because taut manifolds satisfy a version of Montel's theorem by definition, they also enjoy its many consequences. In this way, taut manifolds can be viewed as a subclass of Kobayashi hyperbolic manifolds with a particularly rich function theory.  See~\cite{Abate} for this perspective. \\
\indent By analogy, we shall say that a calibrated manifold $(X, g_X, \phi)$ is \emph{$\phi$-taut} if $\mathrm{SmIm}(B^k, X)$ is a normal family.  When $(X, g_X, \omega)$ is a K\"{a}hler manifold, $\omega$-tautness is the usual notion of tautness for complex manifolds.  In $\S$\ref{sec:EquivHyp}, we prove:

\begin{thm}[Kiernan for calibrated manifolds] \label{thm:Kiernan-main}
Let $(X, g, \phi)$ be a calibrated manifold.
\begin{enumerate}[(a)]\setlength\itemsep{-0.2mm}
\item If $X$ is $\phi$-replete, $K_{\phi}$-hyperbolic and $(X, d_{\phi})$ is complete, then $X$ is $\phi$-taut.
\item If $X$ is $\phi$-taut, then  $\mathrm{SmIm}(B^k,X)$ is an even family, and hence $X$ is $R_\phi$-hyperbolic.
\end{enumerate}
\end{thm}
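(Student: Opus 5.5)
Part (a) follows the classical Kiernan argument, with the distance-decreasing property and Theorem \ref{thm:RoydenMain} doing the work. Part (b) is a contradiction argument that feeds into Theorem \ref{thm:RoydenMain}.

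\textbf{Part (a).} The first step is to note that every Smith immersion $f \colon B^k \to X$ is distance non-increasing from the Poincar\'e distance $d_{B^k}$ to $d_\phi$. Indeed, precomposing with isometries of $B^k$ preserves Smith immersions (these are conformal maps respecting orientation and the volume form). So the definition of $K_X$ gives $K_X(df_x(v)) \le |v|_{B^k}$ at every $x$, and integrating along curves yields $d_\phi(f(x),f(y)) \le d_{B^k}(x,y)$. Hence $\mathrm{SmIm}(B^k,X)$ is equicontinuous for $d_\phi$. By Theorem \ref{thm:RoydenMain}(3), $d_\phi$ induces the manifold topology.

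Now take a sequence $f_j$ that is not compactly divergent. After passing to a subsequence, there are a compact $K\subset B^k$, a compact $L\subset X$, and points $x_j\in K$ with $f_j(x_j)\in L$. We may assume $x_j\to x_0$ and $f_j(x_j)\to q$. For any compact $K'\supset K$ (taken connected, e.g. a closed Poincar\'e ball), the images $f_j(K')$ lie in a closed $d_\phi$-ball around $q$ of radius $\mathrm{diam}_{B^k}(K')+1$.

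Completeness of $d_\phi$ is what makes such closed balls compact. Since $d_\phi$ is a length metric inducing the topology of a locally compact space, the Hopf--Rinow / Cohn-Vossen theorem applies to $(X,d_\phi)$. Arzel\`a--Ascoli on an exhaustion of $B^k$, together with a diagonal argument, then gives a subsequence converging locally uniformly to a continuous $f \colon B^k\to X$.

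It remains to show that the limit is a Smith immersion. Here I would use the closedness of $\mathrm{SmIm}$ under $C^0$ limits. The closedness of the calibrated condition and the $k$-harmonic regularity (local $C^{1,\alpha}$ estimates for $k$-harmonic maps, as in \cite{CKM}) upgrade $C^0$ convergence to $C^1_{loc}$ convergence, and the two defining equations pass to the limit. I expect this step to be the main obstacle. If the needed regularity is not available, the fallback is to define normality with limits in $C^0$ only, as is often done.

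\textbf{Part (b).} The plan is to argue by contradiction and obtain $R_\phi$-hyperbolicity from evenness via the implication (5)$\Rightarrow$(1) of Theorem \ref{thm:RoydenMain}, which the paper notes does not use $\phi$-repleteness. Suppose evenness fails at some $x_0\in B^k$ and $p\in X$. Then there are $\epsilon>0$, Smith immersions $f_j$, and points $x_j\to x_0$, $y_j\to x_0$ with $f_j(x_j)\to p$ and $d(f_j(x_j),f_j(y_j))\ge\epsilon$. The sequence is not compactly divergent, because $f_j(x_j)$ stays near $p$. By tautness, a subsequence converges locally uniformly to some continuous $f$. Then $f_j(x_j)$ and $f_j(y_j)$ both tend to $f(x_0)$, which contradicts the $\epsilon$-separation. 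So the family is even, and Theorem \ref{thm:RoydenMain} gives $R_\phi$-hyperbolicity.
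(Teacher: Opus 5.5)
Your proof is correct and follows the paper's route. In (a) the paper likewise combines the distance-decreasing property $d_\phi(f(x),f(y)) \leq \mathrm{dist}_1(x,y)$ (cited from \cite{BIM}), Theorem~\ref{thm:main-claim}, Hopf--Rinow for $(X,d_\phi)$, and Arzel\`a--Ascoli (packaged as Proposition~\ref{prop:Equi-Normal}(b)); in (b) it uses your normal-implies-even contradiction argument (Proposition~\ref{prop:Equi-Normal}(c)) followed by (5)~$\Rightarrow$~(1) of Theorem~\ref{theorem:theoremC}. The step you flagged as the main obstacle (showing the limit is Smith) is not needed: normality here is defined in $C(B^k,X)$, so the limit only has to be continuous, and your ``fallback'' is precisely the paper's definition.
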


\indent Finally, we remark that there also exist purely Riemannian notions of hyperbolicity that use conformal harmonic maps from surfaces in lieu of holomorphic curves or Smith immersions.  This theory is due to Forstneri\v{c}--Kalaj~\cite{FK}, Drnovsek--Forstneri\v{c}~\cite{DF}, and Forstneri\v{c}~\cite{For} in flat euclidean space, and to Gaussier--Sukhov~\cite{GS2023, GS2025} in general Riemannian manifolds.  In particular, Drnovsek--Forstneric~\cite{DF} and Gaussier--Sukhov~\cite{GS2023} have proven Riemannian analogues of Royden's and Kiernan's theorems, which inspired the present work.

\begin{rmk}
Theorem \ref{thm:Kiernan-main}(a) has been discovered independently by Broder--Hegarty--Hudecek.  Moreover, the equivalence of $R_\phi$-hyperbolicity and $K_\phi$-hyperbolicity has been established independently by Broder--Hegarty--Hudecek (personal communication) and by Ikonen--Pim (personal communication), although in both cases, the assumptions they make on $(X, g_X, \phi)$ are different from our repleteness hypothesis.
\end{rmk}

\subsection{Organization, notation, and conventions}

\indent \indent This work is organized as follows.  In $\S$\ref{sub:Smith-Prelim} and $\S$\ref{sub:Hyp-Prelim}, we review the basic aspects of Smith immersions and hyperbolicity notions in calibrated geometry.  In $\S$\ref{sub:Products}, we study the hyperbolicity of product manifolds. Section \ref{sec:Schwarz} is devoted to the proofs of Theorems \ref{thm:SchwarzMain} and \ref{thm:BddDomMain}. \\
\indent Section \ref{sub:SpacesCts} contains preliminary remarks on equicontinuity and normal families of continuous maps.  Then, in $\S$\ref{sub:EquivHyp}, we prove Theorem \ref{thm:RoydenMain}.  As an application, in $\S$\ref{sub:DiscreteQuot}, we study the hyperbolicity of discrete quotients.  Finally, $\S$\ref{sec:Tautness} contains the proof of Theorem \ref{thm:Kiernan-main}. \\

\newpage
\noindent \textbf{Notation and conventions:}
\begin{itemize}
\item By ``manifold," we always mean a connected smooth manifold, unless indicated otherwise.  For us, smooth manifolds are assumed to be second countable and Hausdorff.
\item For a Riemannian manifold $(X, g)$, we let $\mathrm{dist}_g \colon X \times X \to [0,\infty)$ denote the standard Riemannian distance function.  When $X$ is oriented, we let $\vol_X \in \Omega^n(X)$ denote the volume form.
\item For a linear operator $A \colon V \to W$ between finite-dimensional inner product spaces, we denote its Hilbert-Schmidt and operator norms, respectively, by
\begin{align*}
|A| & = |A|_{\mathrm{HS}} = \sqrt{ \sum_{i=1}^n |Ae_i|^2 }, & \Vert A \Vert & = \Vert A \Vert_{\mathrm{op}} = \sup_{|v| = 1} \frac{|Av|}{|v|},
\end{align*}
where $\{e_1, \ldots, e_n\}$ is an orthonormal basis of $V$.  Note that when $A$ is conformal, we have that $\Vert A \Vert = \frac{1}{\sqrt{\dim(V)}}|A|$.
\item We let $B_r = \{x \in \R^k \colon |x| < r\}$ be the ball of radius $r$. For $r < R$, the notation $B_r \prec \varphi \prec B_R$ means that $\varphi$ is identically $1$ on $B_{r}$ and has support contained in $B_{R}$.
\item We reserve the notation $B^k = \{x \in \R^k \colon |x| < 1\}$ for the unit ball equipped with the Poincar\'{e} metric $g_1$ of constant curvature $-4$, and use $\mathrm{dist}_1$ to denote the induced Riemannian distance.  Explicitly, these are given by
\begin{align*}
g_1 & = \frac{1}{(1 - |x|^2)^2}\,g_0, & \mathrm{dist}_1(x,y) & = \mathrm{arcsinh}\sqrt{ \frac{|x-y|^2}{(1 - |x|^2)(1 - |y|^2)} },
\end{align*}
where $g_0$ is the flat metric. (An exception occurs in $\S$\ref{sec:Schwarz}, where $B^k$ is equipped with the flat metric.)
\end{itemize}

\noindent \textbf{Acknowledgments.} The authors are indebted to Da Rong Cheng for numerous valuable insights.  Portions of this work were conducted as part of the ``Research in Teams'' program at the Banff International Research Station (BIRS) in October 2025. The authors thank BIRS for their hospitality. The first author acknowledges the support of the International Scientists Project BJNSF \#25ISA023. The second author acknowledges the support of the Natural Sciences and Engineering Research Council of Canada (NSERC), grant number RGPIN-2025-03951. The third author thanks Kyle Broder and Toni Ikonen for helpful conversations. The authors also thank Pekka Pankka for suggesting an improvement to an earlier version of this article.

\section{Preliminaries}

\indent \indent In $\S$\ref{sub:Smith-Prelim} and $\S$\ref{sub:Hyp-Prelim}, we provide a rapid introduction to Smith immersions and hyperbolicity notions in calibrated geometry.  Then, in $\S$\ref{sub:Products}, we investigate the hyperbolicity of product manifolds.  The results in $\S$\ref{sub:Products} are new, and are independent of the rest of the paper.

\subsection{Smith immersions} \label{sub:Smith-Prelim}

\indent \indent The setting of this work is that of calibrated manifolds.

\begin{defn}[Harvey--Lawson~\cite{HL}] ${}$
\begin{itemize}
\item A \emph{calibrated manifold} $(X, g_X, \phi)$ is a Riemannian $n$-manifold $(X, g_X)$ equipped with a calibration $\phi \in \Omega^k(X)$.  By ``calibration," we mean a differential form $\phi$ that is closed ($d\phi = 0$) and has comass one.
\item Let $(X, g_X, \phi)$ be a calibrated manifold with $\phi \in \Omega^k(X)$.  A \emph{calibrated submanifold} is an oriented $k$-dimensional submanifold $\Sigma^k \subset X^n$ that satisfies $\phi|_\Sigma = \vol_\Sigma$.
\end{itemize}
\end{defn}

\begin{thm}[Fundamental theorem of calibrations~\cite{HL}]
Let $(X, g_X, \phi)$ be a calibrated manifold.  If $\Sigma \subset X$ is $\phi$-calibrated, then $\Sigma$ is a minimal submanifold.  Moreover, if $\Sigma$ is compact and without boundary, then $\Sigma$ is volume-minimizing in its homology class.
\end{thm}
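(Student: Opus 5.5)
The plan is the standard Harvey--Lawson argument, which rests on the comass inequality together with Stokes' theorem. First I would establish the pointwise fact. Let $\xi$ be the unit simple $k$-vector of an oriented tangent plane $T_x\Sigma$. Since $\phi$ has comass one, $\phi(\xi)\le 1$ for every such $\xi$. Hence for any oriented $k$-dimensional submanifold $N$ we have the pointwise inequality $\phi|_N \le \vol_N$. Equality holds exactly on calibrated planes.

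For minimality I would take a compactly supported variation $\Sigma_t$ of $\Sigma$, with $\Sigma_0=\Sigma$, supported in a compact region $K\subset \Sigma$. Closedness gives $\phi|_{\Sigma_t} - \phi|_{\Sigma}$ on the support region as $d$ of something compactly supported. Concretely, with $F\colon \Sigma\times[0,t]\to X$ the variation map, Stokes' theorem and $d\phi=0$ give
\begin{align*}
\int_{K} F_t^*\phi - \int_K \phi = \int_{\partial(K\times[0,t])\setminus (K\times\{0,t\})} F^*\phi = 0,
\end{align*}
since $F$ is constant in $t$ on $\partial K$ and the side contribution is degenerate there.

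Combining this with the pointwise inequality yields
\begin{align*}
\mathrm{vol}(F_t(K)) \ge \int_K F_t^*\phi = \int_K \phi = \mathrm{vol}(K).
\end{align*}
So $t=0$ minimizes volume among the variations. The first variation of volume therefore vanishes, which means $\Sigma$ has zero mean curvature, i.e.\ it is minimal.

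For the compact case, let $\Sigma'$ be any compact oriented $k$-cycle homologous to $\Sigma$, so $\Sigma-\Sigma'=\partial T$. Then Stokes' theorem and $d\phi=0$ give $\int_\Sigma\phi=\int_{\Sigma'}\phi$. The comass bound then yields
\begin{align*}
\mathrm{vol}(\Sigma)=\int_\Sigma\phi=\int_{\Sigma'}\phi\le \mathrm{vol}(\Sigma').
\end{align*}
This holds even for integral currents, with mass in place of volume.

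The only delicate point is justifying Stokes' theorem in the homology class. This can be handled by working with smooth singular chains or integral currents, and it is routine. I expect no real obstacle.
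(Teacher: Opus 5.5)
Your argument is correct and is the standard Harvey--Lawson proof: the comass inequality $\phi|_N \le \vol_N$ combined with Stokes' theorem for the closed form $\phi$. The paper gives no proof of this theorem and simply cites Harvey--Lawson.

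One point to state explicitly in the minimality step: read the quantity bounded below by $\int_K F_t^*\phi$ as the mapping volume $\int_K |dF_t(e_1)\wedge\cdots\wedge dF_t(e_k)|\,\vol_K$, for $t$ of either sign near $0$. That is exactly the function whose derivative at $t=0$ the first variation formula identifies with $-\int \langle H, \partial_t F\rangle$.
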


\indent We shall take a conformal mapping approach to the study of calibrated submanifolds.  To explain this point of view, let us recall that a $C^1$ map $f \colon (\Sigma^k, g_\Sigma) \to (X^n, g_X)$ between Riemannian manifolds is called \emph{weakly conformal} if there exists a function $\lambda \colon \Sigma \to [0,\infty)$, called the \emph{conformal factor}, such that $f^*g_X = \lambda^2 g_\Sigma$.  In this case, the conformal factor at $p \in \Sigma$ is given by
\begin{equation*}
\lambda(p) = \frac{1}{\sqrt{k}}|df_p| = \Vert df_p \Vert.
\end{equation*}
If $f$ is weakly conformal, then at each point $p \in \Sigma$, we have either $\mathrm{rank}(df_p) = k$ or $\mathrm{rank}(df_p) = 0$.  Therefore, if $k > n$, then every weakly conformal map is locally constant.  As such, to avoid trivial edge cases, we adopt the following:

\begin{conv} Throughout this work, we use the index range $2 \leq k \leq n$, except where stated otherwise.
\end{conv}

\begin{defn}
Let $(\Sigma^k, g_\Sigma)$ be an oriented Riemannian $k$-manifold, and let $(X^n, g_X, \phi)$ be a calibrated $n$-manifold, where $\phi \in \Omega^k(X)$.  Note that $k = \deg(\phi) = \dim(\Sigma)$.
\begin{itemize}
\item A \emph{Smith immersion} is a $C^1$ map $f \colon \Sigma^k \to X^n$ for which there exists a function $\lambda \colon \Sigma \to [0,\infty)$ satisfying
\begin{align*}
f^*g_X & = \lambda^2 g_\Sigma, \\
f^*\phi & = \lambda^k \vol_\Sigma.
\end{align*}
Equivalently~\cite{IK}, a $C^1$ map $f \colon \Sigma \to X$ is a Smith immersion if and only if it satisfies the single equation
$$f^*\phi = \left( \frac{1}{\sqrt{k}}|df| \right)^k \,\vol_\Sigma.$$
\end{itemize}
Note that the nomenclature can be misleading, because a Smith immersion is only actually an immersion on the open subset of points in $\Sigma$ where $\lambda > 0$. A Smith immersion is also called a \emph{conformal $\phi$-curve} by some authors~\cite{Pankka, IP}, which is perhaps a better name going forward.
\end{defn}
Smith immersions of full rank are exactly the conformal parametrizations of $\phi$-calibrated submanifolds. That is:

\begin{prop}[\cite{CKM}]
If $f \colon (\Sigma^k, g_\Sigma) \to (X, g_X, \phi)$ is a Smith immersion of full rank $k$, then its image $f(\Sigma)$ is an immersed $\phi$-calibrated submanifold of $X$.  Conversely, if $\iota \colon \Sigma \to (X, g_X, \phi)$ is an immersed $\phi$-calibrated submanifold, then equipping $\Sigma$ with its induced metric $g_\Sigma = \iota^*g_X$ and volume form $\vol_\Sigma = \iota^*\phi$ makes $\iota$ into a Smith immersion of full rank.
\end{prop}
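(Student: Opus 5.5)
The plan is to treat the two directions separately. Both rest on the pointwise algebraic facts behind the definition of Smith immersions. Recall that $\phi$ has comass one: for every oriented orthonormal $k$-frame $(u_1,\dots,u_k)$ in $T_{f(p)}X$ we have $|\phi(u_1,\dots,u_k)| \le 1$.

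\textbf{Forward direction.} Suppose $f$ is a Smith immersion of full rank. Then $\lambda>0$ everywhere, so $f$ is an immersion.
\begin{enumerate}[(i)]
\item Since $f^*g_X=\lambda^2 g_\Sigma$, for an oriented $g_\Sigma$-orthonormal frame $(e_1,\dots,e_k)$ at $p$, the vectors $u_i=\lambda^{-1}df_p(e_i)$ form an orthonormal frame of the tangent plane $\xi=df_p(T_p\Sigma)$.
\item The second equation gives $\phi(u_1,\dots,u_k)=\lambda^{-k}(f^*\phi)(e_1,\dots,e_k)=1$.
\item Hence $\phi$ restricted to $\xi$ equals the volume form of $\xi$, with the orientation pushed forward from $\Sigma$. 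The induced metric on the image is $f^*g_X$, so this is exactly the statement $\phi|_{f(\Sigma)}=\vol_{f(\Sigma)}$.
\item If one prefers to phrase things via $\iota$, note that the volume form of $(\Sigma,f^*g_X)$ is $\lambda^k\vol_\Sigma$, which equals $f^*\phi$.
\end{enumerate}
So $f(\Sigma)$ is an immersed $\phi$-calibrated submanifold.

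\textbf{Converse direction.} Let $\iota$ be an immersed calibrated submanifold, equipped with $g_\Sigma=\iota^*g_X$ and the orientation for which $\iota^*\phi=\vol_\Sigma$.
\begin{enumerate}[(i)]
\item Take $\lambda\equiv 1$. Then $\iota^*g_X=g_\Sigma=\lambda^2 g_\Sigma$ holds trivially.
\item The calibration condition says $\iota^*\phi=\vol_\Sigma=\lambda^k\vol_\Sigma$.
\item Thus $\iota$ is a Smith immersion, and it has full rank because it is an immersion.
\end{enumerate}
One could instead check the single-equation characterization $f^*\phi=(|df|/\sqrt k)^k\vol_\Sigma$ from \cite{IK}, but this is unnecessary.

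The only point needing care, and it is minor, is the consistency of orientations. We must check that the orientation making $\iota^*\phi$ a positive multiple of $\vol_\Sigma$ matches the orientation of $\Sigma$ used in the definition of a Smith immersion. In the forward direction, the second Smith equation forces $\phi>0$ on the pushed-forward oriented frame, which settles this. In the converse direction, we are free to choose the orientation of $\Sigma$ so that $\iota^*\phi=\vol_\Sigma$, since $\iota^*\phi$ is nowhere vanishing on a calibrated submanifold. Nothing beyond linear algebra is required.
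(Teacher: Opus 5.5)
Your argument is correct: rescaling an oriented $g_\Sigma$-orthonormal frame by $\lambda^{-1}$ gives an orthonormal frame of $df_p(T_p\Sigma)$ on which $\phi$ evaluates to $1$ (equivalently, $f^*\phi=\lambda^k\vol_\Sigma=\vol_{f^*g_X}$). Conversely, $\lambda\equiv 1$ works for $\iota$, with the orientation fixed by $\iota^*\phi=\vol_\Sigma$. The comass bound you recall is never actually used.

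The paper gives no proof of its own here; it simply cites \cite{CKM}. Your pointwise linear-algebra check is the standard verification behind that citation. The only thing it leaves aside is regularity: a $C^1$ Smith immersion yields, a priori, only a $C^1$ immersed calibrated submanifold. Smoothness would come from the elliptic bootstrapping on $\{df\neq 0\}$ described in Remark~\ref{rmk:bochnersmooth}, but the paper's definition of calibrated submanifold does not require you to address this.
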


\indent It is a remarkable fact that every Smith immersion is a $k$-harmonic map.  Let us briefly recall this concept.

\begin{defn}
Let $f \colon (\Sigma^k, g_\Sigma) \to (X^n, g_X)$ be a $C^1$ map between Riemannian manifolds.
\begin{itemize}
\item The \emph{$k$-tension of $f$}, denoted $\tau_k(f)$, is the section of $f^*TX$ defined by
$$\tau_k(f) = \mathrm{div}(|df|^{k-2}df).$$
We say $f$ is \emph{$k$-harmonic} if $\tau_k(f) = 0$ in the weak sense.
\item If $\Sigma$ is compact, the \emph{$k$-energy of $f$} is defined by
$$E_k(f) = \frac{1}{ (\sqrt{k})^k}\int_\Sigma |df|^k\,\vol_\Sigma.$$
It is well-known that the critical points of the $k$-energy functional are precisely the $k$-harmonic maps.
\end{itemize}
\end{defn}

\indent It turns out that every $C^1$ map $f \colon (\Sigma^k, g_\Sigma) \to (X^n, g_X, \phi)$, where $\phi \in \Omega^k(X)$, satisfies the following energy inequality~\cite{CKM, IP}:
\begin{equation} \label{eq:EnergyInequality}
\left(\frac{1}{\sqrt{k}}|df|\right)^k \vol_\Sigma \geq f^*\phi.
\end{equation}
Moreover, equality holds if and only if $f$ is a Smith immersion.  These considerations lead to Smith's theorem, which is a conformal mapping analogue of the fundamental theorem of calibrations.

\begin{thm}[Smith's theorem~\cite{Smith}]
If $f \colon (\Sigma^k, g_\Sigma) \to (X^n, g_X, \phi)$ is a Smith immersion, then $f$ is a $k$-harmonic map.  Moreover, if $\Sigma$ is compact and without boundary, then $f$ is $k$-energy minimizing in its homology class.
\end{thm}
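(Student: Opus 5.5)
We prove Smith's theorem using the energy inequality \eqref{eq:EnergyInequality} together with the closedness of $\phi$. We argue minimality first, since $k$-harmonicity will then follow from the first-variation characterization.

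\emph{Step 1: energy minimization in a homology class.} Let $\Sigma$ be compact without boundary, and let $h \colon \Sigma \to X$ be any $C^1$ map homotopic to $f$. More generally, let $h$ be any map with $h_*[\Sigma] = f_*[\Sigma]$ in $H_k(X;\R)$. Integrating \eqref{eq:EnergyInequality} over $\Sigma$ gives
\begin{equation*}
E_k(h) = \int_\Sigma \left(\frac{1}{\sqrt{k}}|dh|\right)^k \vol_\Sigma \geq \int_\Sigma h^*\phi = \langle [\phi], h_*[\Sigma]\rangle .
\end{equation*}
The last equality uses $d\phi = 0$, so that the integral depends only on the homology class $h_*[\Sigma]$. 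Applying the same reasoning to $f$, where equality holds pointwise in \eqref{eq:EnergyInequality} because $f$ is Smith, yields
\begin{equation*}
E_k(f) = \langle [\phi], f_*[\Sigma]\rangle = \langle [\phi], h_*[\Sigma]\rangle \leq E_k(h).
\end{equation*}
Hence $f$ minimizes the $k$-energy in its homology class. For homotopic maps, one can instead use Stokes' theorem on $\Sigma\times[0,1]$ with $H^*\phi$, where $H$ is the homotopy.

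\emph{Step 2: $k$-harmonicity on arbitrary $\Sigma$.} Harmonicity is a local weak condition, so it suffices to test $\tau_k(f)=0$ against compactly supported variations. Let $\Omega \Subset \Sigma$ be a compact domain with smooth boundary, and let $f_t$ be a $C^1$ variation with $f_t = f$ near $\partial\Omega$. We can take $f_t = \exp_f(tV)$ with $V$ compactly supported in $\Omega$. Because $f_t$ and $f$ agree near $\partial \Omega$, the homotopy $F(x,t)=f_t(x)$ together with Stokes' theorem and $d\phi=0$ gives
\begin{equation*}
\int_\Omega f_t^*\phi = \int_\Omega f^*\phi .
\end{equation*}
Indeed, the difference equals $\int_{\partial\Omega\times[0,t]} F^*\phi$, and this vanishes because $F$ is independent of $t$ there, so $F^*\phi$ has no $dt$ component along that boundary piece. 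Applying the energy inequality on $\Omega$ then gives
\begin{equation*}
E_k(f_t;\Omega) \geq \int_\Omega f_t^*\phi = \int_\Omega f^*\phi = E_k(f;\Omega).
\end{equation*}
Thus $t=0$ minimizes $t\mapsto E_k(f_t;\Omega)$. This function is differentiable because $|\cdot|^k$ is $C^1$ for $k\ge 2$. Its derivative at $0$ is therefore zero, and that derivative is exactly $-\int_\Omega \langle \tau_k(f), V\rangle$ in the weak sense. So $f$ is weakly $k$-harmonic.

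\emph{Main obstacle.} The delicate point is regularity: $f$ is only $C^1$. We must justify differentiating $E_k(f_t)$ under the integral and applying Stokes' theorem to the merely $C^0$ form $F^*\phi$ on $\Omega\times[0,1]$. We would handle this by noting that $F$ is $C^1$, so $F^*\phi$ is a continuous $k$-form. Its exterior derivative $F^*d\phi = 0$ holds weakly, which can be justified by mollifying $F$ and passing to the limit in the integrals. Differentiation under the integral is fine because $|df_t|^k$ is $C^1$ in $t$ with uniformly bounded derivative on the compact set $\Omega$.
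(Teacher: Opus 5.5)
Your proof is correct and follows the route the paper indicates. The paper does not reprove Smith's theorem; it cites \cite{Smith} and presents it as a consequence of the energy inequality \eqref{eq:EnergyInequality} (with equality exactly for Smith immersions) together with $d\phi = 0$. You use exactly these two ingredients: globally via the homological invariance of $\int_\Sigma h^*\phi$ for minimality, and locally via Stokes for compactly supported variations to get weak $k$-harmonicity. Take the variation for $t$ in an open interval around $0$, so that the minimum at $t=0$ forces the first variation to vanish; your mollification remark adequately justifies Stokes for the merely continuous form $F^*\phi$.
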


\subsection{Hyperbolicity in calibrated geometry} \label{sub:Hyp-Prelim}

\indent \indent We now discuss the notions of $R_\phi$-, $K_\phi$-, and $\phi$-hyperbolicity.

\begin{defn}[\cite{BIM}]
Let $(X, g_X, \phi)$ be a calibrated $n$-manifold, $\phi \in \Omega^k(X)$.  Equip the $k$-ball $B^k$ with the Poincar\'{e} metric $g_1$, and let $\mathrm{SmIm}(B^k, X) \subset C^1(B^k, X)$ denote the family of Smith immersions $f \colon (B^k, g_1) \to (X, g_X, \phi)$.
\begin{itemize}
\item The \emph{KR $\phi$-metric} $K_X \colon TX \to [0,\infty]$ is
$$K_X(v_p) = \inf\!\left\{ a > 0 \colon \exists f \in \mathrm{SmIm}(B^k, X) \text{ s.t. } f(0) = p,\, df_0(e_1) = \frac{1}{a}v\right\}\!.$$
Note that if there does not exist a Smith immersion $f \colon B^k \to X$ having $f(0) = p$ and $df_0(e_1) = bv_p$ for some $b > 0$, then $K_X(v_p) = \infty$.
\item We say that $X$ is \emph{$R_\phi$-hyperbolic} if for every $p \in X$, there exists a neighborhood $U \subset X$ of $p$, and a constant $c > 0$, such that $K_X(v) \geq c|v|$ for all $v \in TU$.
\item We say that $X$ is \emph{$\phi$-replete} if $K_X \colon TX \to [0,\infty]$ is upper semicontinuous.
\end{itemize}
\end{defn}

\begin{example}[K\"{a}hler manifolds]
Let $(X, g_X, \omega)$ be a K\"{a}hler manifold.  In this context, the KR $\omega$-metric is simply the Kobayashi-Royden pseudo-metric, and $R_\omega$-hyperbolicity is exactly Royden's criterion for Kobayashi hyperbolicity~\cite{Royden}.  Moreover, every K\"{a}hler manifold is $\omega$-replete~\cite{Royden}.
\end{example}

\indent Intuitively, $K_X(v_p)$ is the reciprocal of the largest possible conformal factor of a Smith $k$-disk having $v \in T_pX$ as a tangent vector.  In \cite{BIM}, it is shown that $K_X$ satisfies a ``decreasing property" analogous to that enjoyed by the Kobayashi-Royden pseudo-metric.  In general, $K_X$ is difficult to calculate explicitly, even in the K\"{a}hler case.

\begin{prop} \label{claim:repletness}
Let $(X,g_X,\phi)$ be calibrated. If $K_X$ is upper semi-continuous, then it is finite.
\end{prop}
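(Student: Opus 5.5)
Since $K_X$ takes values in $[0,\infty]$ and upper semicontinuity means that each sublevel set $\{v \in TX \colon K_X(v) < c\}$ is open, the plan is to show that the set where $K_X$ is finite is nonempty, open, and closed in $TX$; connectedness of $TX$ (as $X$ is connected) then finishes the argument. Openness is immediate: $\{K_X < \infty\} = \bigcup_{c>0}\{K_X < c\}$ is a union of open sets. For nonemptiness, note that the zero vector $0_p \in T_pX$ always has $K_X(0_p) = 0$, since the constant map $f \equiv p$ is a Smith immersion (with $\lambda \equiv 0$) and satisfies $df_0(e_1) = 0 = \frac{1}{a}\,0_p$ for every $a > 0$. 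So the zero section lies in $\{K_X < \infty\}$.

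\textbf{Homogeneity.} Before proving closedness, I will establish that $K_X(tv) = |t|\,K_X(v)$ for $t \neq 0$. For $t > 0$ this follows by precomposing with a dilation: if $f$ realizes $df_0(e_1) = \frac{1}{a}v$, then $df_0(e_1) = \frac{t}{a}\cdot\frac{1}{t}\cdot v$, so $tv$ is realized with constant $a t$; more simply, the definition scales linearly in $v$. For $t<0$ I precompose $f$ with an orientation-preserving isometry $R$ of $(B^k, g_1)$ fixing $0$ and sending $e_1 \mapsto -e_1$; for $k \ge 2$ such an $R$ exists, for instance a rotation by $\pi$ in the $e_1e_2$-plane. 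Since $f\circ R$ is again a Smith immersion (because $R^*\vol = \vol$ and $R^*g_1 = g_1$), we get $K_X(-v) = K_X(v)$. The set $\{K_X<\infty\}$ is therefore a cone.

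\textbf{Closedness, the main step.} Suppose instead that $K_X(v_p) = \infty$ for some $v_p \neq 0$. By homogeneity, $K_X(s v_p) = \infty$ for every $s > 0$. As $s \to 0^+$, we have $s v_p \to 0_p$ in $TX$, while $K_X(0_p) = 0$. This does not directly contradict upper semicontinuity, since upper semicontinuity only constrains limsups at the limit point. So I will reverse the roles: take $w$ near $v_p$ with $K_X(w)<\infty$ and try to force finiteness at $v_p$. The cleaner route is a direct use of upper semicontinuity at $0_p$. There is a neighborhood $W$ of $0_p$ in $TX$ on which $K_X < 1$. This $W$ contains $s v_p$ for some small $s>0$, so $K_X(s v_p) < 1$, and hence $K_X(v_p) = K_X(sv_p)/s < \infty$. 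This contradicts $K_X(v_p)=\infty$, so $K_X$ is finite everywhere.

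So the whole proof reduces to two facts: $K_X(0_p)=0$, and the positive homogeneity $K_X(sv)=sK_X(v)$. Connectedness is not even needed. The only step needing care is homogeneity. If the definition were interpreted as requiring $a>0$ with $v \neq 0$, I would verify directly that $f$ witnessing $a$ for $v$ also witnesses $sa$ for $sv$, since $\frac{1}{sa}(sv)=\frac{1}{a}v$. This makes the obstacle essentially trivial. The one remaining check is that the definition indeed gives $K_X(0_p)=0$ via the constant map.
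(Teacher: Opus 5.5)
Your final argument is exactly the paper's proof: use upper semicontinuity at the zero vector $0_p$ (where $K_X(0_p)=0$) to get $K_X(sv_p)<1$ for small $s>0$, then rescale using $K_X(sv)=sK_X(v)$; the paper takes $s=\varepsilon/(2|v_p|)$. The open/closed/connectedness framing and the reflection argument giving $K_X(-v)=K_X(v)$ are unnecessary and can be dropped.
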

\begin{proof}
Suppose $K_X$ is upper semicontinuous.  Clearly, $K_X(0_p) = 0$ is finite, so consider $(p,v_p) \in TX$ with $v_p \neq 0_p$.  By upper semicontinuity, the set $K_X^{-1}([-\infty,1))$ is open, and clearly $(p,0_p)$ is in this set. Hence, there exists $\varepsilon > 0$ such that
$$\{(q,w_q) \in TX: \mathrm{dist}_g(p,q) < \varepsilon, |w_q| < \varepsilon\} \subset K_X^{-1}([-\infty,1)). $$
In particular, $\tilde{v}_p := \frac{\varepsilon}{2} \frac{v_p}{|v_p|}$ has norm $|\tilde{v}_p| = \frac{\varepsilon}{2} < \varepsilon$, and so $K_X(\tilde{v}_p) < 1$.  Therefore,
$$ K_X(v_p) = K_X \big(\tfrac{2 |v_p|}{\varepsilon} \tilde{v}_p \big) = \frac{2 |v_p|}{\varepsilon} K_X(\tilde{v}_p) < \frac{2 |v_p|}{\varepsilon}, $$
which is finite.
\end{proof}

\begin{defn}[\cite{BIM}]
Suppose $X$ is $\phi$-replete, so that $K_X \colon TX \to [0,\infty)$ is upper semicontinuous.  For any $C^1$ curve $\gamma \colon [0,1] \to X$, the composition $K_X \circ \gamma' \colon [0,1] \to [0,\infty)$ is upper semicontinuous on $[0,1]$, and hence bounded and measurable.
\begin{itemize}
\item The \emph{Kobayashi $\phi$-pseudo-distance} is $d_\phi \colon X \times X \to [0,\infty]$ given by
$$d_\phi(p,q) = \inf_\gamma \int_0^1 K_X(\gamma'(t))\,dt,$$
where the infimum is taken over all piecewise-smooth paths $\gamma \colon [0,1] \to X$ having $\gamma(0) = p$ and $\gamma(1) = q$.
\item We say that $X$ is \emph{$K_\phi$-hyperbolic} if $d_\phi$ is non-degenerate (i.e., $p,q \in X$ with $p \neq q$ implies $d_\phi(p,q) > 0$).
\end{itemize}
\end{defn}

\begin{rmk}
Repleteness guarantees that the integrand $t \mapsto K_X(\gamma'(t))$ is bounded and measurable.  However, weaker regularity conditions on $K_X$ should also suffice for this purpose.
\end{rmk}

\indent Suppose that $(X, g_X, \phi)$ is $\phi$-replete and $R_\phi$-hyperbolic.  Fix $p, q \in X$ with $p \neq q$, and choose $r \in (0, \frac{1}{2}\mathrm{dist}_g(p,q))$.  By definition, there exists a neighborhood $U \subset X$ of $p \in X$ and a constant $c > 0$ such that $K_X(v) \geq c|v|$ for all $v \in TU$.  A short argument then shows that $d_\phi(p,q) \geq cr > 0$, and hence $X$ is $K_\phi$-hyperbolic.  In other words:

\begin{thm}[\cite{BIM}] \label{thm:R-Implies-K} Suppose $(X, g_X, \phi)$ is $\phi$-replete.  If $X$ is $R_\phi$-hyperbolic, then $X$ is $K_\phi$-hyperbolic.
\end{thm}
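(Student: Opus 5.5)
The plan is to make rigorous the ``short argument'' sketched just before the statement. Fix $p \neq q$ in $X$. By $R_\phi$-hyperbolicity at $p$ there are a neighborhood $U$ of $p$ and a constant $c>0$ with $K_X(v) \geq c|v|$ for all $v \in TU$. Shrinking $U$ loses nothing, so choose $r>0$ with $r < \tfrac12 \mathrm{dist}_g(p,q)$ and with the closed metric ball $\overline{B_r(p)}$ contained in $U$. This is possible because $U$ is open and, for small radii, metric balls are small neighborhoods of $p$. We will then show that $d_\phi(p,q) \geq cr$. Since $p\neq q$ are arbitrary, this gives non-degeneracy, i.e.\ $K_\phi$-hyperbolicity.

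Let $\gamma \colon [0,1] \to X$ be any piecewise-smooth path from $p$ to $q$. Repleteness enters here: by Proposition~\ref{claim:repletness} and the remarks following the definition of $d_\phi$, the function $t \mapsto K_X(\gamma'(t))$ is finite, bounded and measurable on each smooth piece. Hence $\int_0^1 K_X(\gamma'(t))\,dt$ is a well-defined Lebesgue integral and monotonicity of the integral is available.

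Since $\mathrm{dist}_g(p,\gamma(1)) = \mathrm{dist}_g(p,q) > 2r > r$, continuity of $t \mapsto \mathrm{dist}_g(p,\gamma(t))$ gives a first time $t_0 \in (0,1)$ with $\mathrm{dist}_g(p,\gamma(t_0)) = r$. Then $\gamma([0,t_0]) \subset \overline{B_r(p)} \subset U$, so $K_X(\gamma'(t)) \geq c|\gamma'(t)|$ for almost every $t \in [0,t_0]$. Therefore
$$\int_0^1 K_X(\gamma'(t))\,dt \;\geq\; \int_0^{t_0} K_X(\gamma'(t))\,dt \;\geq\; c\int_0^{t_0}|\gamma'(t)|\,dt \;\geq\; c\,\mathrm{dist}_g(p,\gamma(t_0)) = cr.$$
The first inequality uses $K_X \geq 0$. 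The last uses that Riemannian length bounds distance from below. Taking the infimum over $\gamma$ gives $d_\phi(p,q) \geq cr > 0$.

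The only delicate points are bookkeeping rather than genuine obstacles. One is ensuring that the path segment up to its first exit from $B_r(p)$ stays inside the region where the lower bound $K_X \geq c|\cdot|$ holds; this is why the closed ball is placed inside $U$. The other is ensuring the integral makes sense, which is exactly where $\phi$-repleteness is used. The lower bound only needs to hold on a neighborhood of $p$, not of $q$.
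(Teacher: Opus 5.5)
Your argument is correct and is precisely the ``short argument'' the paper alludes to: you bound the $K_X$-length of any path from $p$ to $q$ below by $c$ times the Riemannian length of its initial segment up to its first exit from $B_r(p)$, which gives $d_\phi(p,q)\geq cr>0$. Your choice of $r$ small enough that $\overline{B_r(p)}\subset U$ is needed for the bound $cr$ to hold, and the paper's sketch leaves it implicit by only requiring $r<\tfrac12\mathrm{dist}_g(p,q)$.
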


\indent Finally, we briefly recall the calibrated analogue of Brody hyperbolicity.

\begin{defn}[\cite{BIM}]
Let $(X, g_X, \phi)$ be a calibrated manifold with $\phi \in \Omega^k(X)$.  We say that $X$ is \emph{$\phi$-hyperbolic} if every Smith immersion $f \colon \R^k \to X$ is constant, where here $\R^k$ carries the flat metric.
\end{defn}

\begin{rmk}
When $(X, g_X, \omega)$ is a K\"{a}hler manifold, $\omega$-hyperbolicity is exactly Brody hyperbolicity.  As is well-known, many classical theorems in complex analysis can be phrased in this language.  For example, the Little Picard Theorem asserts that every open subset $U \subset \C \setminus \{\text{two points}\}$ is Brody hyperbolic.  In particular, every bounded domain in $\C$ is Brody hyperbolic (Liouville's theorem).
\end{rmk}

In \cite{BIM}, it was shown that $R_\phi$-hyperbolicity implies $\phi$-hyperbolicity.  In fact, if $X$ is $\phi$-replete, then $K_\phi$-hyperbolicity implies $\phi$-hyperbolicity.  For both of these implications, the converse is false without further assumptions.

\subsection{Product spaces} \label{sub:Products}

\indent \indent We now turn to the hyperbolicity of product manifolds.  The material in this section is new.  Throughout, we let $(X^m, g_X, \alpha)$ and $(Y^n, g_Y, \beta)$ be calibrated manifolds equipped with calibrations of the same degree $k \geq 2$, i.e.:
$$k = \deg(\alpha) = \deg(\beta) \geq 2.$$
We always equip the Riemannian product $(X \times Y, g_X \oplus g_Y)$ with the $k$-form $\phi = \pi_X^*\alpha + \pi_Y^*\beta \in \Omega^k(X \times Y)$.  In fact, $\phi$ is a calibration:

\begin{prop}\label{prop:product}
If $k \geq 2$, then $\phi = \pi_X^* \alpha + \pi_Y^* \beta$ is a calibration on $(X \times Y, g_X \oplus g_Y)$.
\end{prop}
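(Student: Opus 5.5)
We will check the two defining properties of a calibration separately. Closedness is immediate: since pullback commutes with $d$, we have $d\phi = \pi_X^*(d\alpha) + \pi_Y^*(d\beta) = 0$. So the content of Proposition \ref{prop:product} is that $\phi$ has comass one. This is a pointwise linear-algebra statement at each $(x,y) \in X \times Y$, in the orthogonal sum $V \oplus W$ with $V = T_xX$ and $W = T_yY$. We must show two things:
\begin{itemize}
\item for every oriented $k$-plane $P \subset V \oplus W$ with oriented orthonormal basis $e_1, \ldots, e_k$, we have $\phi(e_1, \ldots, e_k) \leq 1$;
\item equality is attained by some $P$.
\end{itemize}

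Attainment is easy. Since $\alpha$ has comass one, there is a $k$-plane in $V$ with $\alpha(e_1,\ldots,e_k) = 1$. Regard it as a plane in $V \oplus 0$. Then $\pi_Y^*\beta$ vanishes on it, because $d\pi_Y$ kills each $e_i$. Hence $\phi(e_1,\ldots,e_k) = 1$ on this plane.

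For the upper bound, the key step is a good choice of orthonormal basis of $P$. Apply the singular value decomposition to the restriction $\pi_V|_P \colon P \to V$. This gives an orthonormal basis $e_1, \ldots, e_k$ of $P$ whose projections $a_i = \pi_V(e_i)$ are mutually orthogonal. Write $e_i = (a_i, b_i)$ with $b_i = \pi_W(e_i)$. Since $|a_i|^2 + |b_i|^2 = 1$, we may set $|a_i| = \cos\theta_i$ and $|b_i| = \sin\theta_i$ with $\theta_i \in [0,\pi/2]$. The $b_i$ need not be orthogonal, but this does not matter. By the comass bounds on $\alpha$ and $\beta$, followed by Hadamard's inequality,
$$\phi(e_1,\ldots,e_k) = \alpha(a_1,\ldots,a_k) + \beta(b_1,\ldots,b_k) \leq |a_1\wedge\cdots\wedge a_k| + |b_1\wedge\cdots\wedge b_k| \leq \prod_{i=1}^k \cos\theta_i + \prod_{i=1}^k \sin\theta_i.$$
(The first factor is in fact an equality, by orthogonality of the $a_i$.)

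Now we use the hypothesis $k \geq 2$. All factors with $i \geq 3$ lie in $[0,1]$, so the right-hand side is at most
$$\cos\theta_1\cos\theta_2 + \sin\theta_1\sin\theta_2 = \cos(\theta_1 - \theta_2) \leq 1.$$
Combined with attainment, this shows $\phi$ has comass exactly one.

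The main obstacle is the choice of basis in the third step. With an arbitrary orthonormal basis of $P$, the $V$-components are not orthogonal, and bounding the two wedge norms separately gives nothing better than $2$. The singular-value basis is what couples the two products through $\cos^2\theta_i + \sin^2\theta_i = 1$. This is also exactly where $k \geq 2$ is needed: for $k = 1$ the bound would be $\cos\theta + \sin\theta$, which can reach $\sqrt{2}$, so the statement genuinely fails.
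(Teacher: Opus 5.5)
Your proof is correct, but it gets the bound $\operatorname{comass}(\phi)\le 1$ by a different route than the paper. Closedness and attainment are handled the same way in both.

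The paper works with an arbitrary unit simple $k$-vector $\xi=(v_1,w_1)\wedge\cdots\wedge(v_k,w_k)$ and Gram matrices $G_V,G_W$. It bounds $\phi(\xi)\le\sqrt{\det G_V}+\sqrt{\det G_W}$ and then uses two inequalities: the Minkowski determinant inequality $\det(G_V+G_W)^{1/k}\ge(\det G_V)^{1/k}+(\det G_W)^{1/k}$, and the scalar inequality $a^{k/2}+b^{k/2}\le(a+b)^{k/2}$, which is where $k\ge 2$ enters.

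You replace Minkowski by the much lighter Hadamard inequality and reduce everything to the estimate $\prod_i\cos\theta_i+\prod_i\sin\theta_i\le\cos(\theta_1-\theta_2)\le 1$.
\begin{itemize}
\item Your version is more elementary and self-contained. It also makes the equality case easy to read off: for $k\ge 3$, equality forces all $\theta_i=0$ or all $\theta_i=\pi/2$. So a $\phi$-calibrated plane must lie in $T_xX$ or $T_yY$, which is the linear-algebra shadow of Proposition~\ref{prop:products}.
\item The paper's version is shorter once Minkowski is granted and needs no special basis. It isolates the role of $k\ge 2$ as the superadditivity of $t\mapsto t^{k/2}$.
\end{itemize}

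Two of your side remarks are inaccurate, though neither affects the validity of the proof.
\begin{itemize}
\item In the singular-value basis the $b_i$ \emph{are} mutually orthogonal. For $i\neq j$ we have $0=\langle e_i,e_j\rangle=\langle a_i,a_j\rangle+\langle b_i,b_j\rangle=\langle b_i,b_j\rangle$.
\item More importantly, the singular-value basis is not needed at all. Hadamard's inequality $|a_1\wedge\cdots\wedge a_k|\le\prod_i|a_i|$ holds for arbitrary vectors, and $|a_i|^2+|b_i|^2=1$ holds for every orthonormal basis of $P$. So your chain of inequalities goes through unchanged for any orthonormal basis.
\end{itemize}
The step you single out as the main obstacle is therefore not one. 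The coupling between the two products comes from $|e_i|=1$ alone, not from the choice of basis.
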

\begin{proof}
Since $\alpha$ and $\beta$ are closed, $d\phi = \pi_X^* d\alpha + \pi_Y^* d\beta = 0$.  It remains to show that $\phi$ has comass one.
\begin{itemize}
\item{($\operatorname{comass} \leq 1$)}
Let $\xi = (v_1, w_1) \wedge \cdots \wedge (v_k, w_k)$ be a unit simple $k$-vector at $(p, q) \in X \times Y$, where $v_i \in T_p X$ and $w_i \in T_q Y$. Write $G_V = (\langle v_i, v_j \rangle_{g_X})$ and $G_W = (\langle w_i, w_j \rangle_{g_Y})$ for the $k \times k$ Gram matrices. Both are positive semidefinite.  The product metric gives
\[
  |\xi|^2 = \det(G_V + G_W),\qquad
  |v_1 \wedge \cdots \wedge v_k|^2 = \det G_V,\qquad
  |w_1 \wedge \cdots \wedge w_k|^2 = \det G_W.
\]
Therefore,
\begin{align}
\phi(\xi) & = \alpha(v_1 \wedge \cdots \wedge v_k)
               + \beta(w_1 \wedge \cdots \wedge w_k) \notag \\ 
& \leq |v_1 \wedge \cdots \wedge v_k| + |w_1 \wedge \cdots \wedge w_k| \notag \\ 
& = \sqrt{\det G_V} + \sqrt{\det G_W}. \label{eq:bound1}
\end{align}

We now show
\begin{equation}\label{eq:bound2}
  \sqrt{\det G_V} + \sqrt{\det G_W}
  \leq \sqrt{\det(G_V + G_W)} = |\xi| = 1.
\end{equation}
By the Minkowski determinant inequality for positive semidefinite $k \times k$ matrices,
\[
  \bigl(\det(G_V + G_W)\bigr)^{1/k}
  \geq (\det G_V)^{1/k} + (\det G_W)^{1/k}.
\]
Set $a := (\det G_V)^{1/k}$ and $b := (\det G_W)^{1/k}$.  Then
\[
  \sqrt{\det G_V} + \sqrt{\det G_W} = a^{\frac{k}{2}}+b^{\frac{k}{2}} \leq (a+b)^{\frac{k}{2}} \leq \det(G_V + G_W)^{\frac{1}{2}},
\]
where for the first inequality we need $k \geq 2$ and the second is Minkowski.
Combining with~\eqref{eq:bound1} and~\eqref{eq:bound2} gives $\phi(\xi) \leq 1$.

\item{($\operatorname{comass}=1$ attained)}
Consider any point $(p,q)\in X \times Y$. Since $\alpha$ is a calibration, there exists an oriented unit $k$-plane in $T_p X$ with calibrating $k$-vector $\zeta_X = v_1 \wedge \cdots \wedge v_k$ satisfying $\alpha(\zeta_X) = 1$ and $|\zeta_X| = 1$. Setting $\xi = (v_1, 0) \wedge \cdots \wedge (v_k, 0)$, we have $|\xi| = |\zeta_X| = 1$ and $\phi(\xi) = \alpha(\zeta_X) + \beta(0) = 1$. Hence $\operatorname{comass}(\phi) \equiv 1$.
\end{itemize}
\end{proof}

\begin{rmk}[A Counterexample for $k = 1$]
For $k = 1$, the product form $\phi = \pi_X^* \alpha + \pi_Y^* \beta$ need not be a calibration, even when $\alpha$ and $\beta$ are calibrations. \\
\indent  For example, on $X = Y = \mathbb{R}$ with the standard metric, consider the calibrations $\alpha = dx$ and $\beta = dy$, where $x, y$ are the standard coordinates.  On $X \times Y = \mathbb{R}^2$, the product form is $\phi = dx + dy$.  For any unit tangent vector $(a, b) \in \mathbb{R}^2$, the supremum of $\phi(a,b) = a+b$ subject to $|(a,b)| = 1$ is $\sqrt{2}$, attained at $(a,b) = (\frac{1}{\sqrt{2}}, \frac{1}{\sqrt{2}})$.  Thus, $\operatorname{comass} (\phi) = \sqrt{2} > 1$, so $\phi$ is not a calibration.
\end{rmk}

\indent We now show that in degree $k \geq 3$, Smith immersions into product calibrated manifolds are very restricted.

\begin{prop}[Smith immersions into product spaces] \label{prop:products} 
Let $F \colon \Sigma \rightarrow X \times Y$ be a Smith immersion, where $X \times Y$ is equipped with the product metric and calibration. Then the two projections $F_X := \pi_X \circ F$ and $F_Y := \pi_Y \circ F$ are both Smith immersions. Moreover, if $k \geq 3$, and $dF_p \neq 0$, then at least one of $F_X$ or $F_Y$ is constant in a neighborhood of $p$.
\end{prop}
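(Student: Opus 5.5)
The plan is to revisit the chain of inequalities from the proof of Proposition \ref{prop:product} pointwise, applied to the $k$-vector $dF_p(e_1)\wedge\cdots\wedge dF_p(e_k)$, and to analyze when every inequality in it is an equality.

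\textbf{Setup.} Fix $p\in\Sigma$ and an oriented $g_\Sigma$-orthonormal basis $e_1,\dots,e_k$ of $T_p\Sigma$. Write $dF_p(e_i)=(v_i,w_i)$, where $v_i=d(F_X)_p(e_i)$ and $w_i=d(F_Y)_p(e_i)$. Let $G_V,G_W$ be the Gram matrices of the $v_i$ and $w_i$. Since $F$ is weakly conformal with factor $\lambda$, we have $G_V+G_W=\lambda^2 I$.

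\textbf{Equality case.} The Smith equation gives
\[
\lambda^k=\alpha(v_1\wedge\cdots\wedge v_k)+\beta(w_1\wedge\cdots\wedge w_k)\le\sqrt{\det G_V}+\sqrt{\det G_W}\le\sqrt{\det(G_V+G_W)}=\lambda^k,
\]
so every inequality is an equality. If $\lambda(p)=0$, all differentials vanish and there is nothing to prove. Otherwise, equality in the Minkowski determinant inequality forces $G_V$ and $G_W$ to be proportional whenever both are nonsingular. The singular cases need care. If $\det G_V=0$, then $\det G_W=\lambda^{2k}=\det(\lambda^2I-G_V)$, which forces $G_V=0$, because the eigenvalues of $G_W$ are $\lambda^2-\mu_i\le\lambda^2$ with $\mu_i$ the eigenvalues of $G_V$. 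Hence in all cases $G_V=a^2I$ and $G_W=b^2I$ with $a^2+b^2=\lambda^2$, so $F_X$ and $F_Y$ are weakly conformal at $p$ with factors $a,b$. Equality in the first step (the comass bounds) gives
\[
\alpha(v_1\wedge\cdots\wedge v_k)=a^k,\qquad \beta(w_1\wedge\cdots\wedge w_k)=b^k.
\]
This is exactly the Smith condition for $F_X$ and $F_Y$ at $p$, and it holds at every point. Since $F_X,F_Y$ are $C^1$, they are Smith immersions.

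\textbf{The case $k\ge3$.} The middle inequality $a^{k}+b^{k}\le(a^2+b^2)^{k/2}$, which is the step $x^{k/2}+y^{k/2}\le(x+y)^{k/2}$ with $x=a^2$, $y=b^2$, is strict when $k>2$ and both $a,b>0$. Therefore at each point one of $a,b$ vanishes. Now suppose $dF_p\ne0$, so $\lambda>0$ on a connected neighborhood $U$ of $p$. The functions $a^2=|dF_X|^2/k$ and $b^2=|dF_Y|^2/k$ are continuous, satisfy $a^2+b^2=\lambda^2>0$ and $a^2b^2=0$ on $U$. So $U$ is the disjoint union of the two closed sets $\{a=0\}$ and $\{b=0\}$, and connectedness makes one of them all of $U$. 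Then $dF_X\equiv0$ or $dF_Y\equiv0$ on $U$, so that projection is constant near $p$.

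\textbf{Main obstacle.} I expect the main difficulty to be the equality analysis in the Minkowski determinant inequality, specifically handling the singular cases cleanly, so that proportionality $G_V\propto G_W$ (and hence conformality of both projections) genuinely follows. For the nonsingular case I would use the eigenvalue form: diagonalize $G_V$ relative to $G_W$, and apply strict concavity of $\log$, or equivalently AM–GM equality.
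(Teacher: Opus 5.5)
Your proof is correct, but it takes a different route from the paper's.

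The paper never looks at Gram matrices. It applies the energy inequality \eqref{eq:EnergyInequality} separately to $F_X$ and $F_Y$, and combines these with $|dF|^2=|dF_X|^2+|dF_Y|^2$ and the scalar inequality $(a^k+b^k)^2\le(a^2+b^2)^k$. It then reads off that both projections are Smith from the cited fact that equality in \eqref{eq:EnergyInequality} characterizes Smith maps.

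You instead reuse the Minkowski-determinant chain from the proof of Proposition \ref{prop:product}, together with the constraint $G_V+G_W=\lambda^2 I$. From the equality case you extract conformality of each projection directly, and from equality in the comass bounds you extract the calibration condition. In effect you re-derive, in this special situation, the equality characterization that the paper takes as a black box. The paper's version is shorter and needs no equality analysis of Minkowski's inequality; yours is self-contained.

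The obstacle you anticipate is milder than you expect. Since $G_V$ and $G_W=\lambda^2 I-G_V$ commute, they diagonalize simultaneously, with eigenvalues $\mu_i$ and $\lambda^2-\mu_i$. Equality then reduces to AM--GM on the $\mu_i$, which handles the singular cases too. Your separate eigenvalue argument for $\det G_V=0$ is also fine.

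Your $k\ge 3$ step is the same as the paper's. For the local statement, the paper uses openness of $\{dF_X\neq 0\}$ near $p$ rather than your connectedness splitting of $U$; both work.
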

\begin{proof}
Since $F$ is Smith, we have
\begin{equation} \label{eq:eq1}
F^*(\pi_X^{*} \alpha + \pi_Y^{*}\beta) = \frac{|dF|^k}{(\sqrt{k})^k} \mathrm{vol}_\Sigma.
\end{equation}
Also,
\begin{equation} \label{eq:eq2}
 |dF|^2 = |dF_X|^2 + |dF_Y|^2.
\end{equation}
Using the energy inequalities  (\ref{eq:EnergyInequality}) for $F_X$ and $F_Y$, we get
\begin{align}
 F^{*}_X \alpha &\leq \frac{|dF_X|^k}{(\sqrt{k})^k} \mathrm{vol}_\Sigma, & F^{*}_Y \beta &\leq \frac{|dF_Y|^k}{(\sqrt{k})^k} \mathrm{vol}_\Sigma. \label{eq:eq4}
\end{align}
Thus, we obtain the following chain of pointwise inequalities
\begin{align*}
|dF|^k \mathrm{vol}_\Sigma &=(\sqrt{k})^k F^*(\pi_X^{*} \alpha + \pi_Y^{*}\beta) & & \text{(by~\eqref{eq:eq1})} \\
&= (\sqrt{k})^k \left(F^{*}_X \alpha + F^{*}_Y \beta\right) \\
&\leq \left(|dF_X|^k +|dF_Y|^k\right) \mathrm{vol}_\Sigma & & \text{(by~\eqref{eq:eq4})} \\
&\leq \left(\sqrt{|dF_X|^2 +|dF_Y|^2}\right)^k \mathrm{vol}_\Sigma\\
&= |dF|^k \mathrm{vol}_\Sigma, & & \text{(by~\eqref{eq:eq2})}
\end{align*}
where in the second inequality above we used that $(a^k + b^k)^2 \leq (a^2 + b^2)^k$ for $a,b \geq 0$. (This equality is trivial when $k=2$. When $k > 2$, equality occurs if and only if $a=0$ or $b=0$.) For $k\geq2$, the chain of inequalities implies that $F_X$ and $F_Y$ are both Smith.

For $k > 2$, we deduce that at every point of $\Sigma$, at least one of $dF_X$ or $dF_Y$ must vanish. Suppose $dF_p \neq 0$, so exactly one of $dF_X$ or $dF_Y$ vanishes at $p$. Without loss of generality, suppose $(dF_X)_p \neq 0$. Then $dF_X$ is nonzero in a neighborhood of $p$, so $dF_Y$ is zero on that neighborhood.
\end{proof}

We now turn to the hyperbolicity of product manifolds carrying a product calibration of degree $k \geq 3$.  For this, we require the following lemma.

\begin{lem} \label{lem:KXY}
Suppose $k \geq 3$.  Let $v \in TX$ and $w \in TY$.
\begin{enumerate}[(a)]
\item If $v \neq 0$ and $w \neq 0$, then $K_{X \times Y}(v,w) = \infty$.
\item We have $K_{X\times Y}(v,w) \geq \max\{K_X(v),K_Y(w)\}$.
\end{enumerate}
\end{lem}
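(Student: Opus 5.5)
Both parts follow from Proposition~\ref{prop:products} applied to Smith immersions $F \colon B^k \to X \times Y$ centered at $(p,q)$ with $v \in T_pX$, $w \in T_qY$.

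For (a), suppose $v \neq 0$ and $w \neq 0$, and suppose for contradiction that $K_{X\times Y}(v,w) < \infty$. Then there is $F \in \mathrm{SmIm}(B^k, X\times Y)$ with $F(0) = (p,q)$ and $dF_0(e_1) = \frac{1}{a}(v,w)$ for some $a > 0$; in particular $dF_0 \neq 0$. Since $k \geq 3$, Proposition~\ref{prop:products} says that one of $F_X = \pi_X \circ F$ or $F_Y = \pi_Y \circ F$ is constant near $0$. If $F_X$ is constant near $0$, then $d(F_X)_0(e_1) = d\pi_X(dF_0(e_1)) = \frac{1}{a}v = 0$, a contradiction. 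The case where $F_Y$ is constant similarly forces $w = 0$. So no such $F$ exists, the defining set is empty, and $K_{X\times Y}(v,w) = \infty$.

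For (b), if $K_{X\times Y}(v,w) = \infty$ there is nothing to prove. Otherwise, take any admissible $a > 0$ with an $F \in \mathrm{SmIm}(B^k, X\times Y)$ such that $F(0) = (p,q)$ and $dF_0(e_1) = \frac{1}{a}(v,w)$. By the first assertion of Proposition~\ref{prop:products} (valid for all $k \geq 2$), $F_X \colon (B^k, g_1) \to X$ is a Smith immersion with $F_X(0) = p$ and $d(F_X)_0(e_1) = \frac{1}{a}v$. So $a$ lies in the set whose infimum defines $K_X(v)$, giving $K_X(v) \leq a$. Taking the infimum over $a$ yields $K_X(v) \leq K_{X\times Y}(v,w)$, and symmetrically $K_Y(w) \leq K_{X\times Y}(v,w)$.

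There is one subtle point: when $v = 0$ we need $K_X(0) = 0 \leq a$. This holds directly from the definition if constant maps count, and in any case since $K_X(0_p) = 0$ (as noted in the proof of Proposition~\ref{claim:repletness}), the inequality is trivial there. I expect no real obstacle; the main care is this bookkeeping in (a), namely that constancy of a projection near $0$ forces the corresponding component of $dF_0(e_1)$ to vanish.
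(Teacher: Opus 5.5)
Your proof is correct. Part (a) follows the paper's proof: a finite value produces a Smith immersion $F$ with $dF_0(e_1)=\frac{1}{a}(v,w)$, and Proposition~\ref{prop:products} forces one component of this vector to vanish. The paper uses the pointwise fact that $(dF_X)_0=0$ or $(dF_Y)_0=0$, while you use local constancy of a projection. These are interchangeable here because $dF_0\neq 0$.

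For (b) your route differs slightly. The paper splits into cases: if $v,w\neq 0$ it invokes (a), and if, say, $w=0$ it uses the identity $K_{X\times Y}(v,0)=K_X(v)$ together with $K_Y(0)=0$. You instead apply the first half of Proposition~\ref{prop:products} (the projections of a Smith immersion are Smith) to every competitor $F$ at once. Then every admissible $a$ for $(v,w)$ is admissible for $v$ and for $w$. This is exactly the argument behind the inequality $K_{X\times Y}(v,0)\geq K_X(v)$ inside the paper's identity, which the paper states without justification. Your version makes it explicit and avoids the case analysis. It also shows that (b) holds for every $k\geq 2$, since you never use (a).

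The paper's case split records something extra. For $k\geq 3$, $K_{X\times Y}(v,w)$ is $\infty$ unless one component vanishes, and then it equals $K_X(v)$ or $K_Y(w)$. That equality also needs the reverse inequality, obtained from the Smith immersion $f\mapsto (f,q)$, and it is what the paper later uses in part (c) of the products proposition.

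Finally, your ``subtle point'' about $v=0$ is not an issue. $F_X$ itself satisfies $d(F_X)_0(e_1)=0=\frac{1}{a}\cdot 0$, so it already witnesses $a$ in the defining set of $K_X(0_p)$.
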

\begin{proof} ${}$
\begin{enumerate}[(a)]
\item  Let $v \in TX$ and $w \in TY$ be non-zero vectors at $p \in X$ and $q \in Y$, respectively.  Suppose for contradiction that $K_{X \times Y}(v,w) < \infty$.  Then there exists $a > 0$ and a Smith immersion $F \colon B^k \to X \times Y$ with $F(0) = (p,q)$ and $dF_0(e_1) = \frac{1}{a}(v,w)$.  By Proposition~\ref{prop:products}, at least one of $dF_X$ or $dF_Y$ vanishes at $0$.  Without loss of generality, suppose $(dF_X)_0 = 0$.  Then $\frac{1}{a} (v,w) = dF_0(e_1) = ((dF_X)_0, (dF_Y)_0) (e_1) = (0,(dF_Y)_0(e_1))$, which implies that $v = 0$, contrary to assumption.  Thus, $K_{X \times Y}(v,w) = \infty$. 
\item If $v \neq 0$ and $w \neq 0$, then $K_{X \times Y}(v,w) = \infty$, and the result follows.  Next, if $v \neq 0$ and $w = 0$, then
$$ K_{X\times Y}(v,0) = K_X(v) \geq \max\{K_X(v),0\} = \max\{K_X(v),K_Y(0)\}.$$
A similar calculation holds for the case of $v = 0$ and $w \neq 0$.  Finally, if $v = 0$ and $w = 0$, then the result is immediate. \qedhere
\end{enumerate}
\end{proof}

We arrive at the main result of this section.

\begin{prop}[Hyperbolicity of products]
Let $(X^m, g_X, \alpha)$ and $(Y^n, g_Y, \beta)$ be calibrated manifolds equipped with calibrations of the same degree $k \geq 3$.  Equip $(X \times Y, g_X \oplus g_Y)$ with the product calibration $\phi = \pi_X^*\alpha + \pi_Y^*\beta \in \Omega^k(X \times Y)$.
\begin{enumerate}[(a)]
 \item If $X$ is $\alpha$-hyperbolic and $Y$ is $\beta$-hyperbolic, then $X \times Y$ is $\phi$-hyperbolic.
 \item If $X$ is $R_{\alpha}$-hyperbolic and $Y$ is $R_{\beta}$-hyperbolic, then $X \times Y$ is $R_{\phi}$-hyperbolic.
 \item The product $X \times Y$ is \emph{never} $\phi$-replete, and $K_{X \times Y}$ is \emph{never} upper semi-continuous.
 \item If $d_{\alpha}$, $d_{\beta}$ are both non-degenerate on $X$, $Y$ respectively, then $d_{\phi}$ on $X\times Y$ is also non-degenerate.
\end{enumerate}
\end{prop}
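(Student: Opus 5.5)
Each of the four parts reduces to Proposition~\ref{prop:products} and Lemma~\ref{lem:KXY}; we take them in order.

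For (a), let $F \colon \R^k \to X \times Y$ be a Smith immersion. By Proposition~\ref{prop:products}, $F_X$ and $F_Y$ are Smith immersions from flat $\R^k$ into $X$ and $Y$, respectively. By $\alpha$- and $\beta$-hyperbolicity they are constant, so $F = (F_X, F_Y)$ is constant. The case $k \geq 3$ is not even needed here.

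For (b), fix $(p,q)$. Choose neighborhoods $U \ni p$ and $V \ni q$ and constants $c_X, c_Y > 0$ with $K_X(v) \geq c_X|v|$ on $TU$ and $K_Y(w) \geq c_Y |w|$ on $TV$. Set $c = \min\{c_X, c_Y\}$. For $(v,w) \in T(U \times V)$ we split into cases. If both $v,w$ are nonzero, then $K_{X\times Y}(v,w) = \infty$ by Lemma~\ref{lem:KXY}(a). If $w = 0$, then Lemma~\ref{lem:KXY}(b) gives $K_{X\times Y}(v,0) \geq K_X(v) \geq c|v| = c|(v,0)|$. The case $v = 0$ is symmetric. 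Hence $K_{X \times Y} \geq c|\cdot|$ on $T(U\times V)$.

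For (c), we argue by contradiction using Proposition~\ref{claim:repletness}: if $K_{X\times Y}$ were upper semicontinuous, it would be finite everywhere. But Lemma~\ref{lem:KXY}(a) shows $K_{X\times Y}(v,w) = \infty$ whenever $v \neq 0$ and $w \neq 0$, and such vectors exist since $\dim X, \dim Y \geq k \geq 3 > 0$. So $K_{X\times Y}$ is not upper semicontinuous, and $X \times Y$ is not $\phi$-replete.

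For (d), the main obstacle is that $d_\phi$ on $X \times Y$ is not a priori well defined via the measurability afforded by repleteness, and (c) shows repleteness fails. I would interpret the integral $\int_0^1 K_{X\times Y}(\gamma'(t))\,dt$ as an upper or lower integral, or note that only paths with finite length matter. The core estimate is then
\[
\int_0^1 K_{X\times Y}(\gamma') \geq \int_0^1 K_X((\pi_X\circ\gamma)'),
\]
which follows pointwise from Lemma~\ref{lem:KXY}(b). This gives $d_\phi((p,q),(p',q')) \geq d_\alpha(p,p')$, and likewise $d_\phi \geq d_\beta(q,q')$. If $(p,q) \neq (p',q')$, then either $p \neq p'$ or $q \neq q'$, so $d_\phi > 0$ by the non-degeneracy of $d_\alpha$ or $d_\beta$. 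The measurability subtleties must be handled consistently with how $d_\phi$ is defined in the non-replete setting, and the inequality is monotone under whatever convention is chosen (for instance, lower integrals preserve pointwise inequalities).
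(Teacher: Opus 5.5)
Your proposal is correct and follows the paper's proof essentially step for step: (a) via Proposition~\ref{prop:products}, (b) and (d) via the pointwise bound of Lemma~\ref{lem:KXY}(b), and (c) via Lemma~\ref{lem:KXY}(a) together with Proposition~\ref{claim:repletness}; your case split in (b) even yields the constant $c$ rather than the paper's $c/\sqrt{2}$. Your remark that $d_\phi$ on the non-replete product needs an integration convention is a fair point that the paper passes over silently, and monotonicity of whichever integral is chosen is indeed all that (d) requires.
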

\begin{proof} ${}$
\begin{enumerate}[(a)]
\item We prove the contrapositive. Suppose $X \times Y$ is not $\phi$-hyperbolic. Then there exists a non-constant Smith immersion $F \colon\mathbb{R}^k \rightarrow X\times Y$. By Proposition~\ref{prop:products}, both $F_X$ and $F_Y$ are Smith immersions, and they cannot both be constant.  Without loss of generality, suppose $F_X$ is non-constant.  Then $F_X \colon \R^k \to X$ is a non-constant Smith immersion, so $X$ is not $\alpha$-hyperbolic.
\item Since $X$ is $R_{\alpha}$-hyperbolic and $Y$ is $R_{\beta}$-hyperbolic, there exist open sets $U_X \subset TX$ and $U_Y \subset TY$ and $c_X, c_Y > 0$ such that $K_X(v) \geq c_X |v|$ for $v \in U_X$ and $K_Y(w) \geq c_Y |w|$ for $w \in U_Y$. It follows from Lemma \ref{lem:KXY}(b) and these inequalities that
$$ K_{X\times Y}(v,w) \geq \max\{K_X(v),K_Y(w)\} \geq \min\{c_X,c_Y\} \max\{|v|,|w|\} \geq \frac{1}{\sqrt{2}}\min\{c_X,c_Y\} \left|(v,w)\right|$$
on $TU_X \oplus TU_Y$, so $X \times Y$ is $R_{\phi}$-hyperbolic.

\item By Lemma \ref{lem:KXY}(a), $K_{X \times Y}$ is not finite, so $X \times Y$ is not $\phi$-replete.  By Proposition~\ref{claim:repletness}, it follows that $K_{X \times Y}$ is not upper semi-continuous.  Alternatively, one can observe the failure of upper semi-continuity directly as follows.  Noting that $K_{X \times Y}(v,0) = K_X(v)$ and $K_{X \times Y}(0,w) = K_Y(w)$, for any $y>0$, we have
\begin{align*}
 K_{X\times Y}^{-1}([-\infty,y)) & = \{(p,q,v_p,w_q) \in T(X\times Y) : K_{X\times Y}(v_p, w_q) < y\} \\
 & = \big[ \{(p,0_p) \in TX\} \times \{(q,w_q)\in TY : K_Y(w_q) < y\} \big] \\
 & \qquad {} \cup \big[ \{(p,v_p)\in TX : K_X(v_p) < y\} \times \{(q,0_q) \in TY\} \big],
\end{align*}
which is not an open set.

\item Let $(p_1, q_1), (p_2, q_2) \in X \times Y$ have $(p_1,q_1) \neq (p_2,q_2)$. Then using Lemma \ref{lem:KXY}(b), we have
\begin{align*}
 \int_0^1 K_{X\times Y}(\gamma'(t))\,dt &= \int_0^1 K_{X\times Y}(\gamma_X'(t),\gamma_Y'(t))\,dt \\
 &\geq \int_0^1 \max\{K_X(\gamma_X'(t)),K_Y(\gamma_Y'(t))\}\,dt \\
 &\geq \max \left\{ \int_0^1 K_X(\gamma_X'(t))\,dt ,\, \int_0^1 K_Y(\gamma_Y'(t))\,dt \right\}.
\end{align*}
Thus, we obtain
\begin{align*}
 d_{\phi}((p_1,q_1),(p_2,q_2)) & = \inf_{\gamma} \int_0^1 K_{X\times Y}(\gamma'(t))\,dt \\
 &\geq \inf_{\gamma} \max \left\{ \int_0^1 K_X(\gamma_X'(t))\,dt,\, \int_0^1 K_Y(\gamma_Y'(t))\,dt \right\} \\
 &\geq \max \left\{ \inf_{\gamma_X} \int_0^1 K_X(\gamma_X'(t))\,dt,\, \inf_{\gamma_Y} \int_0^1 K_Y(\gamma_Y'(t))\,dt \right\} \\
 &= \max\left\{d_{\alpha}(p_1,p_2),\, d_{\beta}(q_1,q_2)\right\} \\
 &> 0,
\end{align*}
because at least one of $p_1 \neq p_2$ or $q_1 \neq q_2$, and $d_{\alpha}$ and $d_{\beta}$ are both non-degenerate. \qedhere
\end{enumerate}
\end{proof}

\section{Applications of mean-value inequalities} \label{sec:Schwarz}

\indent \indent In \cite{BIM}, the authors proved a Schwarz lemma for Smith immersions $f \colon \Sigma \to X$ assuming certain curvature bounds on the domain and target.  However, for our applications in this work, we need a gradient bound for Smith immersions $f \colon B^k \to X$ that swaps curvature assumptions on $X$ for an equicontinuity-type condition.  Such an estimate is provided by Theorem \ref{thm:SchwarzMain}, which we prove in $\S$\ref{sub:SecondSchwarz} (see Theorem \ref{lem:lemJKS}).  Then, in $\S$\ref{sub:BoundedDomains}, we prove Theorem \ref{thm:BddDomMain} (see Corollary  \ref{cor:bounded-domains-R-hyperbolic}). Our proofs of these two results are similar in spirit; both hinge on a particular mean-value inequality.

\begin{conv} Throughout this section, $B^k$ is equipped with the \emph{flat metric}.
\end{conv}

\subsection{A second Schwarz lemma} \label{sub:SecondSchwarz}

\indent \indent The following mean value inequality was proved in ~\cite[Theorem 4.9]{CKM}.  Although the statement in \cite{CKM} concerned associative calibrations in $\mathrm{G}_2$ geometry, it is now understood (see the discussion in~\cite[Section 5.1]{IK}) that the proof applies to all calibrated manifolds.

\begin{thm}[Mean Value Inequality for Smith immersions \cite{CKM}] \label{thrm:mvt}
Let $(X, g_X, \phi)$ be a calibrated manifold.  Then there exists $C > 0$ and $\varepsilon > 0$ such that every Smith immersion $u:B^k \rightarrow X$ with
\begin{equation*}
    \int_{B^k} |du|^k < \varepsilon
\end{equation*}
satisfies
\begin{equation*}
    \sup_{x \in B_{\frac{1}{2}}} |du(x)|^k \leq C \int_{B^k} |du|^k.
\end{equation*}
\end{thm}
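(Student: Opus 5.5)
The plan is to run an $\varepsilon$-regularity argument in the style of the Heinz trick for $J$-holomorphic curves. The geometric input is that a Smith immersion of full rank parametrizes a calibrated, hence minimal, submanifold conformally, so the Gauss equation bounds its intrinsic curvature. The analytic input is that, since $u$ is conformal, this curvature bound becomes a semilinear differential inequality for a power of the conformal factor $\lambda = |du|/\sqrt{k}$.

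\textbf{Step 1: the subsolution inequality.} I will work locally, where the image of $u$ lies in a region of $X$ with sectional curvature bounded above by some $K_0$. The constants must not depend on $u$. Since $u(B^k)$ has small energy and hence small area, I expect compactness of a neighbourhood (or bounded geometry, as in \cite{CKM}) to supply a uniform $K_0$; this needs checking. On the open set $\{\lambda>0\}$, the induced metric is $u^*g_X = e^{2w}g_0$ with $w=\log\lambda$. The image is minimal, so the Gauss equation gives scalar curvature $R \le k(k-1)K_0$. The conformal-change formula
$$R = -e^{-2w}(k-1)\bigl(2\Delta w + (k-2)|\nabla w|^2\bigr)$$
then yields $\Delta w + \tfrac{k-2}{2}|\nabla w|^2 \ge -C e^{2w}$.
\begin{itemize}
\item For $k\ge 3$, set $v=\lambda^{(k-2)/2}$. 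This gives $\Delta v \ge -C v^{1+\frac{4}{k-2}} = -C\,\lambda^2 v$, a Yamabe-type subsolution inequality with $v^{2k/(k-2)} = \lambda^k$.
\item For $k=2$, the inequality reads $\Delta \log\lambda \ge -C\lambda^2$, and hence $\Delta \lambda^2 \ge -2C\lambda^4$.
\end{itemize}

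\textbf{Step 2: the zero set.} Next I extend the inequality across $\{\lambda = 0\}$ in the weak sense. I expect this to be the main obstacle, because $u$ is only assumed $C^1$. The first thing to try is to test against $\eta\,\psi_\delta(v)$, where $\psi_\delta$ is a cutoff vanishing for $v<\delta$, and let $\delta\to 0$. The boundary terms then have a favourable sign because $v\ge 0$, as in Kato's inequality. If that fails, the fallback is to use the $k$-harmonicity from Smith's theorem together with known regularity of calibrated currents to show the critical set has zero capacity.

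\textbf{Step 3: Moser iteration with small energy.} Once $\Delta v \ge -V v$ holds weakly, with potential $V = C\lambda^2$ satisfying $\|V\|_{L^{k/2}(B^k)}^{k/2} \lesssim \int_{B^k}|du|^k < \varepsilon$, I run Moser iteration on shrinking balls. The smallness of $\|V\|_{L^{k/2}}$ lets the potential term be absorbed by the Sobolev inequality at each step. This gives
$$\sup_{B_{1/2}} v^{2k/(k-2)} \le C\int_{B^k} v^{2k/(k-2)},$$
which is exactly $\sup_{B_{1/2}}|du|^k \le C\int_{B^k}|du|^k$.

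For $k=2$, I would instead use the Heinz trick. Maximize $(1-|x|)^2\lambda^2(x)$, rescale about the maximum point, and apply the mean value property for the subharmonic-type function $\lambda^2$. Small energy rules out the bad alternative in the dichotomy, and this yields the same estimate.
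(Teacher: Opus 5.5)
Your argument is correct in substance, but it takes a different route from the one the paper relies on. The paper does not prove this statement. It imports it from \cite{CKM}, and the only argument of this type it actually carries out is the flat-target Proposition~\ref{cor:mvt}. There the differential inequality comes from the Bochner formula for weakly conformal $k$-harmonic maps (Theorem~\ref{cite:Bochner}). In a curved target that formula gives $\Delta|du|^k \geq -C|du|^{k+2}$ on $\{du \neq 0\}$, after which one extends across the critical set and runs a small-energy argument.

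You instead use only minimality of the calibrated image. The Gauss equation and the conformal-change formula for scalar curvature turn this into the Yamabe-type subsolution $\Delta v \geq -C\lambda^2 v$, whose critical potential has $L^{k/2}$-norm controlled by the energy. The geometric input is the same in both routes: an upper bound on $\sum \mathrm{Sec}_X$ over tangent planes of the image. In fact your inequality for $w=\log\lambda$ gives $\Delta\lambda^p \geq -pC\lambda^{p+2}$ for every $p \geq \frac{k-2}{2}$, in particular for $p=k$. So you could also run the Heinz trick uniformly in $k$ and drop the separate $k=2$ case.

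What your version buys is transparency. It shows that only $\sup \mathrm{Sec}_X$ enters, and it reduces everything to standard $\varepsilon$-regularity for Yamabe-type subsolutions. The Bochner route works directly with $|du|^k$ for all $k \geq 2$ at once, with no change of variable. Your Step~2 truncation by $(v-\delta)^+$ is the right device; it is the same idea as the harmonic-comparison argument in Proposition~\ref{cor:mvt}. The capacity fallback is unnecessary, and smoothness on $\{\lambda>0\}$ is supplied by Remark~\ref{rmk:bochnersmooth}.

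\textbf{First point to tighten: the Moser iteration.} A fixed $\varepsilon$ does not let you absorb the potential at \emph{every} step, because the absorption constant grows linearly in the exponent. The fix is to absorb only once:
\begin{itemize}
\item Test with $\eta^2 v^{(k+2)/(k-2)}$ to get $\Vert v\Vert_{L^q(B_{3/4})} \leq C\Vert v\Vert_{L^{2k/(k-2)}(B_1)}$ with $q = 2k^2/(k-2)^2$.
\item Then $V = Cv^{4/(k-2)} \in L^s$ with $s = k^2/(2(k-2)) > k/2$.
\item Standard subcritical Moser iteration now gives $\sup_{B_{1/2}} v \leq C\Vert v\Vert_{L^{2k/(k-2)}(B_1)}$, which is the linear estimate you want.
\end{itemize}

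\textbf{Second point: uniformity of the constants.} Your worry is real, and ``small area'' does not settle it, because small energy does not confine $u(B^k)$ to a fixed compact set. In fact constants depending only on $X$ cannot exist for arbitrary calibrated manifolds. Take $\C^2$ blown up at the points $(j,0)$, $j \in \mathbb{N}$, with a K\"ahler metric whose exceptional curves $E_j$ have area tending to $0$. The holomorphic disks $z \mapsto [1:Rz] \in E_j$ have energy at most $2\,\mathrm{Area}(E_j)$, yet $|du(0)| \to \infty$ as $R \to \infty$. So the statement must be read with $X$ compact, or at least with $\sup \mathrm{Sec}_X < \infty$. Your argument proves it under exactly that weaker hypothesis, with no localization needed.
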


\begin{rmk} \label{rmk:mvt}
Let $(X, g_X, \phi)$ be calibrated. We can use Theorem~\ref{thrm:mvt} to argue that for the same constants $C > 0$ and $\varepsilon > 0$, if $u \colon B_{\frac{1}{2}} \rightarrow X$ is a Smith immersion with \begin{equation*}
    \int_{B_{\frac{1}{2}}} |du|^k < \varepsilon,
\end{equation*}
then
\begin{equation*}
    |du(0)|^k \leq 2^k C \int_{B_{\frac{1}{2}}} |du|^k.
\end{equation*}
Indeed, given such a Smith immersion $u \colon B_{\frac{1}{2}} \rightarrow X$, if we define $v \colon B^k \rightarrow X$ by rescaling $v(x) = u(\frac{1}{2}x)$, then $v$ is again a Smith immersion with $\int_{B^k} |dv|^k = \int_{B_{\frac{1}{2}}} |du|^k$. So, applying Theorem~\ref{thrm:mvt} to $v$, we get that if
\begin{equation*}
    \int_{B_{\frac{1}{2}}} |du|^k = \int_{B^k} |dv|^k < \varepsilon,
\end{equation*}
then
\begin{equation*}
    |du(0)|^k \leq \sup_{x \in B_{\frac{1}{4}}} |du(x)|^k = 2^k \sup_{x \in B_{\frac{1}{2}}} |dv(x)|^k \leq 2^k C \int_{B^k} |dv|^k = 2^k C \int_{B_{\frac{1}{2}}} |du|^k.
\end{equation*}
\end{rmk}

\begin{thm}[Schwarz Lemma for Smith immersions] \label{lem:lemJKS}
Let $(X, g_X, \phi)$ be calibrated. Then there exists $C > 0$  with the following property. Fix $p \in X$ and $R > 0$. If there exist radii $\delta \in (0,1)$ and $\delta' \in (0,R)$ such that all Smith immersions $f \colon B^k \to X$ with $f(0) \in B_{\delta'}(p)$ satisfy $f(B_\delta(0)) \subset B_R(p)$, then every such $f$ satisfies
$$ \Vert df_0 \Vert \leq  C\frac{R}{\delta}.$$
\end{thm}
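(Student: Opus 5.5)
The plan is to combine an energy bound coming from the calibration with the mean value inequality of Remark~\ref{rmk:mvt}. I argue at the scale of the ball $B_\delta(0)$ in the domain. This produces an $\varepsilon$-regularity estimate of the form $|df_0|^k \lesssim \delta^{-k}\int_{B_{\delta/4}}|df|^k$. The remaining task is to show $\int_{B_{\delta/4}}|df|^k \lesssim R^k$ using only the containment $f(B_\delta)\subset B_R(p)$. Since $\Vert df_0\Vert = |df_0|/\sqrt{k}$ for conformal maps, this gives the claimed bound.

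\textbf{Step 1 (local primitive).} For $R$ below a fixed threshold $R_0$, determined by the geometry of $X$ (injectivity and curvature bounds near $p$), the closed form $\phi$ has a primitive on $B_R(p)$. I would build it by the cone (homotopy) construction in normal coordinates centred at $p$: $\phi = d\psi$ with $\sup_{B_R(p)}|\psi| \le C_1 R$. Since $\phi$ has comass one, $C_1$ depends only on the metric comparison constants.

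\textbf{Step 2 (energy differential inequality).} For $r<\delta$, set $E(r)=\int_{B_r}|df|^k$. Since $f$ is Smith,
$$E(r) = k^{k/2}\int_{B_r} f^*\phi = k^{k/2}\int_{\partial B_r} f^*\psi \le C R\int_{\partial B_r}|df|^{k-1}.$$
Hölder's inequality on $\partial B_r$ then gives
$$E(r)\le C R\, r^{(k-1)/k}\,E'(r)^{(k-1)/k},$$
that is, $E^{k/(k-1)} \le C R^{k/(k-1)}\, r\, E'$. Dividing by $E^{k/(k-1)}$ and integrating $\frac{d}{dr}\big(-E^{-1/(k-1)}\big)\ge c R^{-k/(k-1)} r^{-1}$ over $[\delta/4,\delta/2]$ yields
$$E(\delta/4)\le C_2 R^k,$$
with $C_2$ independent of $\delta$, $f$ and $p$. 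This scale-invariance is exactly what the statement needs.

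\textbf{Step 3 (mean value inequality).} Rescale by setting $u(y)=f(\tfrac{\delta}{2}y)$ on $B_{1/2}$; this preserves the Smith condition and the $k$-energy. If $C_2R^k<\varepsilon$, Remark~\ref{rmk:mvt} gives
$$(\delta/2)^k|df_0|^k = |du(0)|^k\le 2^kC\,C_2R^k,$$
hence $\Vert df_0\Vert\le C R/\delta$.

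\textbf{Main obstacle: large $R$.} The hard part is the regime where $R$ is not small, that is, $R\ge R_0$ or $C_2R^k\ge\varepsilon$. Here Step 1 fails and the smallness hypothesis of the mean value inequality is unavailable. This is where I would use the uniformity hypothesis over all $f$ with $f(0)\in B_{\delta'}(p)$. Precomposing $f$ with conformal automorphisms of the ball, and with the dilations $x\mapsto tx$, again gives Smith immersions, because the Smith equation is conformally invariant in dimension $k$. I would therefore try to pass to a smaller target ball around $p$ at a correspondingly smaller domain scale, so that the ratio $R/\delta$ is preserved up to a constant, and then apply Steps 1--3 there. If this reduction cannot be made uniform, I would fall back on a contradiction and blow-up argument at the centre, using the equicontinuity granted by the hypothesis to rule out concentration of $|df|$ near $0$. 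I expect making the constant genuinely independent of $p$ and $R$ in this regime to be the delicate point.
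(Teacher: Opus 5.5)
Your Steps 1--3 are correct, and they prove what the paper's proof actually proves: the estimate for $R$ below a threshold $R_0$. The paper also writes ``Assume $R>0$ is small enough'' to get normal coordinates, and later ``shrinking $R$ if necessary'' to get below the $\varepsilon$ of Theorem~\ref{thrm:mvt}.

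Your route to the energy bound is genuinely different from the paper's.
\begin{itemize}
\item The paper rescales $u(x)=f(\delta x)$ and tests the $k$-harmonic equation against $\varphi^k(u-u(0))$. Young's inequality then gives the Caccioppoli bound $\int_{B_{1/2}}|du|^k\le C_k\int_{B^k}|u-u(0)|^k\le C_kR^k$. This uses only $k$-harmonicity and the bounded image, and it is the same computation as in Proposition~\ref{prop:bounded-k-harmonic-estimate}.
\item You use the calibration itself. You combine the exact identity $|df|^k\vol=k^{k/2}f^*\phi$, a cone primitive $\psi$ with $|\psi|\lesssim R$, and Stokes on $B_r$. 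This gives the isoperimetric-type inequality $E^{k/(k-1)}\le CR^{k/(k-1)}rE'$, which integrates to the scale-invariant bound $E(\delta/4)\le C_2R^k$. This is the classical way of bounding the energy of holomorphic curves in small balls.
\end{itemize}
Your version needs a local primitive of $\phi$, but it is coordinate-free and exact. It therefore avoids the lower-order Christoffel terms that appear when the $k$-harmonic equation is tested in normal coordinates on a curved target; those terms must then be absorbed using the smallness of $R$.

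Your final paragraph, however, aims at something that cannot be proved. For large $R$ the statement is false, so no reduction through ball automorphisms, dilations, or a blow-up argument can succeed. Here is a counterexample:
\begin{itemize}
\item Take $X=T^n=\R^n/\Z^n$ with the flat metric, $\phi=dx^1\wedge\cdots\wedge dx^k$, and $R>\mathrm{diam}(T^n)$.
\item Then $B_R(p)=X$, so the hypothesis holds for every $\delta\in(0,1)$ and $\delta'\in(0,R)$.
\item Yet $f_\lambda(x)=\pi\big(\tilde p+\lambda(x,0)\big)$, with $\pi(\tilde p)=p$, is a Smith immersion with $\Vert d(f_\lambda)_0\Vert=\lambda\to\infty$.
\end{itemize}
Neither your argument nor the paper's uses the uniformity over all $f$ with $f(0)\in B_{\delta'}(p)$. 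The bound is per map, given $f(B_\delta)\subset B_R(p)$, so that hypothesis cannot help in this regime. The right move is to build $R<R_0(p)$ into the statement and stop after Step 3. This is all that the implication (5)~$\implies$~(1) of Theorem~\ref{theorem:theoremC} needs, since there $R$ can be chosen freely.
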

\begin{proof}
Assume $R>0$ is small enough so that we can work in normal coordinates on $B_R(p) \subset X$, which can be assumed to be geodesically convex, chosen so that the metric $g_X$ satisfies
\begin{align}\label{eq:eq100}
 & \frac{1}{2} \, \mathrm{dist}_{g}(q,w) \leq |q-w| \leq 2 \, \mathrm{dist}_{g}(q,w), & \text{for all $q,w \in B_R(p)$}.
\end{align}

Consider any Smith immersion $f \colon B^k \rightarrow X$ with $f(0) \in B_{\delta^{'}}(p)$ so that by assumption, $f(B_{\delta}(0)) \subset B_R(p)$. Define the rescaled map $u \colon B^k \rightarrow X$ by $u(x) = f(\delta x)$. Then $u(B^k) \subset B_R(p)$, and $u \colon (B^k, g_{0}) \rightarrow B_R(p) \subset (X,g_X,\phi)$ is a Smith immersion and hence, both weakly conformal and $k$-harmonic.

Since $u$ is $k$-harmonic, we use $\varphi^k (u-u(0))$ as a test function for a smooth cut-off function $B_{\frac{1}{2}} \prec \varphi \prec B_{1} = B^k$. Then we have
\begin{equation} \label{eq:eq20}
 0 = \int_{B^k} \langle |du|^{k-2} du, d \big( \varphi^k (u-u(0)) \big) \rangle.
\end{equation}
Since $d \big( \varphi^k (u-u(0)) \big) = \varphi^k du + k \varphi^{k-1} (u-u(0)) d\varphi$, equation~\eqref{eq:eq20} gives
\begin{equation} \label{eq:eq21}
\int_{B^k} \varphi^k |du|^k = -k \int_{B^k} \varphi^{k-1} |du|^{k-2} \langle du, d \varphi \otimes (u-u(0))\rangle.
\end{equation}
For the right hand side of~\eqref{eq:eq21}, by Cauchy-Schwarz we estimate
\begin{align}\label{eq:eq22}
    k \left| \int_{B^k} \varphi^{k-1} |du|^{k-2}  \langle du, d \varphi \otimes (u-u(0)) \rangle \right| &\leq k \int_{B^k} \varphi^{k-1} |du|^{k-1} |d \varphi| |u-u(0)|.
\end{align}
Applying Young's inequality $ab \leq \varepsilon a^p + C_{\varepsilon,p} b^q$ where $p,q > 1$, $\frac{1}{p}+\frac{1}{q} = 1$ to the above with $\varepsilon = \frac{1}{2k}$, $p = \frac{k}{k-1}$, $q = k$, we get
\begin{align}\label{eq:eq23}
    k \int_{B^k} \varphi^{k-1} |du|^{k-1} |d \varphi| |u-u(0)| \leq \frac{1}{2} \int_{B^k} \varphi^k |du|^k + C_k \int_{B^k} |u-u(0)|^k |d \varphi|^k,
\end{align}
for some $C_k > 0$ depending only on $k$. As usual, we will denote by $C_k$ all further positive constants which depend only on $k$. Hence, combining~\eqref{eq:eq21},~\eqref{eq:eq22}, and~\eqref{eq:eq23}, we obtain
\begin{align}\label{eq:eq24}
   \int_{B_{\frac{1}{2}}} |du|^k \leq \int_{B^k} \varphi^k |du|^k \leq C_k \int_{B^k} |u-u(0)|^k |d \varphi|^k \leq C_k \int_{B^k} |u-u(0)|^k.
\end{align}
Since $u(B^k) \subset B_R(p)$, we have $\mathrm{dist}_{g}(u(x),u(0)) \leq 2R$ for all $x\in B^k$. In normal coordinates~\eqref{eq:eq100},
\begin{equation}
    |u(x)-u(0)| \leq 2 \, \mathrm{dist}_{g}(u(x),u(0)) \leq 4R,
\end{equation}
so that
\begin{equation}
\int_{B_{\frac{1}{2}}} |du|^k \leq C_k\int_{B^k} |u-u(0)|^k \leq C_k 4^k R^k |B^k| = C_k R^k.
\end{equation}
Using Theorem~\ref{thrm:mvt} with Remark~\ref{rmk:mvt}, and shrinking $R$ if necessary so that $C_k R^k < \varepsilon$, we obtain
\begin{equation}
    |du(0)|^k \leq 2^k C \int_{B_{\frac{1}{2}}} |du|^k \leq C C_k R^k.
\end{equation}
Recalling that $u(x) = f(\delta x)$, we have $du(0) = \delta df(0)$, so
\begin{equation}
    \|df_0\| = \frac{1}{\sqrt{k}} |df(0)| = \frac{1}{\sqrt{k}} \frac{1}{\delta} |du(0)| \leq \frac{1}{\sqrt{k}} (CC_k)^{\frac{1}{k}} \frac{R}{\delta},
\end{equation}
giving the required result.
\end{proof}

\subsection{Bounded domains in $\R^n$ are $R_\phi$-hyperbolic} \label{sub:BoundedDomains}

\indent \indent Our next goal is to prove that bounded domains in euclidean space are $R_\phi$-hyperbolic for any calibration $\phi \in \Omega^k(\R^n)$.  In our approach, the key estimate (Proposition \ref{prop:bounded-k-harmonic-estimate}) follows from a mean value inequality for weakly conformal $k$-harmonic maps.  This in turn requires the following:

\begin{thm}[Bochner formula \cite{BIM}] \label{cite:Bochner}
Let $f \colon (\Sigma^k, g_\Sigma) \to (X^n, g_X)$ be a $C^1$ weakly conformal, $k$-harmonic map between Riemannian manifolds.  Then on $\{df \neq 0\}$ we have
\begin{equation}
\frac{k-1}{k^2} \Delta |df|^{2k} = |df|^{2k-2} |\nabla df|^2 + \frac{3k-4}{4} \left| df \right|^{2k-4} \left| \nabla |df|^2 \right|^2 + \frac{|df|^{2k}}{k}\,\mathrm{Scal}_\Sigma - \frac{|df|^{2k+2}}{k^2} \sum_{i,j} \mathrm{Sec}_X(v_i \wedge v_j),
\end{equation}
where $\{v_i\}$ is a local orthonormal frame for $df(T\Sigma) \subset f^*TX$. 
\end{thm}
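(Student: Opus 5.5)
The plan is to prove the identity pointwise on the open set $\{df \neq 0\}$. There $f$ is a genuine conformal immersion with conformal factor $\lambda^2 = |df|^2/k$, so $f$ is smooth there by elliptic regularity for the nondegenerate $k$-harmonic system. The argument combines the classical Eells--Sampson Bochner formula for $\tfrac12\Delta|df|^2$ with two extra inputs. The $k$-harmonic equation expresses the tension field $\tau(f)=\mathrm{tr}\,\nabla df$ in terms of $\nabla|df|^2$, and conformality lets us rewrite every curvature and cross term purely in terms of $|df|^2$. Throughout I use the convention $\Delta = \mathrm{tr}\nabla^2$.

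\textbf{Step 1 (reduce to the classical formula).} I start from
\[
\tfrac12\Delta|df|^2 = |\nabla df|^2 + \langle df, \nabla\tau(f)\rangle + \sum_i \langle df(\mathrm{Ric}_\Sigma e_i), df(e_i)\rangle - \sum_{i,j} R^X(df e_i, df e_j, df e_j, df e_i),
\]
where $\{e_i\}$ is a local orthonormal frame on $\Sigma$. I rewrite the last two terms using $f^*g_X = \lambda^2 g_\Sigma$. The Ricci term becomes $\lambda^2\,\mathrm{Scal}_\Sigma = \tfrac1k|df|^2\,\mathrm{Scal}_\Sigma$. Since $v_i = df(e_i)/\lambda$ is orthonormal, the curvature term becomes $\lambda^4\sum_{i,j}\mathrm{Sec}_X(v_i\wedge v_j) = \tfrac{1}{k^2}|df|^4\sum_{i,j}\mathrm{Sec}_X(v_i\wedge v_j)$.

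\textbf{Step 2 (use $k$-harmonicity).} I expand $\mathrm{div}(|df|^{k-2}df)=0$ as $|df|^{k-2}\tau(f) + df(\nabla |df|^{k-2}) = 0$. This gives
\[
\tau(f) = -\tfrac{k-2}{2}\,|df|^{-2}\, df(\nabla|df|^2).
\]
Next I compute $\langle df, \nabla \tau\rangle = \sum_i \langle df(e_i), \nabla_{e_i}\tau\rangle$ by Leibniz. This produces three kinds of terms.

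First, there are terms of the form $\langle df(e_i), df(W)\rangle = \lambda^2 \langle e_i, W\rangle$.

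Second, there are terms involving $\langle df(e_i), (\nabla_{e_i} df)(W)\rangle$. These I evaluate with the conformal identity obtained by differentiating $\langle df(X), df(Y)\rangle = \lambda^2\langle X,Y\rangle$. Using the symmetry of $\nabla df$ and a Koszul-type manipulation, this yields
\[
\langle (\nabla_X df)(Y), df(Z)\rangle = \tfrac12\big(X(\lambda^2)\langle Y,Z\rangle + Y(\lambda^2)\langle X,Z\rangle - Z(\lambda^2)\langle X,Y\rangle\big).
\]

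Third, there is a Hessian term $\Delta|df|^2$ divided by $|df|^2$.

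Collecting everything, $\langle df, \nabla\tau\rangle$ should equal $a_k\,\Delta|df|^2 + b_k\,|df|^{-2}|\nabla |df|^2|^2$ for explicit constants $a_k, b_k$ depending only on $k$.

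\textbf{Step 3 (pass to $|df|^{2k}$).} I use
\[
\Delta |df|^{2k} = k|df|^{2k-2}\Delta|df|^2 + k(k-1)|df|^{2k-4}|\nabla|df|^2|^2 .
\]
Moving the $a_k\Delta|df|^2$ term to the left turns the coefficient $\tfrac12$ into $\tfrac12 - a_k$. Multiplying through by $|df|^{2k-2}$ and normalizing should produce the prefactor $\tfrac{k-1}{k^2}$. The remaining gradient terms, namely the $b_k$ contribution together with the $k(k-1)$ term coming from the chain rule, should combine into $\tfrac{3k-4}{4}|df|^{2k-4}|\nabla|df|^2|^2$. The curvature terms from Step 1 scale accordingly.

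\textbf{Main obstacle.} The delicate step is Step 2: computing $\langle df, \nabla\tau\rangle$ exactly. This includes correctly handling the term $\langle df, \nabla df(\cdot, \nabla|df|^2)\rangle$ via the conformal identity above, where the dimension $k$ enters through traces such as $\sum_i \langle e_i, e_i\rangle = k$. Any slip there changes the constants $\tfrac{k-1}{k^2}$ and $\tfrac{3k-4}{4}$. As a consistency check I would first verify the case $k=2$. There $\tau = 0$, and the formula should reduce to the known Bochner identity for conformal harmonic maps of surfaces, with gradient coefficient $\tfrac12$ and prefactor $\tfrac14$.
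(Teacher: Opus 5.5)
Your proposal is correct. It supplies more than the paper does: the paper never derives this identity. It imports the identity from \cite{BIM}, where it is proved for smooth $f$. Its only addition is Remark~\ref{rmk:bochnersmooth}, which is exactly your opening reduction: on $\{df\neq 0\}$ the weak $k$-harmonic equation is nondegenerate, so a $C^1$ solution bootstraps to $C^\infty$ there and the identity holds pointwise.

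What your route buys is a self-contained verification. The constants you left as ``should'' do come out as stated:
\begin{itemize}
\item Tracing your conformal identity cancels its first and third terms, so $\sum_i \langle df(e_i), (\nabla_{e_i}df)(W)\rangle = \tfrac{k}{2}\,W(\lambda^2)$.
\item Combining this with the derivative of $|df|^{-2}$ and the Hessian term gives
\[
\langle df, \nabla\tau\rangle = -\frac{k-2}{2k}\,\Delta|df|^2 - \frac{(k-2)^2}{4k}\,|df|^{-2}\left|\nabla|df|^2\right|^2 ,
\]
that is, $a_k = -\tfrac{k-2}{2k}$ and $b_k = -\tfrac{(k-2)^2}{4k}$.
\item Hence $\tfrac12 - a_k = \tfrac{k-1}{k}$. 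After multiplying by $|df|^{2k-2}$ and applying the chain rule, the gradient coefficient is $\tfrac{(k-1)^2}{k} - \tfrac{(k-2)^2}{4k} = \tfrac{3k-4}{4}$.
\item The curvature terms become $\tfrac{1}{k}|df|^{2k}\mathrm{Scal}_\Sigma$ and $-\tfrac{1}{k^2}|df|^{2k+2}\sum_{i,j}\mathrm{Sec}_X(v_i\wedge v_j)$, as claimed.
\end{itemize}

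A useful cross-check on your formula for $\tau$: it is purely tangential. For a conformal immersion the tangential part of the tension is $-\tfrac{k-2}{2}\,df(\nabla \log \lambda^2)$. So on the regular set, $k$-harmonicity plus conformality is equivalent to minimality, consistent with Smith immersions being calibrated.
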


\begin{rmk}\label{rmk:bochnersmooth}
    In~\cite{BIM} this Bochner formula was obtained for smooth $f$, where both sides are zero at critical points $\{df=0\}$. Smoothness was used for one purpose only: to guarantee that $w \coloneqq |df|^{2k}\in C^{\infty}(\Sigma)$, so that the Bochner formula becomes a pointwise identity on all of $\Sigma$ and the maximum-principle argument~\cite[Proposition 4.8]{BIM} applies to $w$. \\
    \indent If $f$ is just a $C^1$ map, then $w$ is merely continuous. However, we note that on $\Omega_{\text{reg}}=\{df\neq0\}$ the weak Euler-Lagrange equation $\mathrm{div}(|df|^{k-2}df)=0$ is the weak form of a non-degenerate quasilinear elliptic equation with continuous coefficients, and because $f\in C^1$, interior $L^p$ and Schauder estimates bootstrap $f\in C^\infty(\Omega_{\text{reg}})$. Hence, the Bochner formula is valid pointwise on $\Omega_{\text{reg}}$.
\end{rmk}

\begin{prop}[Mean Value Inequality for weakly conformal, $k$-harmonic maps into $\mathbb{R}^n$] \label{cor:mvt}
There exists $C_k > 0$, which depends only on $k$, such that if $f \colon B^k \rightarrow \mathbb{R}^n$ is a $C^1$ weakly-conformal $k$-harmonic  map, then
\begin{equation*}
    \sup_{x \in B_{\frac{1}{2}}} |df(x)|^k \leq C_k \int_{B^k} |df|^k.
\end{equation*}
\end{prop}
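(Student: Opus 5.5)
We apply the Bochner formula of Theorem~\ref{cite:Bochner} with flat domain and flat target. Since $B^k$ carries the flat metric and $X=\R^n$ is flat, both curvature terms vanish. On $\Omega_{\mathrm{reg}} = \{df \neq 0\}$ we therefore obtain
$$\frac{k-1}{k^2}\,\Delta |df|^{2k} = |df|^{2k-2}|\nabla df|^2 + \frac{3k-4}{4}|df|^{2k-4}\big|\nabla|df|^2\big|^2 \geq 0,$$
because $k\geq 2$ makes $3k-4\geq 2>0$. By Remark~\ref{rmk:bochnersmooth}, this holds pointwise on $\Omega_{\mathrm{reg}}$, where $f$ is smooth. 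The plan is to show that $w := |df|^{k}$ (or a suitable power of it) is weakly subharmonic on all of $B^k$, and then to apply the classical mean value inequality for nonnegative subharmonic functions.

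\textbf{Step 1 (subharmonicity of a power).} We want subharmonicity of $w=|df|^k = (|df|^{2k})^{1/2}$, not merely of $|df|^{2k}$. Set $u = |df|^{2k}$, which is smooth and positive on $\Omega_{\mathrm{reg}}$. Then
$$\Delta u^{1/2} = \tfrac12 u^{-1/2}\Delta u - \tfrac14 u^{-3/2}|\nabla u|^2,$$
so it suffices to prove $2u\,\Delta u \geq |\nabla u|^2$. We compute $\nabla u = k|df|^{2k-2}\nabla|df|^2$, and hence
$$|\nabla u|^2 = k^2|df|^{4k-4}\big|\nabla |df|^2\big|^2.$$
On the other side, the Bochner identity gives
$$2u\Delta u \geq \frac{2k^2}{k-1}\cdot\frac{3k-4}{4}\,|df|^{4k-4}\big|\nabla|df|^2\big|^2 + (\text{Hessian term}).$$
When $\frac{3k-4}{2(k-1)}\geq 1$, i.e.\ $k\geq 2$, the gradient term alone dominates. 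If this constant comparison were to fail for some $k$, we would instead use the Kato-type inequality $|\nabla|df||\leq|\nabla df|$ to absorb the deficit with the $|df|^{2k-2}|\nabla df|^2$ term. Either way, $w$ is subharmonic on $\Omega_{\mathrm{reg}}$. If needed, we run the argument with $w_\epsilon = (u+\epsilon)^{1/2}$, which avoids division by zero.

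\textbf{Step 2 (extension across critical points).} This is the main obstacle, since $w$ is only continuous and $\{df=0\}$ may be large. The plan is to show weak subharmonicity on $B^k$ using test functions of the form $\eta\,\psi_\delta(w)$, where $\psi_\delta$ is a smooth cutoff vanishing for $w<\delta$ and equal to $1$ for $w>2\delta$. Such test functions localize the integration to $\{w>\delta\}\Subset\Omega_{\mathrm{reg}}$.

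The alternative route, which is likely cleaner, is to use $\max(w,\delta)-\delta$. This function is continuous, equals $0$ near the critical set, and is subharmonic on the open set $\{w>\delta\}$. The maximum of a subharmonic function and a constant is subharmonic, so $\max(w,\delta)$ is subharmonic in the viscosity/comparison sense: it is locally subharmonic near every point. Since subharmonicity of continuous functions is local, $\max(w,\delta)$ is subharmonic on $B^k$. Letting $\delta\to 0$ gives a decreasing limit of subharmonic functions, which is again subharmonic, and the limit is $w$. This limit argument avoids any regularity issue at $\{df=0\}$.

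\textbf{Step 3 (mean value).} For $x\in B_{1/2}$ we have $B_{1/2}(x)\subset B^k$. The sub-mean-value property then gives
$$w(x)\leq \frac{1}{|B_{1/2}|}\int_{B_{1/2}(x)} w \leq \frac{2^k}{|B^k|}\int_{B^k}|df|^k.$$
This yields the statement with $C_k = 2^k/|B^k|$, a constant depending only on $k$. In contrast with Theorem~\ref{thrm:mvt}, no smallness assumption on the energy is needed, precisely because the target curvature term vanishes.
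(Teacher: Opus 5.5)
Your proof is correct. Its overall architecture matches the paper's: Bochner on $\Omega_{\mathrm{reg}}$, then extension across $\{df=0\}$, then the sub-mean-value property on $B_{1/2}(x)\subset B^k$, with the same constant $C_k = 1/|B_{1/2}| = 2^k/|B^k|$.

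\textbf{Step 1.} Your computation of $\Delta u^{1/2}$ is a rearrangement of the paper's chain-rule identity
$\frac{2(k-1)}{k^2}\Delta|df|^k = |df|^{k-2}|\nabla df|^2 + \frac{k-2}{4}|df|^{k-4}\big|\nabla|df|^2\big|^2$.
Your condition $\frac{3k-4}{2(k-1)}\geq 1$ is equivalent to the paper's $\frac{k-2}{4}\geq 0$, and it holds for all $k\geq 2$. So the Kato-inequality and $w_\epsilon$ fallbacks are never needed.

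\textbf{Step 2.} The genuine difference is how you cross the critical set.
\begin{itemize}
\item The paper fixes a ball $B_r(x_0)$ and solves the Dirichlet problem for a harmonic $h_r$ with boundary values $w$. It then applies the comparison principle on $U=\{w>0\}\cap B_r(x_0)$, using $h_r\geq 0$ and $w=0$ on $\partial U\cap B_r(x_0)$. This gives spherical sub-mean values, which it integrates in $r$.
\item You truncate instead. $\max(w,\delta)$ is constant on the open set $\{w<\delta\}$, and on $\Omega_{\mathrm{reg}}$ it is a maximum of two subharmonic functions. These two open sets cover $B^k$, so locality of subharmonicity for continuous functions makes $\max(w,\delta)$ subharmonic on $B^k$. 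The decreasing limit as $\delta\downarrow 0$ recovers $w$.
\end{itemize}

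\textbf{What each buys.} Your route is shorter and more conceptual, but it leans on standard potential-theoretic facts: that subharmonicity is local, and that it is preserved under decreasing limits. The paper's harmonic-majorant argument effectively proves what it needs of these by hand, citing only the Poisson formula and the maximum and comparison principles.

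\textbf{Simplifications.} You could drop $\delta$ entirely. Since $w\geq 0$, the sub-mean-value inequality holds trivially at zeros of $w$, and it holds classically for small radii at points of $\Omega_{\mathrm{reg}}$. Locality then applies to $w$ directly. Your first suggested route, via test functions $\eta\,\psi_\delta(w)$, is only sketched; the $\max(w,\delta)$ route is what actually carries the argument.
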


\indent Before beginning the proof, we pause to make a comment.  If $|df|^k$ were $C^2$, then using the Bochner formula of Theorem~\ref{cite:Bochner} (which would hold pointwise on all of $B^k$), we would quickly find that $|df|^k$ is subharmonic, which would imply the required statement.  However, even if $f$ were smooth, the function $|df|^k$ does not have to be $C^2$ at critical points (i.e., points $p \in B^k$ at which $df_p=0$) for odd values of $k$.  As such, we  need a deeper analysis.

\begin{proof}
As in Remark~\ref{rmk:bochnersmooth}, set $\Omega_{\text{reg}} = B^k \setminus \{df = 0\}$, so $w \coloneqq |df|^k \in C^{\infty}(\Omega_{\text{reg}})$.  By the Bochner formula (Theorem \ref{cite:Bochner}) applied to $f$, and using that both the domain and the target are equipped with the flat metrics, on $\Omega_{\text{reg}}$ we get
\begin{equation*}
     \frac{k-1}{k^2} \Delta |df|^{2k} = |df|^{2k-2} |\nabla df|^2 + \frac{3k-4}{4} |df|^{2k-4} |\nabla |df|^2|^2.
\end{equation*}
Using the chain rule
\begin{equation*}
\frac{1}{2} \Delta |df|^{2k} = |df|^{k} \Delta |df|^k + \frac{k^2}{4} |df|^{2k-4} |\nabla |df|^2|^2,
\end{equation*}
the previous expression becomes
\begin{equation*}
    \frac{2(k-1)}{k^2} \Delta |df|^k = |df|^{k-2} |\nabla df|^2 + \frac{k-2}{4} |df|^{k-4} |\nabla |df|^2|^2,
\end{equation*}
so that $\Delta w \geq 0$ on $\Omega_{\text{reg}} = \{w > 0\},$ meaning that it is subharmonic.\\

Since $w$ need not be $C^2$ on all of $B^k$, we cannot appeal to the standard mean value theorem for subharmonic functions yet. We do have that $w \in C^0(B^k)$, $w \geq 0$ and $w \equiv 0$ on $\{df = 0\}$.  Now, take any $x_0 \in B_{\frac{1}{2}}$ and consider $B_{r}(x_0) \subset B^k$ for $ 0 < r \leq \frac{1}{2}.$ Let $h_r \in C^{\infty}(B_{r}(x_0))\cap C(\overline{B_{r}(x_0)})$ be the unique solution to the Dirichlet problem $\Delta h_r = 0$ in $B_{r}(x_0)$ and $h_r = w$ on $\partial B_{r}(x_0)$, which exists by the Poisson integral formula~\cite[Theorem 2.14]{GilbargTrudinger2001}. We claim that $w \leq h_r$ on $B_{r}(x_0)$.\\

Since $w\ge 0$ on $\partial B_r(x_0)$, the minimum principle for harmonic functions~\cite[Theorem 2.3]{GilbargTrudinger2001} gives $h_r\ge 0$ on $B_r(x_0)$.  Define $U:=\{x\in B_r(x_0):w(x)>0\}$. This is an open subset of $\Omega_{\mathrm{reg}}$, and $w$ is subharmonic on $U$, while $h_r$ is harmonic on $U$. On $\partial U$ (the boundary of $U$ relative to $B_r(x_0)$), we verify $w\le h_r$:
\begin{itemize}[leftmargin=2em]
    \item If $x\in\partial U\cap B_r(x_0)$, then $w(x)=0$ (by continuity of $w$, since $w>0$ in $U$ and $w=0$ outside). Since $h_r\ge 0$, we have $w(x)=0\le h_r(x)$.
    \item If $x\in\partial U\cap\partial B_r(x_0)$, then $w(x)=h_r(x)$ by the boundary condition.
\end{itemize}
Since $w$ is subharmonic on $U$, $h_r$ is harmonic on $U$, and $w\le h_r$ on $\partial U$, the comparison principle for subharmonic functions~\cite[Theorem 3.3]{GilbargTrudinger2001} yields $w\le h_r$ on $U$.
On $B_r(x_0)\setminus U=\{w=0\}$ we have $w\equiv 0\le h_r$. Therefore $w\le h_r$ on all of $B_r(x_0)$.
Hence in particular,
\begin{align*}
    w(x_0) &\leq h_r(x_0)\\
    &= \frac{1}{|\partial B_{r}|} \int_{\partial B_{r}(x_0)} h_r\ dS \text{ (by the mean value property of harmonic functions~\cite[Theorem 2.7]{GilbargTrudinger2001})}\\
    &= \frac{1}{|\partial B_{r}|} \int_{\partial B_{r}(x_0)} w\ dS \text{ (since $h_r=w$ on $\partial B_{r}(x_0)$)}.
\end{align*}
Integrating this inequality over $0 < r \leq R \coloneqq \frac{1}{2}$ in spherical coordinates $(\rho, \omega) \in [0,\infty) \times S^{k-1}$ with $dx = \rho^{k-1} d\rho\ d\omega, dS = \rho^{k-1}d\omega$, we get
\begin{align*}
    \frac{1}{|B_R|}\int_{B_R(x_0)}w\ dx
    &=\frac{1}{|B_R|}\int_0^R\!\left(\int_{\partial B_\rho(x_0)}w\ dS\right)d\rho\\
    &=\frac{1}{|B_R|}\int_0^R|\partial B_\rho|\cdot\frac{1}{|\partial B_\rho|}\int_{\partial B_\rho(x_0)}w\ dS\;d\rho\\
    &\ge\frac{1}{|B_R|}\int_0^R|\partial B_\rho|\,w(x_0)\ d\rho \\
    & =w(x_0)\,\frac{1}{|B_R|}\int_0^R|\partial B_\rho|\ d\rho
    =w(x_0).
\end{align*}
Recalling that $R = \frac{1}{2}$, $B_{R}(x_0) \subset B^k$, $w = |df|^k \geq 0$, we get
\begin{align*}
\frac{1}{|B_{\frac{1}{2}}|} \int_{B^k} |df(x)|^k\ dx \geq \frac{1}{|B_R|}\int_{B_R(x_0)} |df(x)|^k\ dx \geq |df(x_0)|^k.
\end{align*}
Taking the supremum over $B_{\frac{1}{2}}$ yields the required statement.
\end{proof}

\begin{rmk} \label{rmk:mvt2}
As in Remark \ref{rmk:mvt}, we may apply Proposition~\ref{cor:mvt} with the supremum taken over $B_{\frac{1}{4}}$ and the integral taken on $B_{\frac{1}{2}}$, provided that $C_k$ is replaced by $2^k C_k.$
\end{rmk}

\indent As a consequence of the mean value inequality, we obtain the following key estimate.

\begin{prop} \label{prop:bounded-k-harmonic-estimate}
Let $f \colon B^k \rightarrow \R^n$ be a $C^1$ weakly conformal, $k$-harmonic  map. If the image of $f$ is contained in the ball $B_R$ for some $R > 0$, then
\begin{equation} \label{claim:claim1}
 \|df_0\| \leq C_k R,
\end{equation}
for some $C_k > 0$ depending only on $k$.
\end{prop}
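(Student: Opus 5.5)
The plan mirrors the proof of Theorem \ref{lem:lemJKS}, with the mean value inequality of Theorem \ref{thrm:mvt} replaced by Proposition \ref{cor:mvt} (via Remark \ref{rmk:mvt2}). The target is flat, so no normal coordinates or smallness conditions are needed. As a result, the constant will depend only on $k$, and the estimate will hold for every $R$.

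\emph{Caccioppoli step.} Fix a cutoff $B_{\frac12} \prec \varphi \prec B^k$. Since $f$ is $k$-harmonic and $C^1$, I may test the weak equation $\mathrm{div}(|df|^{k-2}df) = 0$ against $\varphi^k (f - f(0))$, which is a $C^1$ compactly supported $\R^n$-valued function. Exactly as in \eqref{eq:eq20}--\eqref{eq:eq24}, I would then apply Cauchy--Schwarz and Young's inequality with exponents $\frac{k}{k-1}$ and $k$, and absorb half of $\int \varphi^k |df|^k$ into the left-hand side. This yields
$$\int_{B_{\frac12}} |df|^k \leq C_k \int_{B^k} |f - f(0)|^k |d\varphi|^k .$$
Since $f(B^k) \subset B_R$, we have $|f(x) - f(0)| \leq 2R$ for all $x$. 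With $|d\varphi|$ bounded by a constant depending only on $k$, this gives $\int_{B_{\frac12}} |df|^k \leq C_k R^k$.

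\emph{Mean value step.} I would apply Proposition \ref{cor:mvt} in the rescaled form of Remark \ref{rmk:mvt2}, setting $v(x) = f(\tfrac12 x)$. This is legitimate because $v$ is again $C^1$, weakly conformal and $k$-harmonic, by the conformal invariance of the $k$-energy in dimension $k$. It gives
$$|df(0)|^k \leq 2^k C_k \int_{B_{\frac12}} |df|^k \leq C_k R^k .$$
Here no smallness hypothesis on the energy is needed, unlike in Theorem \ref{thrm:mvt}. Finally, $\|df_0\| = \frac{1}{\sqrt k}|df(0)| \leq C_k R$.

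\emph{Main obstacle.} The only delicate points are bookkeeping. First, one must check that the test function is admissible for a merely $C^1$ map. Second, one must check that the rescaling preserves the hypotheses of Proposition \ref{cor:mvt}. Both are routine, since weak conformality and $k$-harmonicity are invariant under dilations of $B^k$. The analytically hard part, subharmonicity of $|df|^k$ across critical points, has already been handled in Proposition \ref{cor:mvt}.
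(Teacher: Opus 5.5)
Your proof is correct and follows essentially the same route as the paper: a Caccioppoli inequality obtained by testing the $k$-harmonic equation with $\varphi^k$ times the map and absorbing via Young's inequality, followed by Proposition~\ref{cor:mvt} in the rescaled form of Remark~\ref{rmk:mvt2}. The only cosmetic difference is the test function. The paper uses $\varphi^k f$ together with $|f| \leq R$, whereas you use $\varphi^k (f - f(0))$ together with $|f - f(0)| \leq 2R$. This costs a harmless factor of $2$, and it has the minor advantage of not needing $B_R$ to be centered at the origin.
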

\begin{proof}
Take a smooth cut-off function $B_{\frac{1}{2}} \prec \varphi \prec B_1 = B^k$. Since $f$ is $k$-harmonic we have
\begin{align*}
 0 &= \int_{B_1} \langle |df|^{k-2} df , d(\varphi^k f) \rangle \\
 &= \int_{B_1} \big( \varphi^k |df|^k + k \varphi^{k-1} |df|^{k-2} \langle df, d\varphi \otimes f \rangle \big).
\end{align*}
Rearranging the above and using Cauchy-Schwarz we obtain
\begin{equation*}
\int_{B_1} \varphi^k |df|^k \leq k \int_{B_1} \varphi^{k-1} |df|^{k-1} |d\varphi| |f|.
\end{equation*}
Applying Young's inequality exactly as in the proof of Theorem~\ref{lem:lemJKS}, we get
\begin{equation*}
 \int_{B_1} \varphi^k |df|^k \leq \frac{1}{2} \int_{B_1} \varphi^k |df|^k + C_{k} \int_{B_1} |d\varphi|^k |f|^k
\end{equation*}
for some $C_k > 0$. We will denote by $C_k$ all further positive constants which depend only on $k$. Thus
\begin{equation} \label{eq:eq10}
 \int_{B_1} \varphi^k |df|^k \leq C_k \int_{B_1} |d\varphi|^k |f|^k,
\end{equation}
Now we have
\begin{align*}
\|df_0\|^k &\leq C_k \int_{B_{\frac{1}{2}}} |df|^k & & \text{(by Proposition~\ref{cor:mvt} and Remark~\ref{rmk:mvt2})} \\
&\leq C_k \int_{B_1} \varphi^k |df|^k & & \text{(since $\varphi = 1$ on $B_{\frac{1}{2}}$}) \\
&\leq C_k \int_{B_1} |d\varphi|^k |f|^k & & \text{(by~\eqref{eq:eq10})} \\
&\leq C_k R^k |B_1| |d\varphi|_{\infty}^k & & \text{(since the image of $f$ is in $B_R$)} \\
&\leq C_k R^k, & &
\end{align*}
which yields~\eqref{claim:claim1}.
\end{proof}

\begin{cor} \label{cor:bounded-domains-R-hyperbolic}
Let $(U, g_0, \phi)$ be a calibrated manifold for which $U \subset \R^n$ is an open set, $g_0$ is the flat metric, and $\phi \in \Omega^k(\R^n)$ is a calibration.  If $U$ is bounded, then $U$ is $R_\phi$-hyperbolic.
\end{cor}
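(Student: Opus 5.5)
The plan is to show that $U$ is $R_\phi$-hyperbolic with one constant that works uniformly on all of $TU$. The tool is Proposition~\ref{prop:bounded-k-harmonic-estimate}, combined with the conversion between the Poincar\'{e} metric on $B^k$ (used in the definition of $K_U$) and the flat metric (used in this section).

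First, fix $R>0$ such that $U \subset B_R(0)$; this is possible because $U$ is bounded. Let $f \colon (B^k, g_1) \to U$ be a Smith immersion with $f(0)=p$ and $df_0(e_1) = \frac{1}{a}v$. Both metrics involved are conformal to one another: $g_1 = (1-|x|^2)^{-2}g_0$. Hence $f$, viewed as a map from $(B^k,g_0)$, is still weakly conformal. It is also still a Smith immersion, because $f^*g_X = \lambda^2 g_1 = \tilde\lambda^2 g_0$ with $\tilde\lambda = \lambda/(1-|x|^2)$, and $f^*\phi = \lambda^k\vol_{g_1} = \tilde\lambda^k \vol_{g_0}$.

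Next, by Smith's theorem $f$ is $k$-harmonic with respect to the flat metric on the domain. The target $\R^n$ is also flat, as Proposition~\ref{prop:bounded-k-harmonic-estimate} requires. Moreover, $f$ is $C^1$ and $f(B^k) \subset U \subset B_R$. Proposition~\ref{prop:bounded-k-harmonic-estimate} therefore gives $\|df_0\| \le C_k R$.

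At the origin the two metrics agree, since $(1-|0|^2)^{-2}=1$. So $e_1$ is a unit vector for both, and
$$\tfrac{1}{a}|v| = |df_0(e_1)| \le \|df_0\| \le C_kR.$$
Rearranging gives $a \ge |v|/(C_kR)$ for every admissible $a$. Taking the infimum yields $K_U(v) \ge \frac{1}{C_kR}|v|$ for all $v\in TU$. If no admissible $f$ exists, then $K_U(v)=\infty$ and the bound holds trivially. This gives the $R_\phi$-hyperbolicity condition with $c = 1/(C_kR)$ and neighborhood $U$ itself, at every point.

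The main thing to verify carefully is this change of domain metric. One must confirm that Smith immersions from $(B^k,g_1)$ remain $C^1$ weakly conformal $k$-harmonic maps from $(B^k,g_0)$. The cleanest justification is conformal invariance of the Smith condition, after which Smith's theorem applies directly in the flat metric. Alternatively, one can note that the $k$-energy is conformally invariant in dimension $k$.
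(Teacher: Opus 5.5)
Your proposal is correct and follows essentially the same route as the paper: apply Proposition~\ref{prop:bounded-k-harmonic-estimate} to get $\|df_0\| \le C_k R$, then deduce $K_U(v) \ge \frac{1}{C_kR}|v|$ on all of $TU$. Your explicit check that the Smith condition is invariant under the conformal change $g_1 = (1-|x|^2)^{-2}g_0$, and that the two metrics agree at the origin, fills in a detail the paper leaves implicit through its convention that $B^k$ carries the flat metric in that section.
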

\begin{proof}
Since $U$ is bounded, we have $U \subset B_R \subset \mathbb{R}^n$ for some $R>0$. Take any point $p \in U$, vector $v \in T_pU$, and Smith immersion $f \colon B^k \rightarrow U$ having $f(0)=p$ and $df_0(e_1) = \frac{1}{a}v$, where $a > 0$.  Since $f$ is Smith, it is weakly conformal and $k$-harmonic, so Proposition~\ref{prop:bounded-k-harmonic-estimate} implies that $\|df_0\| \leq C_k R$ for some $C_k > 0$ depending only on $k$.  So, using that $f$ is weakly conformal, we have
\begin{equation*}
 \frac{1}{a} |v| = |df_0(e_1)| \leq \|df_0\| \leq C_kR,
\end{equation*}
which implies that $a \geq \frac{1}{C_k R}|v|$ and hence $K_U(v) \geq \frac{1}{C_k R}|v|$.  This shows that $U$ is $R_{\phi}$-hyperbolic.
\end{proof}

\begin{rmk}
Note that the above corollary is specific to flat euclidean space.  Indeed, the calibrated manifold $X = \CP^n$ equipped with the Fubini-Study metric and K\"{a}hler form $\phi = \omega_{\mathrm{FS}}$ is not $R_\phi$-hyperbolic, despite being compact (and hence bounded as a metric space).
\end{rmk}

\section{Spaces of continuous maps} \label{sub:SpacesCts}

\indent \indent We now digress to discuss some point-set topological aspects of families of continuous maps between manifolds.  The remarks in this section will be used in the following one. \\

\indent  Let $Z$ and $X$ be smooth manifolds, and let
$$C(Z,X) = \{ f \colon Z \to X \mid f \text{ is continuous}\},$$
equipped with the compact-open topology.  Since $Z$ and $X$ are manifolds, the topological space $C(Z,X)$ is second-countable and metrizable.  In particular, a subset $\mathcal{F} \subset C(Z,X)$ is compact if and only if it is sequentially compact.  \\
\indent Note that when $X$ is equipped with a topology-compatible metric (i.e., a metric that induces the given topology), the compact-open topology on $C(Z,X)$ coincides with the topology of compact convergence (i.e., uniform convergence on compact sets).  Consequently, if $d_1$ and $d_2$ are topology-compatible metrics on $X$, then a sequence in $C(Z,X)$ converges compactly with respect to $d_1$ if and only if it converges compactly with respect to $d_2$.  \\
\indent We are interested in necessary conditions for a family $\mathcal{F} \subset C(Z,X)$ of continuous maps to be pre-compact.  To be precise:

\begin{defn}
Let $\mathcal{F} \subset C(Z,X)$, where $Z$ and $X$ are smooth manifolds.
\begin{itemize}
\item Say $\mathcal{F}$ is \emph{pre-compact} if it has compact closure.  This is equivalent to saying that every sequence in $\mathcal{F}$ has a subsequence that converges to some $f \in C(Z,X)$.
\item Say $\mathcal{F}$ is \emph{pointwise equicontinuous with respect to $d_X$}, where $d_X$ is a topology-compatible metric on $X$, if for every $\varepsilon > 0$ and every $z_0 \in Z$, there exists a neighborhood $V \subset Z$ of $z_0$ such that for all $z \in V$ and $f \in \mathcal{F}$, we have $d_X( f(z), f(z_0)) < \varepsilon$.
\item Say $\mathcal{F}$ is an \emph{even family} if for any $z \in Z$, $x\in X$, and neighborhood $U \subset X$ of $x$, there exist a neighborhood $V \subset Z$ of $z$ and a neighborhood $W \subset U$ of $x$ such that any $f \in \mathcal{F}$ with $f(z) \in W$ satisfies $f(V) \subset U$.
\end{itemize}
\end{defn}

\begin{prop} \label{prop:EquiEven}
Let $\mathcal{F} \subset C(Z,X)$, where $Z$ and $X$ are smooth manifolds.  Let $d_X$ be a metric on $X$ that induces the topology.
\begin{enumerate}[(a)]
\item If $\mathcal{F}$ is pre-compact, then $\mathcal{F}$ is pointwise equicontinuous with respect to $d_X$.  If $X$ is compact, then the converse holds.
\item If $\mathcal{F}$ is pointwise equicontinuous with respect to $d_X$, then $\mathcal{F}$ is an even family.
\end{enumerate}
\end{prop}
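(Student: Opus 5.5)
For part (a), the forward direction is the Arzelà–Ascoli direction and I will argue it by contradiction. Suppose $\mathcal{F}$ is pre-compact but not pointwise equicontinuous at some $z_0 \in Z$. Then there are $\varepsilon > 0$, points $z_j \to z_0$ and maps $f_j \in \mathcal{F}$ with $d_X(f_j(z_j), f_j(z_0)) \geq \varepsilon$. By pre-compactness and metrizability of $C(Z,X)$, we may pass to a subsequence along which $f_j \to f$ compactly for some $f \in C(Z,X)$. The set $K = \{z_0\} \cup \{z_j\}$ is compact, so $f_j \to f$ uniformly on $K$ with respect to $d_X$. The triangle inequality then gives
$$d_X(f_j(z_j), f_j(z_0)) \leq d_X(f_j(z_j), f(z_j)) + d_X(f(z_j), f(z_0)) + d_X(f(z_0), f_j(z_0)).$$
Both outer terms tend to $0$ by uniform convergence on $K$, and the middle term tends to $0$ by continuity of $f$. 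This contradicts the lower bound $\varepsilon$.

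For the converse when $X$ is compact, I will invoke the Arzelà–Ascoli theorem for maps from a locally compact, $\sigma$-compact space into a compact metric space. Pointwise equicontinuity at each point implies uniform equicontinuity on each compact subset of $Z$, via a Lebesgue-number argument. Compactness of $X$ gives pointwise relative compactness of the orbits $\{f(z) : f \in \mathcal{F}\}$ for free. Next I exhaust $Z$ by compacts $K_1 \subset K_2 \subset \cdots$ and choose a countable dense subset of $Z$. A diagonal argument then extracts, from any sequence in $\mathcal{F}$, a subsequence converging on the dense set. Equicontinuity upgrades this to uniform convergence on each $K_m$, hence to compact convergence to a continuous limit. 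The only care needed is to state that the limit is continuous, which follows from equicontinuity. I expect this step to be essentially a citation, so it is not the real obstacle.

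For part (b), fix $z$, $x$, and a neighborhood $U$ of $x$. Choose $\varepsilon > 0$ with $B_{d_X}(x, 2\varepsilon) \subset U$, which is possible because $d_X$ induces the topology. Pointwise equicontinuity at $z$ gives a neighborhood $V$ of $z$ with $d_X(f(z'), f(z)) < \varepsilon$ for all $z' \in V$ and all $f \in \mathcal{F}$. Set $W = B_{d_X}(x, \varepsilon) \subset U$. If $f(z) \in W$ and $z' \in V$, then
$$d_X(f(z'), x) < \varepsilon + \varepsilon = 2\varepsilon,$$
so $f(V) \subset U$. This gives exactly the even-family condition.

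The main subtlety is the converse direction in (a). There I must make sure that pointwise (not uniform) equicontinuity suffices; the standard Arzelà–Ascoli compact-open version does accept pointwise equicontinuity together with pointwise relatively compact orbits, and I will cite it in that form.
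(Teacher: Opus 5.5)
Your proof is correct and follows the paper's route: part (b) is the same triangle-inequality argument (your $2\varepsilon$ plays the role of the paper's $\varepsilon$), and part (a) rests on Arzel\`a--Ascoli as in the paper. The only differences are minor: you prove the forward direction of (a) directly by a short contradiction argument instead of citing Arzel\`a--Ascoli, and for the converse you use the sequential characterization of pre-compactness instead of the paper's observation that $\overline{\mathcal{F}}$ is again pointwise equicontinuous.
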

\begin{proof} ${}$
\begin{enumerate}[(a)]
\item The first claim follows from the Arzel\`a–Ascoli theorem.  For the second claim, note that if $\mathcal{F}$ is pointwise equicontinuous with respect to $d_X$, then so too is its closure $\overline{\mathcal{F}}$.  The result now again follows from Arzel\`a–Ascoli.
\item This is a standard result (see, e.g., Kelley~\cite[page 237]{Kelley}), but we provide details here for completeness.  Let $z \in Z$, $x \in X$, and $U \subset X$ be a neighborhood of $x$, and choose $\varepsilon > 0$ such that $B_{\varepsilon}(x) \subset U$.  Since $\mathcal{F}$ is equicontinuous, there exists a neighborhood $V$ of $z$ that satisfies $f(V) \subset B_{\frac{\varepsilon}{2}}(f(z))$ for all $f \in \mathcal{F}$.  Setting $W = B_{\frac{\varepsilon}{2}}(x)$, we see that if $f \in \mathcal{F}$ satisfies $f(z) \in W$, then the triangle inequality gives $f(V) \subset U$. \qedhere
\end{enumerate}
\end{proof}

\indent Another necessary condition for pre-compactness is normality, which we now recall.

\begin{defn}
Let $\mathcal{F} \subset C(Z,X)$, where $Z$ and $X$ are smooth manifolds.
\begin{itemize}
\item A sequence $f_m \in \mathcal{F}$ is \emph{compactly divergent} if for any compact sets $K \subset Z$ and $L \subset X$, there exists $N \in \mathbb{N}$ such that $m \geq N$ implies $f_m(K) \cap L = \varnothing$.
\item Say $\mathcal{F}$ is a \emph{normal family} if every sequence $f_m \in \mathcal{F}$ admits a subsequence that is either convergent or compactly divergent.
\end{itemize}
\end{defn}

\noindent In practice, we shall use the following alternate characterization of normality.

\begin{prop}\label{prop:equivnormal} 
Let $\mathcal{F} \subset C(Z,X)$, where $Z$ and $X$ are smooth manifolds.  The following are equivalent:
\begin{enumerate}[(i)]
\item Every sequence $f_m \in \mathcal{F}$ admits a convergent subsequence or admits a compactly divergent subsequence. (That is, $\mathcal{F}$ is normal.)
\item Every sequence $f_m \in \mathcal{F}$ is either compactly divergent or admits a convergent subsequence.
\item For every sequence $f_m \in \mathcal{F}$, the following holds: If there exist compact sets $K \subset Z$ and $L \subset X$ and a subsequence $f_{m_j}$ with $f_{m_j}(K) \cap L \neq \varnothing$ for all $j$, then $f_m$ admits a convergent subsequence.
\end{enumerate}
\end{prop}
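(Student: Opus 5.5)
The equivalences are pure point-set topology, so I would prove the cycle (i) $\Rightarrow$ (iii) $\Rightarrow$ (ii) $\Rightarrow$ (i). Two facts do most of the work. First, a subsequence of a convergent sequence converges, and a subsequence of a compactly divergent sequence is compactly divergent. Second, since $Z$ and $X$ are manifolds, they are locally compact, $\sigma$-compact and second countable. Hence $Z$ has a compact exhaustion $K_1 \subset K_2 \subset \cdots$ with $K_i \subset \mathrm{int}(K_{i+1})$, and likewise $X$ has a compact exhaustion $L_1 \subset L_2 \subset \cdots$. Every compact $K \subset Z$ lies in some $K_i$, and every compact $L \subset X$ lies in some $L_i$.

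(ii) $\Rightarrow$ (i) is immediate. If the whole sequence is compactly divergent, then it is itself a compactly divergent subsequence. Otherwise it has a convergent subsequence by (ii).

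(iii) $\Rightarrow$ (ii): let $(f_m)$ be a sequence in $\mathcal{F}$ that is not compactly divergent. Negating the definition gives compact sets $K \subset Z$ and $L \subset X$ such that $f_m(K) \cap L \neq \varnothing$ for infinitely many $m$. Those indices form a subsequence $f_{m_j}$ as required in (iii), so $(f_m)$ has a convergent subsequence.

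(i) $\Rightarrow$ (iii) is the only step with any content. Suppose $K$, $L$ and $f_{m_j}$ satisfy $f_{m_j}(K) \cap L \neq \varnothing$ for all $j$. Apply (i) to the sequence $(f_{m_j})_j$. If it has a convergent subsequence, that subsequence is also a subsequence of $(f_m)$, and we are done. Otherwise it has a compactly divergent subsequence $f_{m_{j_l}}$. Taking the same $K$ and $L$ in the definition of compact divergence gives $f_{m_{j_l}}(K) \cap L = \varnothing$ for all large $l$, which contradicts the hypothesis. So the compactly divergent alternative cannot occur.

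The main thing to get right is the bookkeeping of subsequences: (iii) asks for convergence of a subsequence of the original $(f_m)$, and a subsequence of a subsequence is one. I do not expect the exhaustions to be needed in the argument itself. They only serve to justify that the negation of compact divergence produces a single pair of compact sets $(K, L)$. Since the definition quantifies over all compact pairs, that negation directly gives one pair that is met infinitely often.
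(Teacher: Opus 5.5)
Your proposal is correct and matches the paper's proof: (i) $\Rightarrow$ (iii) applies normality to $(f_{m_j})$ and rules out the compactly divergent alternative using the same $K$ and $L$, and (iii) $\Rightarrow$ (ii) $\Rightarrow$ (i) comes from negating the definition of compact divergence. The paper phrases the middle step as the biconditional (ii) $\iff$ (iii). As you suspected, the compact exhaustions play no role and can be dropped.
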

\begin{proof}  ${}$
\begin{itemize}
\item $(ii) \iff (iii)$.  Negating the definition of ``compactly divergent," we see that (iii) can be rewritten as follows: ``If $(f_m)$ is not compactly divergent, then $(f_m)$ admits a convergent subsequence." This is exactly (ii).

\item $(ii) \implies (i)$. This is immediate.

\item $(i) \implies (iii)$. Suppose (i) holds.  Let $f_m \in \mathcal{F}$ be a sequence for which there exist compact sets $K \subset Z$ and $L \subset X$ and a subsequence $(f_{m_j})$ such that $f_{m_j}(K) \cap L \neq \O$ for every $j \in \mathbb{N}$.  By hypothesis, $(f_{m_j})$ must itself admit a convergent subsequence or a compactly divergent subsequence.  But the latter is impossible, so $(f_{m_j})$ admits a convergent subsequence, thereby yielding a convergent subsequence of $(f_m)$.
 \qedhere
\end{itemize}
\end{proof}

\begin{rmk} Let $X^* = X \cup \{\infty\}$ denote the one-point compactification of $X$.  Since $X$ is a manifold, $X^*$ is second-countable and metrizable.  One can show that a sequence $f_j \in C(Z,X)$ is compactly divergent if and only if $f_j \to \infty$ in $C(Z,X^*)$.
\end{rmk}

\begin{prop} \label{prop:Equi-Normal}
Let $\mathcal{F} \subset C(Z,X)$, where $Z$ and $X$ are smooth manifolds.
\begin{enumerate}[(a)]
\item If $\mathcal{F}$ is pre-compact, then $\mathcal{F}$ is normal.  If $X$ is compact, then the converse holds.
\item Let $d_X$ be a topology-compatible metric on $X$.  Suppose that every closed, $d_X$-bounded subset of $X$ is compact.  If $\mathcal{F}$ is pointwise equicontinuous with respect to $d_X$, then $\mathcal{F}$ is normal.
\item If $\mathcal{F}$ is normal, then $\mathcal{F}$ is an even family.
\end{enumerate}
\end{prop}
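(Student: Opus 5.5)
For part (a), I will argue by sequential compactness. If $\mathcal{F}$ is pre-compact, then every sequence in $\mathcal{F}$ has a subsequence converging in $C(Z,X)$, so $\mathcal{F}$ is normal by definition. For the converse, assume $X$ is compact and $\mathcal{F}$ is normal. Take a sequence $f_m \in \mathcal{F}$. A compactly divergent subsequence cannot exist: with $L = X$ (compact) and any nonempty compact $K \subset Z$, we always have $f_m(K) \cap X \neq \varnothing$. So normality forces a convergent subsequence. Since $C(Z,X)$ is metrizable, sequential pre-compactness gives pre-compactness. One needs the limit of the subsequence to lie in $C(Z,X)$, which it does by definition of convergence there.

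For part (b), I will verify criterion (iii) of Proposition~\ref{prop:equivnormal}. Let $K \subset Z$ and $L \subset X$ be compact, and let $f_{m_j}$ satisfy $f_{m_j}(K) \cap L \neq \varnothing$. The plan is to show that for each $z \in Z$ the set $\{f_{m_j}(z)\}_j$ is $d_X$-bounded. Then its closure is compact by the properness hypothesis, and the Arzel\`a--Ascoli theorem (pointwise equicontinuity plus pointwise relatively compact orbits) yields a compactly convergent subsequence.

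For the boundedness, I use that $Z$ is a connected manifold, and I may assume $K \neq \varnothing$. Choose $z_j \in K$ with $f_{m_j}(z_j) \in L$. The central step is a chaining argument. By pointwise equicontinuity with $\varepsilon = 1$, each point $w \in Z$ has a neighborhood $V_w$ with $d_X(f(y), f(w)) < 1$ for all $y \in V_w$ and $f \in \mathcal{F}$; I take these $V_w$ connected. Any two points of the compact set $K$ can be joined by a path in $Z$, and covering $K$ together with a path to $z$ by finitely many $V_w$'s gives a uniform bound $N$ on the number of sets needed in a chain from any point of $K$ to $z$. Then $d_X(f(z), f(z_j)) \le 2N$ for all $f \in \mathcal{F}$, so $d_X(f_{m_j}(z), x_0) \le 2N + \mathrm{diam}(L) + d_X(x_0, L)$ for a fixed base point $x_0$. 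I expect this uniformity of the chain length over $K$ to be the main point needing care. It is handled by first fixing a basepoint $z^* \in K$, connecting $z^*$ to $z$ by a compact path, and covering $K$ by finitely many $V_w$. Any point of $K$ then lies in some $V_{w_i}$, and each $w_i$ is joined to $z^*$ by a fixed finite chain, so the chain lengths are bounded by a finite maximum. With boundedness established, the diagonal Arzel\`a--Ascoli argument finishes part (b).

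For part (c), I will argue by contradiction using sequences. Suppose $\mathcal{F}$ is normal but not even. Then there exist $z$, $x$, and a neighborhood $U$ of $x$, together with shrinking neighborhoods $V_m$ of $z$ and $W_m$ of $x$ (nested coordinate balls, say), and maps $f_m \in \mathcal{F}$ with $f_m(z) \in W_m$ but some $y_m \in V_m$ with $f_m(y_m) \notin U$. Since $f_m(z) \to x$, the sequence is not compactly divergent: take $K = \{z\}$ and $L = \{x\} \cup \{f_m(z)\}$, which is compact. By normality (criterion (ii)), a subsequence converges compactly to some $f$, and necessarily $f(z) = x$. Uniform convergence on the compact set $\{z\} \cup \{y_m\}$, together with continuity of $f$ and $y_m \to z$, gives $f_m(y_m) \to f(z) = x \in U$. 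This contradicts $f_m(y_m) \notin U$ with $U$ open.
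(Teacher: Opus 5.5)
Your proposal is correct and follows the paper's proof essentially step for step. In (a) you use that no sequence can diverge compactly into a compact $X$; in (b) you verify criterion (iii) by chaining equicontinuity neighborhoods to get pointwise $d_X$-boundedness and then apply Arzel\`a--Ascoli; in (c) you build a contradiction sequence that is not compactly divergent. Your variations are minor and, if anything, slightly more careful: you chain through a basepoint along paths instead of through a compact connected set $C'$, you bound via $L$ directly, and in (c) you take $L=\{x\}\cup\{f_m(z)\}$ rather than $\overline{B}_{1/2}(p)$, which need not be compact for an arbitrary compatible metric.
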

\begin{proof} ${}$
\begin{enumerate}[(a)]
\item The first claim is immediate.  For the converse, note that if $X$ is compact, then a sequence in $\mathcal{F}$ cannot be compactly divergent.
\item This is proven in~\cite[Lemma 1.1]{Wu}, but we provide full details here.

We want to prove that $\mathcal{F}$ is normal, and we use characterization (iii) of Proposition~\ref{prop:equivnormal}.\\
Take any sequence $f_i \in \mathcal{F}$ such that there exist compact sets $K \subset Z$ and $K^{'} \subset X$ and a subsequence with $f_i(K) \cap K^{'}\neq \varnothing$, $\forall i$. Let $y_i \in f_i(K) \cap K'$, so that $y_i = f_i(x_i)$ for some $x_i \in K$. By compactness, up to passing to a further subsequence, we have $x_i \rightarrow x_0 \in K$, $y_i \rightarrow y_0 \in K^{'}$.

We claim that for any $z_0 \in Z$, there exists $R > 0$ such that
$$ A_{z_0} := \{f_i(z_0) \colon i \geq 1\} \subset \overline{B}_{R}^{d_X}(y_0), \quad \forall i. $$
The set $C \coloneqq \{z_0\} \cup\{x_i\}\cup\{x_0\}$ is compact, so it lies in some compact connected $C'\subset Z$. By equicontinuity, each $q\in C'$ has a neighborhood $V_q$ with $d_X(f(x),f(q))<\tfrac12$ for all $x\in V_q$, $f\in \mathcal{F}$; hence any two points in $V_q$ have $d_X(f(x),f(x'))<1$.  By compactness, we may cover $C'$ by finitely many such $V_q$'s, which we call $U_1,\dots,U_N$.

Since $C'$ is connected and covered by $U_1,\dots,U_N$, any two points of $C'$ are joined by an overlapping chain of at most $N$ of these sets. Fix $i$ and a chain from $x_i$ to $z_0$. Choosing a point in each overlap gives points $v_0=x_i,\dots,v_{m+1}=z_0$ (with $m+1\le N$) with each consecutive pair in a common $U_k$, so $d_X(f_i(v_j),f_i(v_{j+1}))<1$. Summing, we obtain
\[d_X(f_i(z_0),f_i(x_i))<N.\]
With $M=\sup_i d_X(y_i,y_0)<\infty$, we have
\[
d_X(f_i(z_0),y_0)\le d_X(f_i(z_0),y_i)+d_X(y_i,y_0)<N+M.
\]
Thus $A_{z_0}\subseteq \overline{B}_{R}^{d_X}(y_0)$ with $R=N+M$.
We now claim that $f_i$ has a subsequence that converges uniformly (with respect to $d_X)$ on compact sets, which would complete the proof.  To see this, note that $f_i$ is pointwise equicontinuous.  Moreover, for each $z_0 \in Z$, the set $A_{z_0} \subset X$ is pre-compact in $X$.  Indeed, since its closure satisfies $\overline{A}_{z_0} \subseteq \overline{B}_R^{d_X}(y_0)$ by the above, it follows that $\overline{A}_{z_0}$ is $d_X$-bounded, and therefore compact by hypothesis.  Therefore, the conditions of the Arzel\`{a}-Ascoli theorem are fulfilled, so $f_i$ admits a subsequence that converges uniformly on $Z$ with respect to $d_X$.

\item For convenience, we equip $Z$ and $X$ with topology-compatible metrics. Assume for contradiction that $\mathcal{F}$ is not an even family. Then there exists $x_0 \in Z$, $p \in X$, and a neighborhood $U$ of $p$ in $X$ such that for all $i$, there exists $f_i \in \mathcal{F}$ satisfying
$$ f_i(x_0) \in B_{\frac{1}{i}}(p) \quad \text{but} \quad f_i(B_{\frac{1}{i}}(x_0)) \not\subset U, $$
where the second condition above says that there exist $x_i \in B_{\frac{1}{i}}(x_0)$ with $f_i(x_i) \notin U$. Note that $x_i \rightarrow x_0$.

Set $K = \{x_0\} \subset Z$ and $K^{'} = \overline{B}_{\frac{1}{2}}(p) \subset X$, which are both compact. Then
$$ f_i(x_0) \in \overline{B}_{\frac{1}{i}}(p) \subset \overline{B}_{\frac{1}{2}}(p) \quad \text{for $i \geq 2$}, $$
so $f_i(K) \cap K^{'} \neq \varnothing$. Now $\mathcal{F}$ is normal by the hypothesis, so there exists a subsequence $f_i$ that converges uniformly on compact subsets to $f_{\infty} \in C(Z,X)$.  We have the following facts:
\begin{itemize}
    \item from $f_i(x_0) \rightarrow p$ and uniform convergence, $f_{\infty}(x_0) = p$.
    \item $f_i \rightarrow f_{\infty}$ uniformly on compact sets, so $\mathrm{dist}_X(f_i(x_i), f_{\infty}(x_i)) \rightarrow 0$, because $\{x_i\} \cup \{x_0\}$ is compact.
    \item $f_{\infty}$ is continuous and $x_i \rightarrow x_0$, so $\mathrm{dist}_X(f_{\infty}(x_i), f_{\infty}(x_0)) \rightarrow 0.$
\end{itemize}
Combining all of these gives
$$ \mathrm{dist}_X(f_{i}(x_i),p) \leq \mathrm{dist}_X(f_i(x_i), f_{\infty}(x_i)) + \mathrm{dist}_X(f_{\infty}(x_i), f_{\infty}(x_0)) \rightarrow 0. $$
Since $U$ is an open neighborhood of $p$, eventually we have $f_i(x_i) \in U$, which is our desired contradiction. \qedhere
\end{enumerate}
\end{proof}

\begin{rmk} \label{rmk:CompactEquivalence}
From Propositions \ref{prop:EquiEven}(a) and \ref{prop:Equi-Normal}(a), we see that when $X$ is compact, for any metric $d_X$ inducing the topology of $X$, we have that:
$$\mathcal{F} \text{ is pre-compact} \ \iff \ \mathcal{F} \text{ is pointwise equicontinuous w.r.t. }d_X \ \iff \ \mathcal{F} \text{ is normal.}$$
\end{rmk}

\section{Royden's theorem and tautness in calibrated geometry} \label{sec:EquivHyp}

\indent \indent In $\S$\ref{sub:EquivHyp}, we prove Theorem \ref{thm:RoydenMain} (see Theorem \ref{theorem:theoremC}), a calibrated analogue of Royden's result.  Then, in $\S$\ref{sub:DiscreteQuot}, we apply this result to study the hyperbolicity and repleteness of discrete quotients.  Finally, in $\S$\ref{sec:Tautness}, we prove Theorem \ref{thm:Kiernan-main} (see Theorem \ref{theorem:Royden}), a calibrated analogue of Kiernan's result.

\subsection{Equivalence of hyperbolicity notions} \label{sub:EquivHyp}

\indent \indent We will be concerned with $\phi$-replete calibrated manifolds.  Our use of the repleteness assumption will arise primarily through the following lemma and the subsequent theorem.

\begin{lem} \label{lemma:lemmaK}
Let $(X,g_X,\phi)$ be $\phi$-replete. Let $K \subset X$ be compact. Then there exists $C_K > 0$ such that
$$ K_X(v) \leq C_K |v|_X, \quad \text{for all $v \in TK$}. $$
\end{lem}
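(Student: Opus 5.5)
The plan is a compactness argument on the unit sphere bundle, using that $K_X$ is upper semicontinuous and finite (Proposition~\ref{claim:repletness}), together with the homogeneity $K_X(tv) = |t| K_X(v)$ for $t \in \R$. Homogeneity for $t>0$ follows from the definition by rescaling the parameter $a$. For $t<0$, precompose a Smith immersion with a rotation of $B^k$ sending $e_1 \mapsto -e_1$; this is an orientation-preserving isometry of the Poincar\'e ball when $k \geq 2$, so the composite is still Smith. In any case only $t>0$ is needed below.

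Consider the unit sphere bundle
$$S K := \{ v \in TX \colon \pi(v) \in K,\ |v|_X = 1\},$$
which is compact because $K$ is compact and the fibres are spheres. The function $K_X$ restricted to $SK$ is upper semicontinuous with values in $[0,\infty)$ by Proposition~\ref{claim:repletness}. An upper semicontinuous real-valued function on a compact space is bounded above: the sets $K_X^{-1}([0,n))$ form an increasing open cover of $SK$, so finitely many suffice. Set
$$C_K := 1 + \sup_{SK} K_X < \infty.$$

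Now let $v \in TK$. If $v = 0$, then $K_X(0) = 0 \leq C_K \cdot 0$. Otherwise $v/|v| \in SK$, and positive homogeneity gives
$$K_X(v) = |v|\, K_X(v/|v|) \leq C_K |v|.$$

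I expect the only point needing care to be the homogeneity $K_X(tv) = tK_X(v)$ for $t>0$. If $f$ is admissible for $v$ with parameter $a$, i.e.\ $df_0(e_1) = \tfrac1a v$, then the same $f$ satisfies $df_0(e_1) = \tfrac{1}{ta}(tv)$, so it is admissible for $tv$ with parameter $ta$. Hence the admissible sets correspond bijectively under $a \mapsto ta$, and the infima scale by $t$. This identity was already used in the proof of Proposition~\ref{claim:repletness}, so it may be quoted directly.
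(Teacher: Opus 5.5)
Your proof is correct. It differs from the paper's mainly in where upper semicontinuity is applied.

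The paper never invokes finiteness of $K_X$, nor upper semicontinuity away from the zero section. For each $p$ it uses only that $K_X^{-1}([-\infty,1))$ is an open neighbourhood of $(p,0_p)$, so it contains a set $W_p=\{(q,w_q)\colon \mathrm{dist}_g(p,q)<\varepsilon_p,\ |w_q|<\varepsilon_p\}$. Rescaling any $v_q$ with $q\in B_{\varepsilon_p}(p)$ to length $\varepsilon_p/2$ then gives $K_X(v_q)<\tfrac{2}{\varepsilon_p}|v_q|$. Compactness of $K$ in $X$ (finitely many balls $B_{\varepsilon_{p_j}}(p_j)$) finishes the proof.

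You instead quote Proposition~\ref{claim:repletness} for finiteness. You then combine compactness of the unit sphere bundle $SK$ with the general fact that a finite upper semicontinuous function on a compact set is bounded above.

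What each route buys:
\begin{itemize}
\item Your route is the more standard and modular argument.
\item The finiteness you quote is proved by exactly the same zero-section rescaling trick. So the paper's proof is essentially a version of that proposition's proof made uniform in $p$.
\item The paper's version is self-contained. It also shows that the conclusion needs only upper semicontinuity of $K_X$ along the zero section. Your argument additionally uses upper semicontinuity at every unit vector over $K$, since openness of $\{v\in SK\colon K_X(v)<n\}$ requires it.
\end{itemize}
Your remark about $t<0$ is correct but, as you note, unnecessary.
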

\begin{proof}
Fix $p \in X$. Since $K_X(0_p) = 0$, we have $(p,0_p) \in K_X^{-1}([-\infty,1))$.  Since $X$ is $\phi$-replete, the set $K_X^{-1}([-\infty,1)) \subset TX$ is open, so there exists $\varepsilon_p > 0$ such that
$$W_p := \{(q,w_q) \in TX \colon \mathrm{dist}_g(p,q) < \varepsilon_p, |w_q| < \varepsilon_p\} \subset K_X^{-1}([-\infty,1)). $$
Now, take any $v_q \in T_qX$ for which $\mathrm{dist}_g(p,q) < \varepsilon$. Set $\tilde{v}_q := \frac{\varepsilon_p}{2} \frac{v_q}{|v_q|}$, which has norm $\frac{1}{2}\varepsilon_p$ and thus $(q,\tilde{v}_q) \in W_p$, so $K_X(\tilde{v}_q) < 1$.  Therefore, $K_X(v_q) = K_X(\frac{2 |v_q|}{\varepsilon_p} \tilde{v}_q) = \frac{2 |v_q|}{\varepsilon_p} K_X(\tilde{v}_q) < \frac{2}{\varepsilon_p} |v_q|$. \\
\indent Finally, letting $\pi \colon TX \to X$ be the standard projection $\pi(p, v_p) = p$, the compactness of $K$ yields a finite set $\{p_1, \ldots, p_N\} \subset K$ such that $K \subset \bigcup_{j=1}^N \pi(W_{p_j})$.  Setting $C_K = \max\{ \frac{2}{\varepsilon_{p_1}}, \ldots, \frac{2}{\varepsilon_{p_N}}\}$ yields the desired statement.
\end{proof}

\begin{thm}[$K_\phi$-hyperbolicity implies equivalence of $d_\phi$ and $\mathrm{dist}_g$ topologies] \label{thm:main-claim}
Let $(X,g,\phi)$ be $\phi$-replete and $K_\phi$-hyperbolic (i.e., $d_{\phi}$ is non-degenerate).  Then the $d_\phi$-topology on $X$ coincides with the $\mathrm{dist}_g$-topology on $X$ (i.e., the given topology on $X$).
\end{thm}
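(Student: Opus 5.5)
We show the two inclusions of topologies separately. One direction uses Lemma \ref{lemma:lemmaK}; the other mirrors Royden's argument in the complex case, using the decreasing property of $K_X$ under Smith immersions and the completeness of the Poincar\'{e} ball.

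\textbf{Step 1: $d_\phi$-open sets are $\mathrm{dist}_g$-open.} It suffices to show that $d_\phi(p,\cdot)$ is $\mathrm{dist}_g$-continuous, or just that $d_\phi$-balls contain $\mathrm{dist}_g$-balls. Fix $p$ and let $K$ be a compact geodesically convex ball $\overline{B}_r(p)$. By Lemma \ref{lemma:lemmaK}, $K_X(v)\le C_K|v|$ on $TK$. For $q\in B_{r}(p)$, integrate $K_X$ along a minimizing geodesic from $p$ to $q$, which stays in $K$. This gives $d_\phi(p,q)\le C_K\,\mathrm{dist}_g(p,q)$. Hence every $d_\phi$-ball of radius $\varepsilon$ about $p$ contains $B^g_{\varepsilon/C_K}(p)$. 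By the triangle inequality for $d_\phi$, the $d_\phi$-topology is coarser than the manifold topology. Note that $d_\phi$ is finite on connected $X$ by the same chain-of-balls estimate.

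\textbf{Step 2: $\mathrm{dist}_g$-open sets are $d_\phi$-open.} We must show that for every $p$ and every $r>0$ there is $\varepsilon>0$ with $\{q: d_\phi(p,q)<\varepsilon\}\subset B^g_r(p)$. Argue by contradiction. Suppose there are $q_j$ with $d_\phi(p,q_j)\to 0$ but $\mathrm{dist}_g(p,q_j)\ge r$. Take curves $\gamma_j$ from $p$ to $q_j$ with $\int K_X(\gamma_j')<1/j$. Each $\gamma_j$ must exit $B^g_{r}(p)$, so we may assume each $\gamma_j$ runs from $p$ to a point $q_j'$ with $\mathrm{dist}_g(p,q_j')= r$ while staying in $\overline{B}^g_r(p)$. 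The sphere $S_r=\partial B^g_r(p)$ (for small $r$) is compact. After passing to a subsequence, $q_j'\to q_\infty\in S_r$, with $q_\infty\ne p$. By Step 1, $d_\phi$ is $\mathrm{dist}_g$-continuous, so
$$d_\phi(p,q_\infty)\le d_\phi(p,q_j')+d_\phi(q_j',q_\infty)\to 0.$$
This contradicts non-degeneracy.

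\textbf{Main obstacle.} The expected obstacle is making Step 1 rigorous, namely the continuity of $d_\phi$ with respect to $\mathrm{dist}_g$ near $q_\infty$. This is exactly where repleteness enters, through the uniform local bound of Lemma \ref{lemma:lemmaK} on a compact neighborhood of $q_\infty$. A subsidiary point is the measurability of $t\mapsto K_X(\gamma'(t))$, which is guaranteed by upper semicontinuity as noted after the definition of $d_\phi$. Both topologies thus have the same open sets, which proves the theorem.
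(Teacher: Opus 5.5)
Your proof is correct and is essentially the paper's: Step 1 is the same local Lipschitz bound $d_\phi \le C_K\,\mathrm{dist}_g$ obtained from Lemma \ref{lemma:lemmaK}, and Step 2 uses the same ingredients as the paper (paths leaving a small ball must cross a compact sphere, $d_\phi(p,\cdot)$ is continuous there, and $d_\phi$ is non-degenerate), phrased as a sequential contradiction rather than via the positive minimum of $d_\phi(p,\cdot)$ on that sphere. Your opening remark that Step 2 would use the decreasing property and the completeness of the Poincar\'e ball does not match the argument you actually give, which needs neither.
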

\begin{proof}
Take any $x \in X$ and choose $\delta > 0$ such that $B_{\delta}(x)$ is (strongly) geodesically convex with respect to the standard topology. Recall that this means that every pair of points in $B_{\delta}(x)$ can be joined by a minimizing geodesic inside $B_{\delta}(x)$.  We shall show that: (a) the $\mathrm{dist}_g$-topology is stronger than  the $d_\phi$-topology, and conversely (b) the $\mathrm{dist}_g$-topology is weaker than the $d_\phi$-topology.
\begin{enumerate}[(a)]
\item Set $K := \overline{B_{\delta}(x)}$, which is compact in the standard topology. Hence, by Lemma~\ref{lemma:lemmaK}, there exists $C_K > 0$ such that for all $y \in K$ and $v \in T_yK$ we have $K_X(v) \leq C_K |v|$. Let $\gamma \colon [0,1] \to X$ be a minimizing geodesic from $y$ to $z$ in $B_{\delta}(x)$. Then
\begin{equation} \label{eq:eq9}
 d_\phi(y,z) \leq \int_0^1 K_X(\gamma'(t))\,dt \leq \int_0^1 C_K |\gamma'(t)|\,dt = C_K \mathrm{dist}_g(y,z),
\end{equation}
which means that the identity map $\mathrm{Id} \colon (X,\mathrm{dist}_g) \rightarrow (X,d_\phi)$ is locally Lipschitz, and thus continuous, yielding the required statement.

\item Consider the map $G \colon B_{\delta}(x) \rightarrow \mathbb{R}$ given by $G(y) = d_\phi(x,y)$.  The inequality~\eqref{eq:eq9} shows that $G$ is continuous with respect to the standard topology. So for any $\alpha \in (0,\delta)$, if we restrict $G$ to the compact set $\partial B_{\frac{\alpha}{2}}(x)$, then it attains a minimum, which is positive by the non-degeneracy of $d_\phi$. That is, there exists $y_0 \in \partial B_{\frac{\alpha}{2}}(x)$ such that $\displaystyle d_\phi(x,y_0) = \inf_{y\in \partial B_{\frac{\alpha}{2}}(x)} d_\phi (x,y) > 0$.

If $y \in X \setminus \overline{B_{\frac{\alpha}{2}}(x)}$, then any path from $x$ to $y$ must pass through some point $y' \in \partial B_{\frac{\alpha}{2}}(x)$, so
$$ d_\phi (x, y) \geq d_\phi (x, y') \geq d_\phi (x, y_0). $$
It follows that
$$ \{y \in X : d_\phi(x,y) < d_\phi(x,y_0)\} \subset B_{\alpha}(x), $$
where $d_\phi(x,y_0) > 0$. This means that a base of open sets in the $\mathrm{dist}_g$-topology are also open in the $d_\phi$ topology, yielding the required statement. \qedhere
\end{enumerate}
\end{proof}

We now prove the main result of this section.

\begin{thm}[Royden-type theorem] \label{theorem:theoremC}
Let $(X, g, \phi)$ be $\phi$-replete. The following are equivalent:
\begin{enumerate}\setlength\itemsep{-0.5mm}
 \item[\emph{(1)}] $X$ is $R_{\phi}$-hyperbolic.
 \item[\emph{(2)}] $X$ is $K_{\phi}$-hyperbolic.
 \item[\emph{(3)}] $X$ is $K_{\phi}$-hyperbolic and $d_{\phi}$ induces the standard topology.
 \item[\emph{(4)}] $X$ is $K_\phi$-hyperbolic and $\mathrm{SmIm}(B^k,X)$ is pointwise equicontinuous with respect to $d_\phi$.
 \item[\emph{(5)}] $\mathrm{SmIm}(B^k,X)$ is an even family.
\end{enumerate}
Moreover, if $X$ is compact, then conditions (1) to (5) are equivalent to:
\begin{enumerate}
\item[\emph{(6)}] $\mathrm{SmIm}(B^k,X)$ is pre-compact. 
\end{enumerate}
\end{thm}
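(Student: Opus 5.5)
I would prove the cycle (1) $\Rightarrow$ (2) $\Rightarrow$ (3) $\Rightarrow$ (4) $\Rightarrow$ (5) $\Rightarrow$ (1), and then treat (6) separately in the compact case.

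\textbf{The easy implications.}
(1) $\Rightarrow$ (2) is Theorem~\ref{thm:R-Implies-K}. (2) $\Rightarrow$ (3) is Theorem~\ref{thm:main-claim}.

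\textbf{(3) $\Rightarrow$ (4).} I would use the distance-decreasing property of $d_\phi$ under Smith immersions. Let $\rho$ be the distance of the Poincar\'e metric on $B^k$ (equal to $\mathrm{dist}_1$, up to the normalization of the curvature). I claim that every $f \in \mathrm{SmIm}(B^k,X)$ satisfies
\[
d_\phi(f(x),f(y)) \leq \rho(x,y).
\]
Infinitesimally, this is $K_X(df_x(v)) \leq |v|_{g_1}$. To see it, precompose $f$ with a Poincar\'e isometry of $B^k$ sending $0 \mapsto x$ and $e_1 \mapsto v/|v|$. This is again a Smith immersion, since Smith immersions are preserved under orientation-preserving isometries of the domain (the ``decreasing property'' of \cite{BIM}). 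Integrating along the $\rho$-geodesic from $x$ to $y$ then gives the claim. So $\mathrm{SmIm}$ is $1$-Lipschitz from $\rho$ to $d_\phi$, and hence pointwise equicontinuous; by (3) the metric $d_\phi$ is topology-compatible, so this is the right notion.

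\textbf{(4) $\Rightarrow$ (5).} Since $d_\phi$ induces the topology by Theorem~\ref{thm:main-claim}, this is Proposition~\ref{prop:EquiEven}(b).

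\textbf{(5) $\Rightarrow$ (1).} This is the main step, and the tool is the Schwarz lemma, Theorem~\ref{lem:lemJKS}. Fix $p \in X$ and a small $R$ for which Theorem~\ref{lem:lemJKS} applies (normal coordinates, with the smallness needed in its proof). Apply evenness with $z = 0$, $x = p$ and $U = B_R(p)$. This gives a Euclidean radius $\delta \in (0,1)$ and $\delta' \in (0,R)$ such that every $f \in \mathrm{SmIm}$ with $f(0) \in B_{\delta'}(p)$ satisfies $f(B_\delta(0)) \subset B_R(p)$. The Schwarz lemma is stated for flat $B^k$, so we need to compare the two conformal structures. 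The map $f$, viewed on the flat ball, is still a Smith immersion, because the Smith condition is conformally invariant in the domain metric. At $0$ the metrics $g_1$ and $g_0$ agree, so $\|df_0\|$ is the same for both. Theorem~\ref{lem:lemJKS} then gives $\|df_0\| \leq CR/\delta$ for every such $f$. Now take $v \in T_qX$ with $q \in B_{\delta'}(p)$ and a competitor $f$ with $f(0) = q$ and $df_0(e_1) = v/a$. Then
\[
\frac{|v|}{a} \leq \frac{CR}{\delta},
\]
so $K_X(v) \geq \frac{\delta}{CR}|v|$ on $TB_{\delta'}(p)$. This is exactly (1) at $p$. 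Note that repleteness is not used here. The point to check carefully is that the Schwarz lemma's hypothesis is met uniformly over all $f$ with $f(0) \in B_{\delta'}(p)$, which is precisely what evenness supplies, and that $R$ can be taken small independently of $f$.

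\textbf{Compact case (6).} When $X$ is compact, Remark~\ref{rmk:CompactEquivalence} shows that pre-compactness is equivalent to pointwise equicontinuity for any compatible metric. Then (6) $\Rightarrow$ (5) follows from Propositions~\ref{prop:EquiEven}(a) and~\ref{prop:EquiEven}(b). For (4) $\Rightarrow$ (6), $d_\phi$ is topology-compatible by (3), so equicontinuity with respect to $d_\phi$ gives pre-compactness by Proposition~\ref{prop:EquiEven}(a) with the compact target $X$.
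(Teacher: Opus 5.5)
Your proposal is correct and follows the paper's proof essentially step for step. It uses the same cycle $(1)\Rightarrow(2)\Rightarrow(3)\Rightarrow(4)\Rightarrow(5)\Rightarrow(1)$, applies the Schwarz lemma (Theorem~\ref{lem:lemJKS}) via evenness at $z=0$, and handles the compact case through Remark~\ref{rmk:CompactEquivalence} and Proposition~\ref{prop:EquiEven}.

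The differences are only refinements. You derive $d_\phi(f(x),f(y))\le \mathrm{dist}_1(x,y)$ directly from isometry invariance instead of citing \cite[Corollary 6.17]{BIM}. You also make explicit two points the paper leaves implicit: conformal invariance of the Smith condition, which lets the flat-metric Schwarz lemma apply to maps from $(B^k,g_1)$ because $g_1=g_0$ at $0$; and topology-compatibility of $d_\phi$ before invoking Proposition~\ref{prop:EquiEven}(b).
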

\begin{proof}
We will show that $(1) \implies (2) \implies (3) \implies (4) \implies (5) \implies (1)$.
\begin{itemize}
\item $(1) \implies (2)$. This is Theorem \ref{thm:R-Implies-K}.
\item $(2) \implies (3)$. This is  Theorem~\ref{thm:main-claim}.
\item $(3) \implies (4)$. In Broder--Iliashenko--Madnick~\cite[Corollary 6.17]{BIM} it is shown that for every Smith immersion $f \colon (B^k, g_1, \vol_1) \to (X, g, \phi)$, we have $d_{\phi}(f(p), f(q)) \leq \mathrm{dist}_1(p,q)$ for all $p,q \in B^k$. This implies that $\mathrm{SmIm}(B^k, X)$ is uniformly equicontinuous with respect to $d_{\phi}$ and hence pointwise equicontinuous with respect to $d_{\phi}$.
\item $(4) \implies (5)$. This is Proposition  \ref{prop:EquiEven}(b).

\item $(5) \implies (1)$. Take any $p \in X$. We need to find a neighborhood $W \subset X$ of $p$ and a constant $c > 0$, such that $K_X(v) \geq c |v|$ for all $v \in TW$.

Consider $0 \in B^k$ and $p \in X$, and fix $B_{R}(p) \subset X$. Since $\mathrm{SmIm}(B^k, X)$ is an even family, there exist $B_\delta(0) \subset B^k$ and an open ball $W := B_{\delta'}(p) \subset B_{R}(p)$ such that every Smith immersion $f \colon B^k \to X$ having $f(0) \in B_{\delta'}(p)$ satisfies $f( B_\delta(0) ) \subset B_{R}(p)$. Now, by the Schwarz Lemma~\ref{lem:lemJKS}, there exists $C > 0$ such that every Smith immersion $f \colon B^k \to X$ with $f(0) \in B_{\delta'}(p)$ satisfies 
\begin{equation} \label{eq:eq11}
\| df_0 \| \leq C \frac{R}{\delta}.
\end{equation}
Hence, for any $v \in TW$, taking any Smith immersion $f \colon B^k \rightarrow X$ with $f(0) \in W = B_{\delta'}(p)$ and $df_0(e_1) = \frac{1}{a} v$, and thus using~\eqref{eq:eq11} we get
\begin{align*}
 \frac{1}{a} |v| = |df_0 (e_1)| \leq \|df_0\| \leq C\frac{R}{\delta}.
\end{align*}
Rearranging gives $a \geq \frac{\delta}{CR} |v|$, and then taking infima over all such $f$ yields the required inequality $K_X(v) \geq \frac{\delta}{CR} |v|$ for all $v \in TW$.
\end{itemize}
Finally, suppose $X$ is compact.  By Remark \ref{rmk:CompactEquivalence}, we see that (4) $\implies$ (6).  By Proposition \ref{prop:EquiEven}, we have (6) $\implies$ (5).
\end{proof}

\begin{rmk} \label{rmk:NoRepleteness}
In the previous theorem, our proof of (5) $\implies$ (1) does not use the $\phi$-repleteness hypothesis.
\end{rmk}

\subsection{Hyperbolicity of discrete quotients} \label{sub:DiscreteQuot}

\indent \indent We now turn to the hyperbolicity of discrete quotients.  Throughout this section, we work in the following setting.  Let $(\tilde{X},g_{\tilde{X}})$ be an oriented Riemannian manifold, and let $\widetilde{\phi} \in \Omega^k({\tilde{X}})$ be a $\Gamma$-invariant calibration, where $\Gamma \leq \mathrm{Isom}(g_{\tilde{X}})$ is a discrete subgroup that acts freely and properly on ${\tilde{X}}$. Let $\pi \colon{\tilde{X}} \rightarrow {\tilde{X}}/\Gamma =: X$ be the projection. Equip $X$ with the unique Riemannian metric $g_X$ such that $\pi \colon ({\tilde{X}},g_{\tilde{X}}) \rightarrow (X,g_X)$ is a local isometry, and with the unique calibration $\phi \in \Omega^k(X)$ that satisfies $\pi^*\phi = \widetilde{\phi}$.

\begin{lem}[Lifts of Smith immersions] \label{lem:Lifting}
Let $(\Sigma, g_\Sigma)$ be an oriented Riemannian $k$-manifold.  If $\widetilde{f} \colon \Sigma \to \widetilde{X}$ is Smith, then $\pi \circ \widetilde{f} \colon \Sigma \to X$ is Smith.  Conversely, if $f \colon \Sigma \to X$ is Smith and $\Sigma$ is simply-connected, then there exists a lift $\widetilde{f} \colon \Sigma \to \tilde{X}$ of $f$ that is Smith.
\end{lem}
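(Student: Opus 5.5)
The plan is to verify the Smith equations pointwise, using that $\pi$ is a local isometry with $\pi^*\phi = \widetilde{\phi}$, and then to get the converse from standard covering-space lifting theory.

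\emph{Forward direction.} Let $\widetilde{f}$ be Smith with conformal factor $\lambda$. Set $f = \pi \circ \widetilde{f}$, which is $C^1$. The metric condition follows from
$$f^*g_X = \widetilde{f}^*\pi^*g_X = \widetilde{f}^*g_{\tilde{X}} = \lambda^2 g_\Sigma.$$
The calibration condition follows from
$$f^*\phi = \widetilde{f}^*\pi^*\phi = \widetilde{f}^*\widetilde{\phi} = \lambda^k \vol_\Sigma.$$
Hence $f$ is Smith with the same conformal factor. Equivalently, one can use the single-equation characterization of \cite{IK}. Since $d\pi$ is a linear isometry on each tangent space, we have $|df| = |d\widetilde{f}|$, so the identity $f^*\phi = (\frac{1}{\sqrt{k}}|df|)^k\vol_\Sigma$ transfers directly.

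\emph{Converse.} Because $\Gamma$ is discrete and acts freely and properly by isometries, $\pi \colon \tilde{X} \to X$ is a smooth covering map and a local diffeomorphism. Let $f \colon \Sigma \to X$ be Smith and suppose $\Sigma$ is simply connected. Manifolds are connected and locally path-connected, so the lifting criterion yields a continuous lift $\widetilde{f}$ with $\pi\circ\widetilde{f} = f$, and we may prescribe it at a basepoint.

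Next I would check that $\widetilde{f}$ is $C^1$. Locally, $\widetilde{f} = (\pi|_{\widetilde{U}})^{-1} \circ f$, where $\widetilde{U}$ is a sheet over an evenly covered neighborhood of $f(x)$; continuity of $\widetilde{f}$ lets us shrink to a neighborhood of $x$ mapped into $\widetilde{U}$. Since $(\pi|_{\widetilde{U}})^{-1}$ is a smooth isometry, $\widetilde{f}$ is $C^1$ there.

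Finally, I would verify the Smith equations pointwise. With $\sigma = (\pi|_{\widetilde{U}})^{-1}$, we have $\sigma^*g_{\tilde{X}} = g_X$ and $\sigma^*\widetilde{\phi} = \sigma^*\pi^*\phi = \phi$. Therefore
$$\widetilde{f}^*g_{\tilde{X}} = f^*\sigma^*g_{\tilde{X}} = f^*g_X = \lambda^2g_\Sigma, \qquad \widetilde{f}^*\widetilde{\phi} = f^*\phi = \lambda^k\vol_\Sigma.$$

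No step is a serious obstacle. The only points needing care are confirming that $\pi$ is a genuine covering map, which follows from the free and proper action of the discrete group $\Gamma$, and upgrading the continuous lift to $C^1$ via local inverses. The $\Gamma$-invariance of $\widetilde{\phi}$ enters only in the existence of $\phi$ with $\pi^*\phi=\widetilde{\phi}$, which is given.
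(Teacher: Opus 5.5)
Your proposal is correct and follows essentially the same route as the paper: the identities $f^*\phi = \widetilde{f}^*\widetilde{\phi}$ and $f^*g_X = \widetilde{f}^*g_{\tilde{X}}$ hold because $\pi$ is a local isometry with $\pi^*\phi = \widetilde{\phi}$, so $\pi\circ\widetilde{f}$ is Smith exactly when $\widetilde{f}$ is, and the converse follows from the covering-space lifting criterion since $\Sigma$ is simply connected. Your extra verification that the lift is $C^1$, via local inverses of $\pi$, is a detail the paper leaves implicit, and you handle it correctly.
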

\begin{proof}
Suppose that we have the following commutative diagram:
\[
\begin{tikzcd} & \tilde{X} \arrow{d}[swap]{\pi} \\
 \Sigma \arrow{r}[swap]{f} \arrow{ur}{\tilde{f}} & X 
\end{tikzcd}
\]
We claim that $f$ is Smith if and only if $\tilde{f}$ is Smith. To see this, from the two equalities
\begin{align*}
 f^{*} \phi = \tilde{f}^{*} \pi^{*} \phi= \tilde{f}^{*} \widetilde{\phi},
\end{align*}
and
\begin{align*}
g_X(df,df) = g_X (d\pi \circ d\tilde{f},d\pi \circ d\tilde{f}) = \pi^* g_X (d\tilde{f},d\tilde{f}) = g_{\tilde{X}}(d\tilde{f},d\tilde{f}),
\end{align*}
we immediately get that
$$ f^{*} \phi =\frac{g_X(df,df)^{\frac{k}{2}}}{(\sqrt{k})^k} \mathrm{vol}_{\Sigma} \, \iff \, \tilde{f}^{*} \widetilde{\phi} =\frac{g_{\tilde{X}}(d\tilde{f},d\tilde{f})^{\frac{k}{2}}}{(\sqrt{k})^k} \mathrm{vol}_\Sigma. $$
In particular, if $\tilde{f} \colon \Sigma \to \tilde{X}$ is Smith, then $f = \pi \circ \tilde{f} \colon \Sigma \to X$ is Smith.  Conversely, if $f \colon \Sigma \to X$ is a Smith immersion, then since $\Sigma$ is simply-connected, we have $f_{*}(\pi_1 (\Sigma)) = 0 \subset \pi_{*}(\pi_1(\tilde{X}))$, and thus there exists a lift $\tilde{f} \colon \Sigma \rightarrow \tilde{X}$, and this lift is necessarily also Smith.
\end{proof}

\begin{prop}[Hyperbolicity of discrete quotients] \label{prop:DiscreteQuot}
In the situation described above, the following results hold.
\begin{enumerate}[(a)] \setlength\itemsep{-0.2mm}
 \item ${\tilde{X}}$ is $\widetilde{\phi}$-hyperbolic $\, \iff \,$ $X$ is $\phi$-hyperbolic.
 \item ${\tilde{X}}$ is $R_{\widetilde{\phi}}$-hyperbolic  $\, \iff \,$  $X$ is $R_{\phi}$-hyperbolic.
 \item ${\tilde{X}}$ is $\widetilde{\phi}$-replete  $\, \iff \,$  $X$ is $\phi$-replete.
 \item Suppose that $\tilde{X}$ is $\widetilde{\phi}$-replete (or equivalently, by (c), that $X$ is $\phi$-replete). Then ${\tilde{X}}$ is $K_{\widetilde{\phi}}$-hyperbolic  $\, \iff \,$  $X$ is $K_{\phi}$-hyperbolic.
\end{enumerate}
\end{prop}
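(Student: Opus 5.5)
The whole argument rests on one identity: the KR metrics of $\tilde{X}$ and $X$ correspond under $d\pi$,
$$K_X(d\pi(\tilde v)) = K_{\tilde X}(\tilde v) \quad \text{for all } \tilde v \in T\tilde X.$$
I will derive this from Lemma~\ref{lem:Lifting}, taking $\Sigma = B^k$ (simply connected) with the Poincar\'e metric, and parts (b)--(d) will all follow from it.

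To prove the identity, let $\tilde v \in T_{\tilde p}\tilde X$ and $v = d\pi(\tilde v) \in T_pX$. First, if $\tilde f \colon B^k \to \tilde X$ is Smith with $\tilde f(0)=\tilde p$ and $d\tilde f_0(e_1)=\frac1a\tilde v$, then $\pi\circ\tilde f$ is Smith, sends $0$ to $p$, and satisfies $d(\pi\circ \tilde f)_0(e_1)=\frac1a v$. This gives $K_X(v)\le K_{\tilde X}(\tilde v)$. Conversely, let $f$ be Smith with $f(0)=p$ and $df_0(e_1)=\frac1a v$. Lemma~\ref{lem:Lifting} gives a Smith lift $\tilde f$. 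Composing with a deck transformation $\gamma\in\Gamma$ (an isometry preserving $\tilde\phi$, so $\gamma\circ\tilde f$ is still Smith), I can arrange $\tilde f(0)=\tilde p$. Since $d\pi$ is injective at $\tilde p$, this forces $d\tilde f_0(e_1)=\frac1a\tilde v$, which gives the reverse inequality. Moreover, $\pi$ is a local isometry, so $|d\pi(\tilde v)|=|\tilde v|$.

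For (a), if $\tilde f\colon\R^k\to\tilde X$ is a nonconstant Smith map, then $\pi\circ\tilde f$ is Smith. It is nonconstant because $d\pi$ is injective. Conversely, a nonconstant Smith $f\colon\R^k\to X$ lifts, since $\R^k$ is simply connected, and the lift is nonconstant.

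For (b), $R$-hyperbolicity is local. Given $\tilde p$, I choose a neighborhood $\tilde U$ on which $\pi$ is an isometry onto $U$. The identity then shows that $K_{\tilde X}\ge c|\cdot|$ on $T\tilde U$ if and only if $K_X\ge c|\cdot|$ on $TU$, and every point of $X$ is some $\pi(\tilde p)$.

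For (c), $d\pi\colon T\tilde X\to TX$ is a local homeomorphism, and in fact a covering map. The identity says $K_{\tilde X}=K_X\circ d\pi$, and upper semicontinuity is a local property, so it transfers in both directions.

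For (d), I expect the main obstacle. From the identity, $\tilde\gamma$ and $\pi\circ\tilde\gamma$ have the same $K$-length, and paths lift uniquely. Hence $d_\phi(p,q)=\inf\{d_{\tilde\phi}(\tilde p,\tilde q) : \tilde q\in\pi^{-1}(q)\}$ for any fixed $\tilde p \in \pi^{-1}(p)$.

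For the direction ``$X$ hyperbolic $\Rightarrow$ $\tilde X$ hyperbolic'', I will use Theorem~\ref{theorem:theoremC} rather than distances: $K_\phi$-hyperbolicity of $X$ gives $R_\phi$-hyperbolicity of $X$, then (b) gives $R_{\tilde\phi}$-hyperbolicity of $\tilde X$, and Theorem~\ref{thm:R-Implies-K} gives $K_{\tilde\phi}$-hyperbolicity. The converse runs the same chain in reverse. $K_{\tilde\phi}$-hyperbolicity of $\tilde X$ gives $R_{\tilde\phi}$-hyperbolicity by Theorem~\ref{theorem:theoremC}, since $\tilde X$ is replete. Then (b) gives $R_\phi$-hyperbolicity of $X$, and Theorem~\ref{thm:R-Implies-K} together with (c) gives $K_\phi$-hyperbolicity of $X$.

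This routing avoids the genuinely delicate point, namely that an infimum over infinitely many lifts $\tilde q$ of a point might be $0$. That is precisely why I route through Royden's theorem and repleteness instead of comparing $d_\phi$ and $d_{\tilde\phi}$ directly.
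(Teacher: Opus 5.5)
Your proof is correct. For (a)--(c) it follows the paper: everything rests on $K_{\tilde X}=K_X\circ d\pi$ obtained from Lemma~\ref{lem:Lifting}, and your deck-transformation step, which puts the lift's base point at $\tilde p$, makes explicit what the paper leaves to ``and vice-versa.'' For (d), however, you take a genuinely different route. The paper compares the pseudo-distances directly, using only Theorem~\ref{thm:main-claim}:
\begin{itemize}
\item for ``$\tilde X\Rightarrow X$'' it lifts short paths from $p$ to $q$ starting at a fixed $\tilde p$ and gets $d_{\widetilde{\phi}}(\tilde p,\pi^{-1}(q))=0$; since the fiber is closed in the manifold topology, it is $d_{\widetilde{\phi}}$-closed, so $\tilde p\in\pi^{-1}(q)$;
\item for ``$X\Rightarrow\tilde X$'' it uses $d_\phi(\pi\tilde p,\pi\tilde q)\le d_{\widetilde{\phi}}(\tilde p,\tilde q)$ to reduce to two points of one fiber, and separates them with a small $d_\phi$-ball about $p$.
\end{itemize}
So the ``infimum over infinitely many lifts'' that worried you is disposed of cheaply by the closedness of the fiber. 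Your route instead makes (d) a formal corollary of (b): you combine (1)$\Leftrightarrow$(2) of Theorem~\ref{theorem:theoremC} with Theorem~\ref{thm:R-Implies-K}, and (c) supplies repleteness on both sides. This is legitimate and non-circular, since Theorem~\ref{theorem:theoremC} is proved in \S\ref{sub:EquivHyp} without any reference to quotients. It is shorter and avoids the path-lifting and evenly-covered-neighborhood bookkeeping that the direct argument needs. The price is that it uses the hardest implication in the paper, (2)$\Rightarrow$(1), to prove what is essentially a soft topological statement; that implication runs through the Schwarz lemma (Theorem~\ref{lem:lemJKS}) and the mean value inequality (Theorem~\ref{thrm:mvt}). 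The paper's argument needs only Lemma~\ref{lemma:lemmaK} and Theorem~\ref{thm:main-claim}, so its version of (d) does not depend on those analytic estimates.
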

\begin{proof} ${}$
\begin{enumerate}[(a)]
\item Suppose first $X$ is $\phi$-hyperbolic. Take any Smith immersion $\tilde{f} \colon \mathbb{R}^k \rightarrow \tilde{X}$. Then $\pi \circ \tilde{f} \colon \mathbb{R}^k \to X$ is again a Smith immersion by Lemma \ref{lem:Lifting}. By hypothesis, $\pi \circ \tilde{f}$ is constant, so there exists $p\in X$ such that $(\pi \circ \tilde{f}) (x) = p$ for all $x \in \mathbb{R}^k$.  That is, $\tilde{f}(\mathbb{R}^k) \subseteq \pi^{-1}(p)$. But the latter is a discrete set, so the continuity of $\tilde{f}$ implies that $\tilde{f}$ is constant.  Thus, $\tilde{X}$ is $\widetilde{\phi}$-hyperbolic.

For the converse, suppose $\tilde{X}$ is $\widetilde{\phi}$-hyperbolic.  Let $f \colon \mathbb{R}^k \rightarrow X$ be a Smith immersion. By Lemma \ref{lem:Lifting}, there exists a Smith lift $\tilde{f} \colon \mathbb{R}^k \rightarrow \tilde{X}$. By hypothesis, $\tilde{f}$ is constant, and hence so is $f = \pi \circ \tilde{f}$, and thus $X$ is $\phi$-hyperbolic.
\item First, we establish a property which we will use in the remainder of the proof:
\begin{equation} \label{eq:eq6}
 K_{\tilde{X}}(v) = K_X(d\pi (v)), \text{ for all $v\in T\tilde{X}$.}
\end{equation}
This is clear since for any Smith immersion $\tilde{f} \colon B^k \rightarrow \tilde{X}$ having $\tilde{f}(0)=\tilde{p}$ and $d\tilde{f}_0(e_1)=\frac{1}{a}v$ for $a > 0$, Lemma \ref{lem:Lifting} yields a Smith immersion $f \colon B^k \rightarrow X$ having $f(0)=\pi(\tilde{p})$ and $df_0(e_1)=\frac{1}{a}d\pi_{\tilde{p}} (v)$, and vice-versa.

Since any vector $w \in TX$ is of the form $w = d\pi(v)$ for some $v \in T\tilde{X}$, and since $\pi$ is a local isometry, we immediately get that ${\tilde{X}}$ is $R_{\widetilde{\phi}}$-hyperbolic if and only if $X$ is $R_{\phi}$-hyperbolic.
\item Observe that~\eqref{eq:eq6} implies that $K_{\tilde{X}}$ is finite if and only if $K_X$ is finite. For upper semi-continuity, we first note that for any $y > 0$, we have
\begin{equation*}
 K_{\tilde{X}}^{-1}([-\infty,y)) = (d\pi)^{-1} K_{X}^{-1}([-\infty,y)).
\end{equation*}
Since $\pi$ is a local diffeomorphism, $d\pi$ is continuous, so the upper semi-continuity of $K_X$ yields upper semi-continuity of $K_{\tilde{X}}$. For the other direction, we note that
\begin{equation*}
 d\pi (K_{\tilde{X}}^{-1}([-\infty,y))) = K_{X}^{-1}([-\infty,y)).
\end{equation*}
Again, since $\pi$ is a local diffeomorphism, $d\pi$ is also a local diffeomorphism, and hence an open map, so the result follows.
\item For the first direction, assume $\tilde{X}$ is $K_{\widetilde{\phi}}$-hyperbolic, that is $d_{\widetilde{\phi}}$ is non-degenerate. Take any $p,q \in X$ such that $d_\phi(p,q)=0$. That means that for every $\varepsilon > 0$ there exists a curve $\gamma \colon [0,1] \rightarrow X$ from $p$ to $q$ with $\int_0^1 K_X(\gamma'(t)) < \varepsilon$. Fix $\tilde{p} \in \pi^{-1}(p)$. Then take the unique lift $\tilde{\gamma} \colon [0,1] \rightarrow \tilde{X}$ of $\gamma$ such that $\tilde{\gamma}(0)=\tilde{p}$ and set $\tilde{q} := \tilde{\gamma}(1) \in \pi^{-1}(q)$. Then using~\eqref{eq:eq6}, we get
\begin{align*}
 \int_0^1 K_{\tilde{X}}(\tilde{\gamma}^{'}(t)) dt= \int_0^1 K_{{X}}({\gamma}^{'}(t)) dt< \varepsilon,
\end{align*}
implying that $$d_{\widetilde{\phi}}(\tilde{p}, \pi^{-1}(q)) = 0. $$
But $\pi^{-1}(q)$ is closed with respect to the standard topology and hence by Theorem~\ref{thm:main-claim}, closed with respect to the $d_{\widetilde{\phi}}$-topology. This means that $\tilde{p} \in \pi^{-1}(q)$ implying that $p = \pi(\tilde{p}) = q$, concluding the first direction.

For the other direction, assume that $d_\phi$ is non-degenerate. Take any $\tilde{p}, \tilde{q} \in \tilde{X}$ such that $d_{\widetilde{\phi}}(\tilde{p},\tilde{q}) =0$. Let $p := \pi(\tilde{p})$, $q := \pi(\tilde{q})$. Take any curve $\tilde{\gamma}$ on $\tilde{X}$ between $\tilde{p}, \tilde{q}$. Then $\gamma := \pi\circ \tilde{\gamma}$ is a curve on $X$ between $p,q$. Then using~\eqref{eq:eq6} again we get
\begin{equation*}
 d_\phi(p,q) \leq \int_0^1 K_{{X}}({\gamma}^{'}(t))dt = \int_0^1 K_{\tilde{X}}(\tilde{\gamma}^{'}(t))dt.
\end{equation*}
Taking infima over all such curves $\tilde{\gamma}$, we conclude:
\begin{equation*}
 d_\phi(p,q) \leq d_{\widetilde{\phi}}(\tilde{p}, \tilde{q}).
\end{equation*}
By assumption $d_{\widetilde{\phi}}(\tilde{p}, \tilde{q}) = 0$ and hence, $ d_\phi(p,q) = 0$ which implies $p=q$. Thus, $\pi(\tilde{p}) = \pi(\tilde{q})$ meaning that $\tilde{p},\tilde{q} \in \pi^{-1}(p)$. It remains to show that $\tilde{p} = \tilde{q}$. Assume for contradiction that $\tilde{p} \neq {\tilde{q}}$. Since the fibers are discrete, there exists an open ball $B_{\varepsilon}(\tilde{p})$ of radius $\varepsilon$ around $\tilde{p}$ with respect to the standard topology such that $B_{\varepsilon}(\tilde{p}) \cap (\pi^{-1}(p) \setminus \{\tilde{p}\}) = \varnothing$. Since $\pi$ is a local diffeomorphism, it is an open map, so consider the open set $U := \pi(B_{\varepsilon}(\tilde{p}))$ in $X$. Clearly, $p \in U$. Since $d_\phi$ is non-degenerate, by Theorem~\ref{thm:main-claim}, the standard topology on $X$ coincides with the topology induced by $d_\phi$ and thus there exists an open ball $B_{\delta}(p) \subset U$ around $p$ of radius $\delta$ with respect to the $d_\phi$ topology in $X$. Now, every curve $\tilde{\gamma}$ has to pass through the boundary of $B_{\varepsilon}(\tilde{p})$, which implies that $\gamma = \pi \circ \tilde{\gamma}$ has to pass through the boundary of $U$ and hence through the boundary of $B_{\delta}(p)$. We deduce that
\begin{equation*}
 \int_0^1 K_{\tilde{X}}(\tilde{\gamma}^{'}(t)) dt = \int_0^1 K_{{X}}(\gamma'(t)) dt \geq d_\phi(p,\partial B_{\delta}(p)) = \delta.
\end{equation*}
Taking infima over all curves $\tilde{\gamma}$, we get
\begin{equation*}
d_{\widetilde{\phi}}(\tilde{p},\tilde{q}) \geq \delta,
\end{equation*}
where $\delta > 0$ is a fixed constant, which is our desired contradiction. \qedhere
\end{enumerate}
\end{proof}

\begin{example}
By means of Proposition \ref{prop:DiscreteQuot}(c), building on~\cite[Example 6.7]{BIM}, we find the following non-K\"{a}hler examples of \emph{compact} replete calibrated manifolds.
\begin{itemize}
\item Let $\widetilde{\phi} \in \Omega^k(\R^n)$ be a constant-coefficient elliptic calibration.  Then $T^n = \R^n/\Z^n$ is $\phi$-replete, where $\phi$ is the calibration on $T^n$ having $\pi^*\phi = \widetilde{\phi}$ for the projection $\pi \colon \R^n \to \R^n/\Z^n = T^n$.  Note that $T^n$ is not $\phi$-hyperbolic.
\item The compact quaternionic-K\"{a}hler manifold $X = \mathbb{HH}^n/\Gamma$ is compact and $\Psi$-replete, where $\Gamma \leq \mathrm{Sp}(n,1)$ is a cocompact lattice, and $\Psi \in \Omega^4(X)$ is the QK calibration.  Note that $X$ is $R_\Psi$-hyperbolic (and hence $\Psi$-hyperbolic).
\end{itemize}
\end{example}

\subsection{Tautness in calibrated geometry} \label{sec:Tautness}

\indent \indent As discussed in $\S$\ref{sub:MainResults}, a complex manifold is \emph{taut} if $\mathrm{Hol}(B^2, X)$ is a normal family.  The term ``taut" is due to Wu~\cite{Wu}, who was among the first to consider normal families of holomorphic maps in higher dimensions.  By analogy, we make the following definition.

\begin{defn}
A calibrated manifold $(X, g, \phi)$ is called \emph{$\phi$-taut} if $\mathrm{SmIm}(B^k, X)$ is a normal family.
\end{defn}

\begin{example} If $(X, g_X, \omega)$ is a K\"{a}hler manifold, then $\mathrm{SmIm}(B^2, X) = \mathrm{Hol}(B^2, X)$. Consequently, $X$ is $\omega$-taut if and only if  $X$ is taut as a complex manifold.
\end{example}

\indent For complex manifolds, Kiernan~\cite{Kiernan} proved that tautness is an intermediate notion between Kobayashi hyperbolicity and complete Kobayashi hyperbolicity.  As we now show, the same is true in the calibrated setting as well.

\begin{thm}[Kiernan-type theorem] \label{theorem:Royden}
Let $(X, g, \phi)$ be a calibrated manifold.
\begin{enumerate}[(a)]\setlength\itemsep{-0.2mm}
\item If $X$ is $\phi$-replete, $K_{\phi}$-hyperbolic and $(X, d_{\phi})$ is complete, then $X$ is $\phi$-taut.
\item If $X$ is $\phi$-taut, then  $\mathrm{SmIm}(B^k,X)$ is an even family, and hence $X$ is $R_\phi$-hyperbolic.
\end{enumerate}
\end{thm}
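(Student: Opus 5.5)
Part (b) is quick. If $X$ is $\phi$-taut, then $\mathrm{SmIm}(B^k,X)$ is a normal family, so Proposition~\ref{prop:Equi-Normal}(c) shows it is an even family. The implication (5) $\implies$ (1) in Theorem~\ref{theorem:theoremC} then gives $R_\phi$-hyperbolicity. By Remark~\ref{rmk:NoRepleteness}, that implication does not use $\phi$-repleteness, so no repleteness hypothesis is needed in (b).

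For part (a), I would use Proposition~\ref{prop:Equi-Normal}(b) with the metric $d_X := d_\phi$. This requires three things.
\begin{enumerate}[(i)]
\item $d_\phi$ is a topology-compatible metric. This follows from $K_\phi$-hyperbolicity and repleteness via Theorem~\ref{thm:main-claim}.
\item $\mathrm{SmIm}(B^k,X)$ is pointwise equicontinuous with respect to $d_\phi$. This is the implication (3) $\implies$ (4) of Theorem~\ref{theorem:theoremC}, coming from the distance-decreasing property $d_\phi(f(p),f(q)) \leq \mathrm{dist}_1(p,q)$ of \cite[Corollary 6.17]{BIM}.
\item Every closed, $d_\phi$-bounded subset of $X$ is compact.
\end{enumerate}
Once (i)--(iii) hold, Proposition~\ref{prop:Equi-Normal}(b) gives normality, that is, $\phi$-tautness.

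The main obstacle is (iii): deducing Heine--Borel from completeness of $(X,d_\phi)$. Completeness alone does not imply this in a general metric space; a Hopf--Rinow type statement needs a length structure. I would argue as follows. First, $d_\phi$ is an intrinsic (length) metric, since it is defined as an infimum of lengths of curves, and every curve length is at least the $d_\phi$-distance between its endpoints. Second, $(X,d_\phi)$ is locally compact, because its topology is the manifold topology. I would then invoke the Hopf--Rinow--Cohn-Vossen theorem: a complete, locally compact length space has compact closed bounded sets.

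To make this step self-contained, I would imitate the classical proof. Fix $x$ and let $\rho = \sup\{r : \overline{B}^{d_\phi}_r(x) \text{ is compact}\}$, which is positive by local compactness. Suppose $\rho < \infty$. Using the length-space property, any point of $\overline{B}_\rho$ is a limit of points in the balls $\overline{B}_{\rho-\epsilon}$. Each of these balls is covered by a finite $\epsilon$-net, so $\overline{B}_\rho$ is totally bounded; being closed in a complete space, it is compact. A finite cover of $\overline{B}_\rho$ by relatively compact neighborhoods then extends compactness to $\overline{B}_{\rho+\eta}$ for some $\eta>0$, again using the length structure. This contradicts the maximality of $\rho$, so $\rho=\infty$.

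The one delicate point is that $d_\phi$ is an infimum over piecewise-smooth curves with an upper semicontinuous integrand, so I must check the approximate-midpoint property directly from the definition. I expect this to be routine: split a near-optimal curve at the parameter where the $K_X$-length reaches half the total.
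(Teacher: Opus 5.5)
Your proposal is correct and follows the paper's proof essentially step for step. Part (b) goes through Proposition~\ref{prop:Equi-Normal}(c) and Remark~\ref{rmk:NoRepleteness}. Part (a) goes through Theorem~\ref{thm:main-claim}, pointwise equicontinuity from the distance-decreasing property, Hopf--Rinow for locally compact length spaces, and Proposition~\ref{prop:Equi-Normal}(b). The one difference is that you sketch the Hopf--Rinow--Cohn-Vossen argument and check the length-space property of $d_\phi$, whereas the paper simply cites them.
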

\begin{proof} ${}$
\begin{enumerate}[(a)] \setlength\itemsep{-0.2mm}
\item Suppose $X$ is $\phi$-replete, $K_\phi$-hyperbolic, and $(X, d_\phi)$ is complete. By Theorem ~\ref{thm:main-claim}, the distance function $d_\phi$ induces the topology of $X$.  Since $(X, d_\phi)$ is a length metric space, the Hopf-Rinow theorem for locally compact length metric spaces implies that every closed, $d_\phi$-bounded subset of $X$ is compact. By the implication (2)  $\implies$ (4) of Theorem \ref{theorem:theoremC}, we have that $\mathrm{SmIm}(B^k, X)$ is pointwise equicontinuous with respect to $d_\phi$.  Therefore, Proposition \ref{prop:Equi-Normal}(b) implies that $\mathrm{SmIm}(B^k, X)$ is a normal family, so $X$ is $\phi$-taut.

\item If $X$ is $\phi$-taut, then $\mathrm{SmIm}(B^k,X)$ is a normal family, so the result follows directly from Proposition~\ref{prop:Equi-Normal}(c) and Remark \ref{rmk:NoRepleteness}. \qedhere
\end{enumerate}
\end{proof}

\begin{cor}
Let $(X, g, \phi)$ be a compact, $\phi$-replete calibrated manifold.  Then conditions (1) through (6) in Theorem \ref{theorem:theoremC} are equivalent to:
\begin{enumerate}\setlength\itemsep{-0.5mm}
 \item[\emph{(7)}] $X$ is $\phi$-taut.
\end{enumerate}
\end{cor}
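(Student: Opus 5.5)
We need to show that, for compact $\phi$-replete $X$, $\phi$-tautness is equivalent to conditions (1)--(6) of Theorem \ref{theorem:theoremC}. Since those six are already mutually equivalent, it suffices to link (7) to any one of them in each direction. Both directions follow from the point-set results of Section \ref{sub:SpacesCts}, with compactness of $X$ removing the distinction between normality and pre-compactness.

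For (6) $\implies$ (7), we use Proposition \ref{prop:Equi-Normal}(a): a pre-compact family in $C(B^k,X)$ is normal. Alternatively, Remark \ref{rmk:CompactEquivalence} says that for compact $X$, pre-compactness of $\mathcal{F}=\mathrm{SmIm}(B^k,X)$ is equivalent to normality. Either way, $\mathrm{SmIm}(B^k,X)$ is normal, so $X$ is $\phi$-taut.

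For (7) $\implies$ (1), we apply Theorem \ref{theorem:Royden}(b). If $X$ is $\phi$-taut, then $\mathrm{SmIm}(B^k,X)$ is an even family, which is condition (5), and hence $X$ is $R_\phi$-hyperbolic. This direction uses neither compactness nor repleteness. One could also close the loop within the compact setting: normality implies pre-compactness by the converse in Proposition \ref{prop:Equi-Normal}(a), which is condition (6).

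There is no real obstacle here. The only point to check is that compactness of $X$ rules out compactly divergent subsequences: taking $L=X$ and $K=\{0\}$, every $f_m(K)$ meets $L$, so normality forces convergent subsequences, which is exactly pre-compactness. With this, the chain (1)$\Leftrightarrow\cdots\Leftrightarrow$(6)$\Leftrightarrow$(7) is complete.
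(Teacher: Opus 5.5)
Your proof is correct and matches the argument the paper leaves implicit: (6) $\implies$ (7) comes from Proposition~\ref{prop:Equi-Normal}(a) (equivalently Remark~\ref{rmk:CompactEquivalence}), and (7) $\implies$ (5) $\implies$ (1) comes from Theorem~\ref{theorem:Royden}(b). Your remark that compactness of $X$ rules out compactly divergent sequences is exactly the content of the converse in Proposition~\ref{prop:Equi-Normal}(a).
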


\bibliographystyle{plain}
\bibliography{hyperbolicity-II.bib}

\Addresses

\end{document}